\documentclass[11pt]{amsart}
\usepackage[T1]{fontenc}

\usepackage{amsmath,amsfonts,amssymb,amsthm,color,dsfont,amsbsy}
\usepackage{mathrsfs}
\usepackage[mathscr]{eucal}
\usepackage{enumitem}

\setlist[itemize]{labelindent=\parindent, leftmargin=*}
\setlist[enumerate]{labelindent=\parindent, leftmargin=*}
\usepackage{geometry}
\usepackage[hyperindex,breaklinks]{hyperref}
\hypersetup{
  colorlinks = true,
  allcolors = blue,
}
\usepackage{mathtools, bm, graphicx}

\usepackage{mlmodern}
\usepackage{caption}
\usepackage[export]{adjustbox} 
\newtheorem{thm}{Theorem}[section]
\newtheorem{cor}[thm]{Corollary}
\newtheorem{prop}[thm]{Proposition}
\newtheorem{proposition}[thm]{Proposition}
\newtheorem{lemma}[thm]{Lemma}
\theoremstyle{definition}
\newtheorem{exmp}[thm]{Example}
\newtheorem{definition}[thm]{Definition}

\newtheorem{remark}[thm]{Remark}
\numberwithin{equation}{section}

\usepackage{tikz}
\usetikzlibrary{cd, calc, knots}
\usetikzlibrary{shapes.geometric}
\usetikzlibrary{decorations, decorations.markings} 
\usetikzlibrary{arrows, arrows.meta}
\tikzset{
partial ellipse/.style args={#1:#2:#3}{
    insert path={+ (#1:#3) arc (#1:#2:#3)}}
}
\tikzset{
->-/.style={decoration={
markings,
mark=at position #1 with {\arrow{>}}},postaction={decorate}}
}
\tikzset{
-<-/.style={decoration={
markings,
mark=at position #1 with {\arrow{<}}},postaction={decorate}}
}
\tikzset{
-<<-/.style={decoration={
markings,
mark=at position #1 with {\arrow{-latex}}},postaction={decorate}}
}
\tikzcdset{
  tikzcd to/.tip={cm to[width=4pt,length=5.6pt]}
}
\tikzset{
  vtx/.style={circle,fill,inner sep=2pt}
}

\def\bd{{\partial}}
\def\mtr{{\mathbf{t}}} 
\newcommand{\Proj}{\operatorname{Proj}}
\def\adm{\operatorname{adm}}
\def\bi{\operatorname{bi}}
\def\rb{\operatorname{RB}}
\def\cgpt{\mathscr{F}}
\def\cut{{\operatorname{cut}}}
\def\Cap{{\operatorname{cap}}}
\def\nb{\mathfrak{N}}
\def\ourBord{\mathbf{Bord}^{\operatorname{nc}}_{\operatorname{oc}}}
\newcommand{\bdf}[1]{{\bd^{\operatorname{fr}}\hspace{-1pt}{#1}}}
\newcommand{\bdg}[1]{{\bd^{\operatorname{gl}}\hspace{-1pt}{#1}}}
\def\vphi{\varphi}
\def\span{\operatorname{span}}
\newcommand{\pair}[1]{\langle #1 \rangle}
\def\ourV{\cV}
\def\tbv{\widetilde{\ourV}}
\def\sk{\cS}
\def\Int{\operatorname{Int}}
\def\MCG{\operatorname{Mod}}

\newcommand{\bbSigma}{%
  \mathord{
    \tikz[baseline=(char.base)]{
      \node[inner sep=0pt] (char) {$\Sigma$}; 
\begin{scope}[xshift=0.5]
      \draw[line width=0.3pt] (-0.05,0.115)--(0.03,0.002);
      \draw[line width=0.3pt] (-0.06,-0.115)--(0.035,0.004);
\end{scope}
    }
  }
}
\newcommand\scalemath[2]{\scalebox{#1}{\mbox{\ensuremath{\displaystyle #2}}}}
\newcommand{\smbbSigma}{\scalemath{0.7}{\bbSigma}}

\def\intr{{\operatorname{Int}}}

\newcommand\void[1]{}

\newcommand{\threeCell}[1]{{#1}^{(3)}}
\newcommand{\twoCell}[1]{{#1}^{(2)}}
\newcommand{\oneCell}[1]{{#1}^{(1)}}
\newcommand{\zeroCell}[1]{{#1}^{(0)}}
\newcommand{\ourGraph}{\fG}

\newcommand{\spnTree}{\fT}
\newcommand{\loopEdge}{{\de_{2}\setminus\spnTree}}
\newcommand{\belt}{\operatorname{blt}}
\newcommand{\clsr}{\operatorname{cl}}
\newcommand\arxiv[2]      {\href{http://arXiv.org/abs/#1}{arXiv:#2}}
\newcommand\doi[2]        {\href{http://dx.doi.org/#1}{#2}}
\newcommand{\etalchar}[1] {$^{#1}$}

\def\cC{\mathcal{C}}

\def\cK{\mathcal{K}}

\def\cN{\mathcal{N}}

\def\cS{\mathcal{S}}

\def\cV{\mathcal{V}}

\def\cZ{\mathcal{Z}}

\def\bB{{\mathbb{B}}}

\def\bD{{\mathbb{D}}}

\def\bM{{\mathbb{M}}}
\def\bN{{\mathbb{N}}}

\def\bR{{\mathbb{R}}}
\def\bS{{\mathbb{S}}}

\def\kk{{\bm{k}}}

\def\Ld{\Lambda}

\def\de{\delta}

\def\fA{\mathfrak{A}}

\def\fG{\mathfrak{G}}

\def\fH{\mathfrak{H}}

\def\fT{{\mathfrak{T}}}

\def\1{{\mathds{1}}}
\def\to{\rightarrow}

\def\Hom{{\operatorname{Hom}}}

\def\End{{\operatorname{End}}}

\def\Rad{{\operatorname{rad}}}

\def\rad{\Rad}

\newcommand\Irr{{\operatorname{Irr}}}
\def\irr{{\operatorname{Irr}}}
\def\id{{\operatorname{id}}}

\def\Vect{{\mathbf{Vect}}}

\def\ev{{\operatorname{ev}}}
\def\coev{{\operatorname{coev}}}

\usepackage{diagbox}

\newcommand{\ot}{\otimes}

\title{Non-semisimple open-closed 3d TFT}
\author{Ingo Runkel}
\address{Ingo Runkel, Fachbereich Mathematik, Universität Hamburg, Bundesstraße 55, 20146 Hamburg, Germany}
\email{ingo.runkel@uni-hamburg.de}

\author{Yilong Wang}
\address{Yilong Wang, Beijing Institute of Mathematical Sciences and Applications (BIMSA), Beijing, China}
\email{wyl@bimsa.cn}

\begin{document}

\begin{abstract}
Given a spherical finite tensor category $\cC$, not necessarily semisimple, and a two-sided modified trace on its projective ideal, we define an open-closed three-dimensional topological field theory with values in vector spaces. 

The bordism category has as morphisms three-dimensional bordisms with corners, whose boundary is partitioned into the gluing boundary, pa\-ra\-me\-trised by source and target surface, and the unparametrised free boundary. The free boundary is equipped with an embedded $\cC$-coloured graph satisfying an admissibility condition.

Our construction starts from a new three-manifold invariant based on the multi-han\-dle\-bo\-dy invariant of \cite{CGPT20-Kup} and the chromatic maps of \cite{CGPV23}.
The open-closed topological field theory is then obtained via the universal construction.
\end{abstract}

\maketitle

\tableofcontents

\thispagestyle{empty}

\newpage

\section{Introduction}
State-sum constructions of three-dimensional topological field theories (TFTs) were first given by Turaev-Viro \cite{TV92} and Barrett-Westbury \cite{BW95}. State sums are naturally understood in terms of open-closed TFTs which assign vector spaces to 2-manifolds with boundary and linear maps to bordisms with corners, see for example the alterfold theory \cite{atfd1, atfd2, atfd3}, as well as \cite{SS25}. The constructions mentioned so far all require semisimple algebraic input. 

Recently, there has been a vast development in the construction of TFTs based on non-semisimple algebraic inputs, with the first major step forward being the invariants in \cite{CGP14} and the corresponding TFT in \cite{BCGP16}. Further examples of non-semisimple 3d TFTs include the one constructed from modular tensor categories based on the surgery presentation of 3-manifolds \cite{DeRenzi2022}, as well as the one defined in \cite{CGPV23} from chromatic categories (generalization of spherical fusion categories) via admissible skein modules and the generators-and-relations presentation of bordism categories in \cite{Juh18}. 

\vspace{0.8em}

In this paper, we construct a non-semisimple open-closed TFT, whose algebraic input is a triple $(\cC, \mtr, \pi_{\1})$ where $\cC$ is a spherical finite tensor category, $\mtr$ is a non-zero modified trace on $\Proj_{\cC}$ and $\pi_{\1}: P_{\1} \to \1$ is a surjection which just serves as normalisation. 
Our construction crucially uses chromatic maps and their properties from \cite{CGPV23}, as well as the corresponding invariant $\cgpt$ of admissible bichrome graphs in multi-handlebodies defined in \cite{CGPT20-Kup}. 
The basic idea is as follows: We first extend the multi-handlebody invariant $\cgpt$ to an invariant $\tau_{\cC}$ of general 3-manifolds with boundary, together with a $\cC$-coloured bulk-admissible graph embedded in the boundary. Bulk-admissibility means that each connected component of the 3-manifold has a projectively labelled edge somewhere in its boundary (Definition \ref{def:adm-1}). In particular, a connected 3-manifold is not allowed to have empty boundary.
We then use the universal construction \cite{BHMV2} to define a TFT. 

We briefly outline the definition of the invariant $\tau_{\cC}(M, \Gamma)$, where $M$ is a 3-manifold with boundary and $\Gamma$ is a bulk-admissible $\cC$-graph in $\bd M$. We choose a PLCW decomposition of $M$ and push $\Gamma$ to the boundary of a tubular neighbourhood of the 1-skeleton, resulting in a multi-handlebody $\fH$ with an admissible $\cC$-graph on its boundary. Next, we construct a graph using the 2- and 3-cells (Definition \ref{def:23-graph}), and choose a spanning tree of the graph. Then, we add red loops to $\fH$ to record how 2-cells are attached to the 1-skeleton of $M$, where we only add red loops for 2-cells that do not correspond to the edges in the spanning tree. In this way, 
$\fH$ is decorated with an admissible bichrome (red and blue) graph $\Gamma'$ on its boundary surface, and we define $\tau_{\cC}(M, \Gamma)$ to be the multi-handlebody
invariant $\cgpt(\fH, \Gamma')$. 

The first main result of this paper is that $\tau_{\cC}(M, \Gamma)$ does not depend on the choices of the PLCW decomposition, the spanning tree, and how $\Gamma$ is pushed to the boundary of the handlebody $\fH$. Namely, we have

\begin{thm}[Theorem \ref{thm:tau-well-def}]
$\tau_{\cC}(M, \Gamma)$ is well-defined and only depends on the diffeomorphism class of $(M, \Gamma)$.
\end{thm}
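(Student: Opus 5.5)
The proof splits into three independence statements, followed by a naturality statement. The choices are, in order: how $\Gamma$ is pushed onto $\bd\fH$, the spanning tree $\spnTree$ of the graph built from 2- and 3-cells, and the PLCW decomposition. In each case I will reduce the comparison to local moves under which $\cgpt$ is known to be invariant. The available tools are: invariance of $\cgpt$ under isotopy of admissible bichrome graphs in the multi-handlebody; the handle-slide property of red loops coloured by the chromatic map of \cite{CGPV23}; and the compatibility of $\cgpt$ with adding or removing a 1-handle together with a red meridian-type loop, or with a blue projective edge. These are the facts that make $\cgpt$ an invariant of Kuperberg type in \cite{CGPT20-Kup}.

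\emph{Step 1: pushing $\Gamma$.} Any two pushings of $\Gamma$ from $\bd M$ onto $\bd\fH$ differ by an isotopy of $\bd M$ carrying $\Gamma$ across 2-cells of the boundary. In $\fH$ this becomes an isotopy of blue strands on $\bd\fH$, together with passing blue strands across the attaching discs of 2-cells. A 2-cell that is not in the tree carries a red loop. Sliding a blue strand across it is exactly the chromatic-map identity: a red loop encircling an admissible bundle of strands can be slid over any other strand. Bulk-admissibility provides the projective edge that makes the chromatic map applicable. A 2-cell that is in the tree carries no red loop, so a separate argument is needed there. I will first change the tree so that this 2-cell is not in it, which is Step 2, and then apply the sliding argument.

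\emph{Step 2: the spanning tree.} Any two spanning trees are related by elementary exchanges: add an edge $e$ and remove another edge $e'$ on the cycle this creates. Geometrically, the cycle is a chain of 3-cells whose shared faces, together with $e$, bound a 3-chain. The red loop of $e$ is then a band sum of the red loops of the other faces in the cycle, up to isotopy in $\fH$. Handle-sliding the red loop of $e'$ over the others, using the chromatic slide property, turns it into the red loop of $e$. This shows the two decorated multi-handlebodies give the same value of $\cgpt$. I will also check that the choice of root (or the exterior vertex) is harmless.

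\emph{Step 3: the PLCW decomposition.} Two PLCW decompositions of $M$ are related by elementary subdivisions and their inverses, each of which adds one cell of dimension 1, 2 or 3 that splits an existing cell. I will analyse each type separately.
\begin{itemize}
\item Splitting a 1-cell by a 0-cell changes $\fH$ only up to diffeomorphism.
\item Adding a 1-cell that splits a 2-cell adds a 1-handle. The 2-cell's red loop becomes two red loops, one of which can be removed by putting the new edge into the tree. The remaining configuration is then a 1-handle cancelled by a red loop, and $\cgpt$ is invariant under this cancellation by the normalisation using $\pi_{\1}$ and $\mtr$.
\item Adding a 2-cell that splits a 3-cell adds a vertex and an edge to the 2-3 graph. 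The spanning tree extends to include the new edge, so no new red loop appears.
\end{itemize}
Interior versus boundary cells must be treated separately, with the boundary graph kept in general position.

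\emph{Step 4: naturality.} A diffeomorphism $(M,\Gamma)\to(M',\Gamma')$ transports all choices, so $\tau_{\cC}$ is an invariant of the diffeomorphism class of $(M,\Gamma)$.

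I expect Step 3, in particular the 1-cell and 2-cell moves, to be the main obstacle. These moves change the topology of the multi-handlebody, and one must show that the local contribution of a red loop cancelling a new 1-handle is exactly $1$. This is where the chromatic map's defining property of \cite{CGPV23} and the precise normalisation of $\cgpt$ must be matched carefully.
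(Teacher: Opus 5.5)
Your overall plan (independence of the pushing/$Q$, of $\spnTree$, and of $\de$ via Kirillov's elementary subdivisions, then diffeomorphism invariance of $\cgpt^{\mathrm{bi}}$) is the paper's. Your Step~3 follows Lemma~\ref{lem:de-indep}, although in the 2-cell case it is simpler to keep the new edge out of the tree, as the paper does, and cap its red loop against the new 1-handle. The genuine gap is Step~2: its geometric mechanism is wrong. Write $e\notin\spnTree$ for the added edge and $e'\in\spnTree$ for the removed one. All faces on the fundamental cycle of $e$ other than $e$ are edges of $\spnTree$. So they carry \emph{no} red loops, and there is nothing to slide over. Moreover, the faces along a cycle are not in general the boundary of anything: the boundary of a union of 3-cells consists of the faces \emph{leaving} that set of vertices. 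Relations among belts on $\bd\nb(M^{(1)}_\de)$ come from 3-cells, since $\bd X^{(3)}$ minus the thickened 1-skeleton is a planar surface bounded by belts of faces of $X^{(3)}$. So what you need is a \emph{cut} of $\ourGraph_\de(M)$, not a cycle.

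This is what Lemma~\ref{lem:T-indep} does, using a subtree $B$ that avoids $v_0$. In your notation, take $B$ to be the component of $\spnTree\setminus\{e'\}$ not containing $v_0$. Then $\bd\nb(B)\cap\bd\nb(M^{(1)}_\de)$ is a planar surface whose boundary circles are parallel to belts of edges incident to $B$ but not in $B$. The only tree edge among these is $e'$; all the others carry red loops. The edge $e$ occurs among them because its fundamental cycle passes through $e'$. Attaching 2-handles along all red loops except the one on $L_e$ turns this surface into an annulus from $L_e$ to $L_{e'}$, i.e.\ the red loop slides from $L_e$ to $L_{e'}$. In particular $v_0$ is not ``harmless'': $B$ must avoid it. The reason is that $v_0$ is not a 3-cell, and the surface it would contribute is essentially $\bd M$, which is in general neither connected nor planar.

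Step~1 also fails as stated when the boundary 2-cell $W$ lies in the tree, because you cannot always move $W$ out of $\spnTree$. Its edge is a bridge of $\ourGraph_\de(M)$ exactly when $\bd M$ is connected and carries a single 2-cell; then $v_0$ has valence one and every spanning tree contains that edge. This already happens for a solid torus whose boundary is a single square. Lemma~\ref{lem:Q-indep} treats this case directly with the same cut idea. Take $B$ to be the component of $\spnTree\setminus\{e\}$ away from $v_0$, together with the half-edge of $e$, and cap all red loops. The boundary of the complement of $B$ then gives a disc bounded by a longitude of $\nb(\bd W)$, across which the blue strands are isotoped. Alternatively you could first subdivide $W$ so that it is no longer a bridge, but then Steps~1--3 must be ordered to avoid circularity.

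Finally, the cancellation you single out as the main obstacle in Step~3 needs no new normalisation check. It is the red capping move, Lemma~\ref{lem:hb-cap}, which follows from cutting-move invariance and the chromatic identity~\eqref{eq:chromatic-def}, with no role for $\pi_\1$. The only care needed is that the red loop crosses the new 1-handle exactly once, and that $\Gamma$ is pulled off it when the new 1-cell lies on $\bd M$.
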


The key novelty of our construction is the use of spanning trees when adding red loops. Based on existing state-sum constructions of 3-manifold invariants and TFTs, it might be tempting to add red loops for every 2-cell: Indeed, in the semisimple case, this is typical (see for example \cite{atfd1} and \cite{SS25}); 
and in the non-semisimple case, this is closely related what the authors of \cite{CGPT20-Kup} did to define an invariant of closed 3-manifolds. However, in the presence of boundaries, such a construction would often create red loops bounding disks after handle-slides/sliding moves, and such red loops evaluate to zero in many non-semisimple cases  (Remark~\ref{rem:red-loop-zero}), 
but also in some semisimple examples (Remark~\ref{rem:ssi-chromatic}). 
In our construction we avoid such obviously unwanted zeros by using spanning trees, resulting in not only fewer red loops, but also less room for the red loops to slide (cf.~the proof of Lemma \ref{lem:T-indep}).

One can define an invariant of closed connected 3-manifolds $M$ by evaluating $\tau_\cC$ on $M$ minus a 3-ball with a suitable admissible graph $I$ on the boundary $\bS^2$ (see Section~\ref{sec:CGPT-compare} for details).
In Proposition~\ref{prop:compare-closed} we show that $\tau_\cC(M \setminus \bB^3,I)$ agrees with the invariant $\mathcal{K}_\cC(M)$ of closed connected 3-manifolds in \cite{CGPT20-Kup}.
    
\medskip
After defining the invariant $\tau_{\cC}$, we use the universal construction to construct a non-compact open-closed TFT on the bordism category of surfaces whose boundaries contain $\cC$-marked points (called $\bd$-marked surfaces), and 3-dimensional bordisms between them: 

\begin{thm}[Theorem \ref{thm:ocTFT}]
Via the universal construction, the invariant $\tau_{\cC}$ extends to an open-closed topological field theory $\ourV: \ourBord(\cC) \to \Vect_{\kk}$.     
\end{thm}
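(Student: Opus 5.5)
The plan is to follow the standard universal construction of Blanchet--Habegger--Masbaum--Vogel. For this, $\tau_{\cC}$ first has to be viewed as a functor-like invariant on the bordism category $\ourBord(\cC)$.

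\textbf{Step 1: the invariant on bordisms with empty source.} Let $\Sigma$ be a $\bd$-marked surface. We consider bordisms $M\colon \emptyset \to \Sigma$ whose free boundary carries a $\cC$-graph compatible with the marked points of $\bd\Sigma$. Gluing such an $M$ to $M'\colon \Sigma\to\emptyset$ along $\Sigma$ produces a 3-manifold with boundary equal to the union of the free boundaries. It carries the glued graph $\Gamma\cup\Gamma'$, which is closed because the marked points match. We set $\langle M', M\rangle := \tau_{\cC}(M'\cup_\Sigma M, \Gamma\cup\Gamma')$ whenever the result is bulk-admissible, and declare it zero or disallowed otherwise. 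This matches the admissibility condition built into the morphisms of $\ourBord(\cC)$.

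For this to make sense, bordisms must be declared admissible exactly when every connected component of every composite has a projectively coloured edge. Then any composite of admissible bordisms is again admissible, and Theorem \ref{thm:tau-well-def} gives the required diffeomorphism invariance.

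\textbf{Step 2: the state spaces.} Define $\tbv(\Sigma)$ as the free vector space on admissible bordisms $\emptyset\to\Sigma$, and $\tbv'(\Sigma)$ analogously for bordisms $\Sigma\to\emptyset$. Let $\ourV(\Sigma)$ be the quotient of $\tbv(\Sigma)$ by the right radical of the pairing $\langle-,-\rangle$. A bordism $N\colon\Sigma\to\Sigma'$ acts on generators by $M\mapsto N\circ M$, provided the composite is admissible. This action preserves the radical, since $\langle M', N\circ M\rangle=\langle M'\circ N, M\rangle$ by associativity of gluing and invariance of $\tau_{\cC}$. Hence it descends to a map $\ourV(N)$. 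Functoriality follows from associativity of gluing, and identity cylinders act as identities.

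\textbf{Step 3: finite-dimensionality.} This is the main obstacle. One must show that $\ourV(\Sigma)$ is finite-dimensional. The plan is to exhibit a finite spanning set, using the construction of $\tau_{\cC}$ via multi-handlebodies.

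First, any admissible $M\colon\emptyset\to\Sigma$ is equivalent modulo the radical to a fixed handlebody-type bordism $H_\Sigma$ decorated with an admissible bichrome graph. This follows because, after choosing a PLCW decomposition, $\tau_{\cC}(M'\cup M)$ is computed as $\cgpt$ of a multi-handlebody with red loops. These red loops can be absorbed on the $M$ side as fixed chromatic insertions, independent of $M'$.

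Next, skein relations for $\cgpt$ on the boundary of the handlebody allow all colours to be fused into finitely many projective colours, for instance by a projective generator through a fixed graph skeleton. The graph along a spine is then bounded by $\Hom$-spaces of $\cC$, which are finite-dimensional. So $\ourV(\Sigma)$ is a quotient of finitely many finite-dimensional $\Hom$-spaces. In practice I would try to prove this by showing that $\tbv(\Sigma)/\text{rad}$ injects into an admissible skein module in the sense of \cite{CGPV23} for a handlebody, which is known to be finite-dimensional.

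\textbf{Step 4: the open-closed structure.} Finally, check that the disjoint-union structure is compatible. This yields a lax monoidal, open-closed TFT $\ourV$; monoidality is not required for the statement.
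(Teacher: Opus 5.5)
Your proposal has two genuine gaps, and they sit exactly where the paper does its real work.

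First, Step~4 is wrong to set monoidality aside. By Definition~\ref{def:Bord-C} an open-closed TFT is a \emph{symmetric monoidal} functor, and that is what Theorem~\ref{thm:ocTFT} asserts. In the universal construction only half of this is formal. The comparison map $\Psi:\ourV(\Sigma_1,L_1)\otimes\ourV(\Sigma_2,L_2)\to\ourV(\Sigma_1\sqcup\Sigma_2,L_1\sqcup L_2)$ is well defined and injective by a pure pairing argument (Lemma~\ref{lem:disjoint-union}); this is the lax structure you get. Surjectivity is not formal. A connected bordism $\emptyset\to\Sigma_1\sqcup\Sigma_2$ is not a disjoint union, and $\ourBord(\cC)$ is not rigid, so there is no duality argument to fall back on. The paper gets surjectivity from Proposition~\ref{prop:collar}: $\ourV(\Sigma,L)$ is spanned by collar elements $[\bbSigma,\Gamma]$ with $\bbSigma\cong\Sigma\times[0,1]$, and collar elements over $\Sigma_1\sqcup\Sigma_2$ are visibly disjoint unions. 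The same proposition also drives finite-dimensionality, so it is the missing key lemma of your plan.

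Second, Step~3 is too vague at the decisive point, and its final strategy points the wrong way.
\begin{itemize}
\item Treating the red loops on the $M$-side as fixed insertions ``independent of $M'$'' requires spanning trees that split along $\Sigma$. The paper deletes from $\ourGraph_{\de_M}(M)$ the edges of 2-cells in $\Sigma_1$, picks a spanning tree $\spnTree'(M)$ of the resulting graph $\ourGraph'(M)$, and checks that $\spnTree(N)\cup\spnTree'(M)$ is a spanning tree of the glued $(2,3)$-graph for every $N$.
\item The fixed target is indeed the collar $\bbSigma$, which is not a handlebody when $\Sigma$ is closed. You only reach it through $M_1=\bbSigma\cup(\text{1-handles})$, whose number of handles depends on $M$. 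Comparing $M$ with $M_1$ needs red capping moves on the extra 2-cells $\twoCell{W}_\pm(\oneCell{X})$. The handles are then removed by the cutting move for $\tau_\cC$ on general 3-manifolds (Proposition~\ref{prop:cut}), not by skein relations on the boundary.
\item An injection of $\ourV(\Sigma)$ into an admissible skein module is not available. You would need $M\mapsto u(M)$ to be well defined in $\sk_{\adm}$ and to kill the radical. That amounts to injectivity of $\sk_{\adm}(\Sigma)\to\ourV(\Sigma)$, which the paper leaves open in Remark~\ref{rmk:compare}.
\item The workable direction is the surjection $E(\Sigma,L):\sk_{\adm}(\Sigma,L)\to\ourV(\Sigma,L)$ of Proposition~\ref{prop:our-V}. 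It comes from collar spanning, skein invariance of $\tau_\cC$ (Proposition~\ref{prop:skein}), and finite-dimensionality of admissible skein modules.
\end{itemize}

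A minor point: admissibility in Step~1 should be the condition of Definition~\ref{def:adm-cob}, imposed on the bordism itself. With it, closed composites are automatically bulk-admissible, and no ``declare zero'' convention is needed.
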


The key to the proof of Theorem \ref{thm:ocTFT} is Proposition \ref{prop:collar}, which says that the state space of any $\bd$-marked surface $(\Sigma, L)$ is spanned by what we call ``collar elements''. 
Such an element has underlying 3-manifold $\bbSigma = \Sigma \times [0,1]$ viewed as a bordism from $\emptyset$ to $(\Sigma, L)$. 
Using collar elements, we show that there exists a surjection 
\[E(\Sigma, L): \sk_{\adm}(\Sigma, L) \to \ourV(\Sigma, L)\]
from the admissible skein module of $(\Sigma, L)$ to the corresponding state space of $\ourV$ (Proposition \ref{prop:our-V}). In particular, $\ourV(\Sigma, L)$ is finite-dimensional.
In addition to the finite-dimensionality of the state spaces, Proposition \ref{prop:collar} also helps us to establish monoidality of $\ourV$. 

Both the TFT $\ourV$ in this paper and the TFT $\EuScript{S}$ of admissible skein modules in \cite{CGPV23} depend heavily on the properties of the multi-handlebody
invariant $\cgpt$, and the source bordism category of $\EuScript{S}$ can be identified with the full subcategory of $\ourBord(\cC)$ consisting of closed surfaces with no marked points. 
However, the two TFTs are defined by different methods: $\ourV$ is defined via the universal construction, while $\EuScript{S}$ is defined by Juh\'asz's presentation of bordism categories \cite{Juh18}. 
We expect that $\ourV$ agrees with $\EuScript{S}$ for closed surfaces and admissible bordisms with empty free boundary (Remark \ref{rmk:compare}). 

When the input category $\cC$ is semisimple, we argue in Remark \ref{rmk:atfd-1} that $\tau_{\cC}$ generalises the Turaev-Viro invariant by comparing both of these invariants with the alterfold invariant $Z_{\cC}$ in \cite{atfd1}. 
Namely, $\tau_{\cC}$ is a non-zero scalar multiple of $Z_{\cC}$ and, since both the open-closed TFT $\ourV$ and the alterfold TFT are built from the universal construction, the state space of any 2-alterfold under the alterfold TFT is isomorphic to the state space of its $B$-coloured region (which is a $\bd$-marked surface) under $\ourV$, see Remark \ref{rmk:atfd-2}. 

Finally, it would be interesting to explore the relation between the TFT $\ourV$ in this paper and the TFT constructed in \cite{DeRenzi2022}. We expect that for any spherical finite tensor category $\cC$, our TFT $\ourV$ coincide with the TFT in \cite{DeRenzi2022} associated to the Drinfeld center $\cZ(\cC)$ of $\cC$. This is inspired by the general ``RT=TV'' philosophy in the semisimple case \cite{Turaev:2017uxl, atfd2}, which is supported in particular by the alterfold theory. We plan to return to this question in the future.

\medskip

The paper is organised as follows. In Section 2, we review the definition and basic properties of the algebraic data that is used to define the 3-manifold invariant $\tau_{\cC}$ and the topological $\ourV$. In Section 3, we review admissible skein modules and the handlebody invariant $\cgpt$. In Section 4, we give the definition of the invariant $\tau_{\cC}$ and prove its topological invariance. Finally, in Section 5, we apply the universal construction to construct $\ourV$, and verify that it is indeed a topological field theory. 

\medskip

\noindent
\textbf{Acknowledgements.}
We would like to thank Francesco Costantino, Azat Gainutdinov and Shuang Ming for fruitful discussions.
I.R.\ thanks BIMSA for hospitality during a visit in September 2024 where this project was started.
Y.W. thanks University of Hamburg for hospitality during his visit in February 2026.
I.R.\ acknowledges support by the Deutsche Forschungsgemeinschaft (DFG, German Research Foundation) under Germany's Excellence Strategy - EXC 2121 ``Quantum Universe'' - 390833306 and the Collaborative Research Center - SFB 1624 ``Higher structures, moduli spaces and integrability'' - 506632645.
Y.W. is supported by NSFC No.~12301045 and No.~12571041. 

\bigskip

\noindent
\textbf{Conventions.}
Throughout this paper, $\kk$ will denote an algebraically closed field of arbitrary characteristic, and $\cC$ is a spherical finite tensor category over $\kk$.
Here, ``spherical'' means that $\cC$ is unimodular and pivotal and admits a two-sided modified trace on its projective ideal.

\section{Algebraic input}

In this section we describe the algebraic data we fix to define the three-manifold invariants and topological field theory in the later sections, and we collect the algebraic properties we will need.

\subsection{Spherical finite tensor categories}\label{sec:spherical}

As stated in the conventions, $\cC$ is a spherical finite tensor category over $\kk$. 
For \textit{finite tensor categories} we follow the conventions in \cite{EGNO}, see Definitions 1.8.6 and 4.1.1, as well as Section 6 there. In particular $\cC$, is rigid and has a simple tensor unit. We will take $\cC$ to be strictly monoidal for notational simplicity.

A tensor category is \textit{spherical} if it is pivotal, unimodular, and admits a two-sided modified trace on its projective ideal. Let us recall these notions in turn.

That $\cC$ is \textit{pivotal} means that there is a natural monoidal isomorphism $\delta : \mathrm{Id} \to (-)^{**}$, and we hence do not need to distinguish between left and right duals. For the various evaluation and coevaluation maps we employ the standard string diagram expressions, read from bottom to top,
\begin{equation}
\begin{split}
\ev_X = 
\includegraphics[valign=c,scale=1]{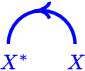} : X^* \otimes X \to \1
\quad & \quad 
\coev_X = 
\includegraphics[valign=c,scale=1]{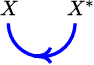} : \1 \to X \ot X^*\\ 
\widetilde{\ev}_X = 
\includegraphics[valign=c,scale=1]{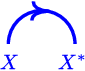} : X \otimes X^* \to \1 
\quad & \quad 
\widetilde{\coev}_X = 
\includegraphics[valign=c,scale=1]{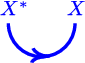} : \1 \to X^* \ot X
\end{split}
\end{equation}

We will denote by $\Proj_{\cC}$ the full subcategory of projective objects in $\cC$. These form a tensor ideal in $\cC$. Namely, tensor products of a projective object with any object are again projective, and direct sums and summands of projective objects are projective. If $P$ is projective, then so is $P^*$ \cite[Prop.\,6.1.3]{EGNO}, or, equivalently, every projective object in $\cC$ is also injective.

Every object $X \in \cC$ has a unique projective cover $P_X$. The projective cover $P_\1$ of the tensor unit $\1$ will be particularly important. The Hom-space $\cC(P_\1,\1)$ is one-dimensional by construction. That $\cC$ is \textit{unimodular} means that also $\cC(\1,P_\1)$ is non-zero, and hence necessarily one-dimensional. Equivalently, $\cC$ is unimodular iff $P_\1 \cong P_\1^*$. We will fix non-zero morphisms
\begin{equation}\label{eq:pi-and-iota}
    \pi_\1 : P_\1 \to \1
    \qquad \text{and} \qquad
    \iota_\1 : \1 \to P_\1 \ .
\end{equation}
These are unique up to scalars. The following conditions are equivalent: 
\begin{itemize}
    \item $\cC$ is semisimple,
    \item $\1$ is projective (then $X = X \otimes \1$ is projective for all $X \in \cC$),
    \item $\pi_\1 \circ \iota_\1 \neq 0$ (up to a rescaling, $\pi_\1 \circ \iota_\1 = \id_\1$ and so $\1$ is a direct summand of $P_\1$, hence projective).
\end{itemize}

A (left, right, or two-sided) \textit{modified trace} $\mtr$ on $\Proj_{\cC}$ is a family of linear maps
\begin{equation}
    ( \mtr_P )_{P \in \Proj_{\cC}}
    \quad \text{where} ~~
    \mtr_P : \End(P) \to \kk \ ,
\end{equation}
such that the $\mtr_P$ are cyclic and satisfy partial trace conditions which depend on whether $\mtr$ is left, right, or two-sided, see \cite{geer2010generalizedtracemodifieddimension} and \cite[Sec.\,2.2]{GPV13} for details. A useful way to think of a two-sided modified trace is as a linear form on the admissible skein module of the 2-sphere 
\cite{CGPV23}, see Proposition~\ref{prop:mod-tr-and-skein} below.
The following theorem is instrumental \cite[Cor.\,5.6]{GKP22}.

\begin{thm}
A pivotal unimodular finite tensor category $\mathcal{D}$ admits a non-zero right modified trace $\mtr$ on $\Proj_{\mathcal{D}}$. The right modified trace $\mtr$ is unique up to an overall scalar and induces a non-degenerate pairing for each $X \in \mathcal{D}$, $P \in \Proj_{\mathcal{D}}$,
$$
\mathcal{D}(X,P) \times \mathcal{D}(P,X) \to \kk
~~,\quad
(f,g) \mapsto \mtr_P(f \circ g) \ .
$$
\end{thm}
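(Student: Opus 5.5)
The plan has three parts: existence via the modified trace associated with an integral, uniqueness via the one-dimensionality of $\cC(P_\1,\1)$, and non-degeneracy by reduction to $X=\1$ through adjunction.

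\textbf{Existence.} Let $\mathcal{D}$ be a finite tensor category. By the Etingof–Nikshych–Ostrik theory of the distinguished invertible object $D$, unimodularity means $D\cong\1$. For any $P\in\Proj_{\mathcal{D}}$ the space $\mathcal{D}(P,\1)$ is the dual of the multiplicity of $P_\1$ in $P$. Using the pivotal structure, a right partial trace sends $f\in\End(P)$ to an element of $\End(P\otimes P^*)$ or similar. I would define $\mtr_P(f)$ as the unique scalar $\lambda$ such that
$$
\pi_\1\circ \mathrm{ptr}^R(\tilde f) = \lambda\,\pi_\1 ,
$$
where $\tilde f$ is $f$ transported to an endomorphism of a module of the form $P_\1\otimes V$. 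Concretely, I would realise $P$ as a summand of $P_\1\otimes X$ for some $X$, which is possible because $P_\1\otimes X$ ranges over enough projectives. I would then take the right partial trace over $X$, landing in $\End(P_\1)$. Finally I would compose $\pi_\1\circ(-)\circ\iota_\1$; in the unimodular case the space $\cC(\1,P_\1)$ is one-dimensional, so this gives a scalar multiple of $\pi_\1\circ\iota_\1$ measured against a fixed nonzero socle map. Equivalently, $\mtr$ is the ``cointegral'' functional, obtained from $\iota_\1$ and $\pi_\1$ via the composite $\End(P_\1)\to\Hom(\1,\1)$ on the socle/top.

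\textbf{Checking the trace axioms.} Independence of the chosen presentation, cyclicity, and the right partial trace property would then be checked. Cyclicity reduces to the symmetry of the pairing $\End(P_\1)\to\kk$, $f\mapsto$ the socle coefficient of $f$. This symmetry is exactly where unimodularity is used, via the Nakayama functor being isomorphic to the identity twisted by $D$. The key ingredients are the isomorphism $P_\1^*\cong P_\1$ and the Frobenius property of $\End(P_\1)$.

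\textbf{Uniqueness.} Any right modified trace restricted to $\End(P_\1)$ is determined on the ideal $P_\1\otimes\mathcal{D}$ by the partial trace property. Its value on $\End(P_\1)$ must factor through the top-to-socle component, which is one-dimensional. Hence any two right modified traces differ by a scalar.

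\textbf{Non-degeneracy.} This is the main obstacle. I would reduce to $X=\1$ via the adjunctions $\mathcal{D}(X,P)\cong\mathcal{D}(\1,P\otimes X^*)$ and $\mathcal{D}(P,X)\cong\mathcal{D}(P\otimes X^*,\1)$. These are compatible with $\mtr$ by the right partial trace property, and $P\otimes X^*$ is again projective. Then I need the following: for a projective $Q$, the pairing $\mathcal{D}(\1,Q)\times\mathcal{D}(Q,\1)\to\kk$ given by $\mtr_Q(f\circ g)$ is perfect. Decomposing $Q$ into indecomposables, only the summands $P_\1$ contribute to either space. On $P_\1$ the pairing is $\mtr_{P_\1}(\iota_\1\circ\pi_\1)$. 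This quantity is nonzero, since otherwise $\mtr$ would vanish on the socle of $\End(P_\1)$ and hence identically, by the uniqueness argument. Verifying this nonvanishing carefully, and verifying that the pairing between distinct $P_\1$-summands is diagonalisable, is the delicate step. In practice I would cite \cite[Cor.\,5.6]{GKP22} for the details.
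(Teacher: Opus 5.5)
The paper gives no proof of this statement. It simply quotes \cite[Cor.\,5.6]{GKP22}, which is also where your proposal ends up. The self-contained sketch you give before that, however, breaks down at the one genuinely hard step, existence.

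Your recipe is: transport $f$ to $P_\1\otimes V$, take the right partial trace, and read off $\lambda$ from $\pi_\1\circ(-)=\lambda\,\pi_\1$. Once $\mathcal D$ is non-semisimple, i.e.\ once $\pi_\1\circ\iota_\1=0$, this recipe is not even independent of the presentation:
\begin{itemize}
\item With $P_\1=P_\1\otimes\1$ it is the top character of $\End(P_\1)$. It sends $\id_{P_\1}\mapsto 1$ and $\iota_\1\circ\pi_\1\mapsto 0$.
\item With $P_\1$ realised as a retract of $P_\1\otimes P_\1$ via $\pi_\1\otimes\id$, the partial trace factors through $\pi_\1$ (see below). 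The recipe then returns $0$.
\end{itemize}
Your variant $g\mapsto\pi_\1\circ g\circ\iota_\1$ is identically zero, since $g\circ\iota_\1\in\kk\,\iota_\1$. In the semisimple case the recipe does give the categorical trace, which is why it looks plausible.

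A ``socle coefficient'' on $\End(P_\1)$ is not defined without choosing a complement of the socle. Symmetry of the form also cannot be what singles out the trace. For $\mathrm{Rep}(\kk[\mathbb{Z}/p])$ in characteristic $p$, $\End(P_\1)\cong\kk[x]/(x^p)$ is commutative, so every linear form on it is symmetric, yet the modified trace is unique up to scalar. Producing a nonzero, well-defined, cyclic form with the partial trace property needs genuine input. Possible sources are GKP's argument, the identification of the Nakayama functor with $(-)^{**}$ twisted by the distinguished invertible object, or integrals in the Hopf algebra case. Your sketch supplies none of these.

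The other parts are fine once one step is made precise. Your uniqueness claim, that the trace ``factors through the top-to-socle component'', is asserted without reason. The mechanism is as follows. For $P$ projective, $\pi_\1\otimes\id_P\colon P_\1\otimes P\to P$ has a section $s$. Cyclicity and the right partial trace over $P$ give $\mtr_P(f)=\mtr_{P_\1}\bigl(\mathrm{ptr}^R_P(s\circ f\circ(\pi_\1\otimes\id_P))\bigr)$. This partial trace equals $h\circ\pi_\1$ for some $h\in\mathcal D(\1,P_\1)=\kk\,\iota_\1$. Hence $\mtr_P(f)=c\,\mtr_{P_\1}(\iota_\1\circ\pi_\1)$, with $c$ independent of $\mtr$.

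This gives uniqueness, and it also shows $\mtr_{P_\1}(\iota_\1\circ\pi_\1)\neq0$ whenever $\mtr\neq0$. With that in hand, your reduction to $X=\1$ goes through. Cyclicity makes the pairing on the $P_\1$-summands diagonal, so non-degeneracy follows.

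The same formula does not give existence for free. Two sections differ by a map into $\ker(\pi_\1)\otimes P$. That only forces the difference of the corresponding $h$'s to land in $\ker\pi_\1$, and $\iota_\1$ itself lands there. So existence is exactly the part that must come from \cite{GKP22}.
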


A pivotal unimodular finite tensor category is called \textit{spherical} if the unique-up-to-scalar right modified trace is two-sided.
By \cite[Thm.\,1.3]{Shibata_2021}, this is equivalent to the definition of sphericality in \cite[Def.\,3.5.2]{douglas2018dualizabletensorcategories}. If $\cC$ is semisimple, it agrees with the earlier notion of sphericality in \cite{Barrett:1993zf}, namely that that left and right categorical trace coincide.

\medskip

To summarise, the algebraic input for the construction of manifold invariants and the topological field theory below is a triple
\begin{equation}\label{eq:oc-invariant-alg-input}
    (\cC,\mtr,\pi_\1) \ ,
\end{equation}
where
\begin{itemize}
    \item $\cC$ is a spherical finite tensor category,
    \item $\mtr$ a non-zero two-sided modified trace on $\Proj_{\cC}$, and
    \item $\pi_\1 : P_\1 \to \1$ is a surjection.
\end{itemize}
This determines the embedding $\iota_\1:\1 \to P_\1$ via the non-degeneracy of the modified trace through the normalisation condition
\begin{equation}\label{eq:t-i-pi-norm}
    \mtr_{P_\1}(\iota_\1 \circ \pi_\1) = 1 \ .
\end{equation}
Below we will only deal with two-sided modified traces and so from now on, by ``modified trace'' we will always mean ``two-sided modified trace''.

\subsection{The central monad}

The construction of chromatic maps below relies on the central monad $Z : \cC \to \cC$ \cite{BV12}. For each $X \in \cC$, $Z(X)$ is defined to be the coend 
\begin{equation}
   Z(X) = \int^{V \in \cC} 
   V^* \otimes X \otimes V \ ,
\end{equation}
with dinatural maps $\iota_{X,V} : V^*\otimes X \otimes V \to Z(X)$. By construction of the functor $Z$, $\iota_{X,V}$ is natural in $X$. In fact, $Z$ is even a quasi-triangular Hopf monad, and the name ``central monad'' derives from the fact that the category of $Z$-modules is braided-equivalent to the Drinfeld centre of $\cC$ \cite{BV12, DS07}.

\begin{remark}
The Drinfeld centre $\mathcal{Z}(\mathcal{D})$ of a pivotal finite tensor category $\mathcal{D}$ has a ribbon structure if and only if $\mathcal{D}$ is spherical, see \cite[Thm.\,5.11]{shimizu2021ribbonstructuresdrinfeldcenter} and \cite[Cor.\,2.13]{muller2024distinguishedinvertibleobjectribbon}. By \cite[Prop.\,4.4]{etingof2004analogueradfordss4formula} and \cite[Thm.\,1.1]{shimizu2016nondegeneracyconditionsbraidedfinite}, the braiding on $\mathcal{Z}(\mathcal{D})$ is non-degenerate, and so in our setting $\mathcal{Z}(\cC)$ is a modular tensor category, that is, a non-degenerately braided finite ribbon tensor category.
\end{remark}

One way to construct $Z(X)$ is as a cokernel in $\cC$ \cite[Prop.\,5.1.7]{KL01} (as $\cC$ is rigid, the tensor product functor is biexact):

\begin{prop}\label{prop:monad-coker}
    Let $G$ be a projective generator of $\cC$ and let $\{ f_i \}_{i=1,\dots,n}$ be a basis of $\End(G)$. The following sequence is exact,
$$
\bigoplus_{i=1}^n G^* \otimes X \otimes G
\xrightarrow{\bigoplus_i f_i^* \otimes \id - \id \otimes f_i}
G^* \otimes X \otimes G 
\xrightarrow{\iota_{X,G}} Z(X) 
\to 0 \ .
$$
\end{prop}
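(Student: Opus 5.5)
The plan is to derive this from the universal property of $Z(X)$ as a coend, together with the projectivity of $G$.

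\emph{Step 1: the second map is an epimorphism.} Every object $V\in\cC$ is a quotient of some $G^{\oplus m}$, because $G$ is a projective generator. Choose an epimorphism $p:G^{\oplus m}\to V$ with components $p_j:G\to V$. Dinaturality of $\iota_{X,-}$ applied to $p_j$ gives
\[
\iota_{X,V}\circ(\id\otimes\id\otimes p_j)=\iota_{X,G}\circ(p_j^*\otimes\id\otimes\id)
\]
as maps $V^*\otimes X\otimes G\to Z(X)$. The functor $V^*\otimes X\otimes(-)$ is exact, since the tensor product is biexact. Hence $\id\otimes\id\otimes p$ is an epimorphism, so the image of $\iota_{X,V}$ lies in the image of $\iota_{X,G}$. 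The coend is jointly epimorphic over all $V$, so its image is generated by the images of the $\iota_{X,V}$. Therefore $\iota_{X,G}$ is epi.

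\emph{Step 2: the composite vanishes.} This is exactly dinaturality of $\iota_{X,-}$ with respect to each $f_i\in\End(G)$.

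\emph{Step 3: exactness in the middle, which I expect to be the main step.} Let $Q$ be the cokernel of $\bigoplus_i(f_i^*\otimes\id-\id\otimes f_i)$, with quotient map $q$. By Step 2, $\iota_{X,G}$ factors as $\bar\iota\circ q$. To show $\bar\iota$ is an isomorphism, I will construct a dinatural family $j_V:V^*\otimes X\otimes V\to Q$ with $j_G=q$. The coend property then gives $Z(X)\to Q$, and the two composites are identities by the uniqueness parts of the universal properties, using Step 1 for one direction.

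To build $j_V$, present $V$ by projectives, $G^{\oplus k}\xrightarrow{r}G^{\oplus m}\xrightarrow{p}V\to0$. Dually, $V^*$ embeds into $(G^*)^{\oplus m}$, or alternatively one works with both legs simultaneously. The cleaner route is the standard fact that the coend over $\cC$ equals the coend over the full subcategory $\Proj_\cC$, and even over the one-object subcategory $\{G\}$ with endomorphism algebra $\End(G)$. Concretely, $\cC\simeq\mathrm{mod}\text{-}A$ with $A=\End(G)^{\mathrm{op}}$, the functor $V\mapsto \cC(G,V)$ is an equivalence, and $V\cong G\otimes_A\cC(G,V)$ in the sense of the canonical coequaliser $G\otimes A\otimes\cC(G,V)\rightrightarrows G\otimes\cC(G,V)\to V$. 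Substituting this presentation into both slots of $V^*\otimes X\otimes V$ and using right exactness of the tensor product in each variable, any dinatural family out of the $G$-term that coequalises the $f_i$ extends uniquely to all $V$. This is the density (co-Yoneda) argument.

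One must check carefully that the dual variable $V^*$ is handled correctly, since $V^*\otimes X\otimes V$ is contravariant in the first slot. Here rigidity gives $V^*\otimes Y\cong$ an exact functor of $V$, and the expression $\cC(G,V)$ can be rewritten through the pairing. That check is where the real work lies.

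Since the basis $\{f_i\}$ spans $\End(G)$ and the relations are linear in $f$, coequalising all $f_i$ is the same as coequalising all of $\End(G)$. This completes the identification $Z(X)\cong Q$.
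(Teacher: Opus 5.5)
Steps 1 and 2 are correct. The frame of Step 3 is also right: given a dinatural family $j_V\colon V^*\otimes X\otimes V\to Q$ with $j_G=q$, the induced $\psi\colon Z(X)\to Q$ satisfies $\psi\circ\bar\iota=\id_Q$ because $q$ is epi, and $\bar\iota\circ\psi=\id_{Z(X)}$ because $\iota_{X,G}$ is epi by your Step 1. This is the density argument behind \cite[Prop.\,5.1.7]{KL01}, which is all the paper invokes, with biexactness of $\otimes$ as the hypothesis check.

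The gap is that you never construct $j_V$, and that is the only real content of the proposition. Appealing to ``the standard fact that the coend over $\cC$ equals the coend over $\{G\}$'' just restates the proposition, and you leave the contravariant slot explicitly unfinished. The route you sketch also fails as stated. You propose substituting $V\cong\cC(G,V)\otimes_{\End(G)}G$ into both slots and using right exactness. But $V\mapsto V^*\otimes X\otimes W$ is contravariant, so it turns this cokernel presentation of $V$ into an equaliser. That exhibits $V^*\otimes X\otimes W$ as a subobject of copies of $G^*\otimes X\otimes W$, which is useless for defining maps out of it.

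The missing observation is that only the covariant slot needs exactness; the contravariant slot comes for free, which is the co-Yoneda lemma in that variable. Here is the construction.

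\emph{Defining $j_V$.} Fix $V$. Apply the right exact functor $V^*\otimes X\otimes(-)$ to the coequaliser $\cC(G,V)\otimes_\kk\End(G)\otimes_\kk G\rightrightarrows\cC(G,V)\otimes_\kk G\to V$. This shows that morphisms $V^*\otimes X\otimes V\to Q$ correspond, via $\phi_g=j_V\circ(\id\otimes\id\otimes g)$, to families $\phi_g\colon V^*\otimes X\otimes G\to Q$ that are linear in $g\in\cC(G,V)$ and satisfy $\phi_{g\circ f}=\phi_g\circ(\id\otimes\id\otimes f)$ for all $f\in\End(G)$. Take $\phi_g:=q\circ(g^*\otimes\id_X\otimes\id_G)$. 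It satisfies the condition because $(g\circ f)^*=f^*\circ g^*$ and $q$ coequalises $f^*\otimes\id$ and $\id\otimes f$ for every $f$, by linearity in $f$ as you note. This defines $j_V$ with $j_V\circ(\id\otimes\id\otimes g)=q\circ(g^*\otimes\id\otimes\id)$; taking $g=\id_G$ gives $j_G=q$.

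\emph{Dinaturality.} Let $h\colon V\to W$, and let $g$ run over a basis of $\cC(G,V)$. Precompose both $j_V\circ(h^*\otimes\id\otimes\id_V)$ and $j_W\circ(\id_{W^*}\otimes\id\otimes h)$ with the morphism $\bigoplus_g\id_{W^*\otimes X}\otimes g$. This morphism is epi because $G$ is a generator and $W^*\otimes X\otimes(-)$ is exact. After precomposition, both sides become $q\circ((h\circ g)^*\otimes\id\otimes\id)$, so they agree.

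With this in place, the rest of your Step 3 goes through.
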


In the above expression it is understood that $f_i^* \otimes \id_{X \otimes G} - \id_{G^* \otimes X} \otimes f_i$ starts in the $i$'th copy of $G^* \otimes X \otimes G$ of the direct sum.

As in \cite[Sec.\,4.5]{CGPV23}, we define 
\begin{equation}\label{eq:del-via-iota}
\partial_{X,V} = 
\includegraphics[valign=c,scale=0.8]{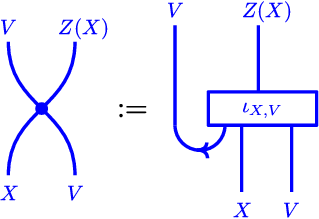}
 : X \otimes V \to V \otimes Z(X) \ .   
\end{equation}
The maps $\partial_{X,V}$ are natural in both $X$ and $V$. The universal property of $Z(X)$ now takes the following form: For each natural transformation $\xi : X \otimes (-) \to (-) \otimes Y$ there is a unique $\tilde\xi : Z(X) \to Y$ such that for all $V$,
\begin{equation}\label{eq:ZX-universal-via-delta}
\includegraphics[valign=c,scale=0.8]{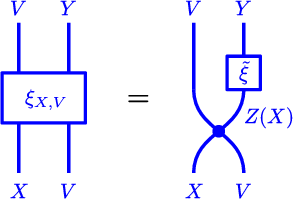}
\end{equation}

The Hopf monad $Z$ admits a unique-up-to-scalar integral and cointegral \cite[Thm.\ 4.8 and Cor.\,4.10]{shimizu2017integralsfinitetensorcategories}\footnote{\label{fn:comonad-vs-monad}
That reference uses Hopf comonads, see \cite[Sec.\,2.5\,\& Prop.\,2.10]{Berger_2021} for the translation to Hopf monads.}
\begin{equation}\label{eq:int-coint-normalisation}
\Lambda : Z(\1) \to \1~~,
\quad
\lambda : \1 \to Z(\1)~~,
\quad \text{such that} ~~
\Lambda \circ \lambda = 1 \ .
\end{equation}
That the tensor unit appears above rather than a more general invertible object is due to the assumption that $\cC$ is unimodular.

The integral $\Lambda$ is determined by the choice of modified trace $\mtr$ on $\cC$ as follows. For $P$ projective let $\{ \alpha_i \} \subset \cC(P,\1)$ be a basis, and let $\{ \alpha^i \} \subset \cC(\1,P)$ be its dual basis in the sense that $\mtr_P(\alpha^i \circ \alpha_j) = \delta_{i,j}$. Define
\begin{equation}\label{eq:Lam-t-P-def}
    \Lambda^{\mtr}_P = \sum_i 
\includegraphics[valign=c,scale=0.8]{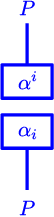}
    : P \to P \ . 
\end{equation}
The morphism $\Lambda^{\mtr}_P$ is independent of the choice of basis $\{ \alpha^i \}$ and hence uniquely determined by $\mtr$. The dual basis relation also implies that
\begin{equation}\label{eq:trace-Lambda}
    \mtr_P( \Lambda^{\mtr}_P ) = \dim \cC(P,\1) \ .
\end{equation}
We have \cite[Lem.\,4.1\,\&\,4.3]{CGPV23}:

\begin{lemma}\label{lem:Lam-natural}
There is a unique natural endomorphism $\Lambda^\mtr_\bullet$ of the identity functor such that for each $P \in \Proj_{\cC}$, $\Lambda^\mtr_P$ is as above.
\end{lemma}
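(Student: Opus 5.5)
The plan has two parts. First I show that the family $(\Lambda^\mtr_P)_{P\in\Proj_\cC}$ is already natural with respect to all morphisms between projective objects. Then I extend it uniquely to all of $\cC$ using projective presentations, since every object of a finite tensor category is a quotient of a projective object.

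\emph{Step 1: naturality on $\Proj_\cC$.} Let $f:P\to Q$ with $P,Q$ projective. Choose bases $\{\alpha_i\}\subset\cC(P,\1)$ and $\{\beta_j\}\subset\cC(Q,\1)$, with dual bases $\{\alpha^i\}$ and $\{\beta^j\}$ with respect to $\mtr$. Non-degeneracy of the pairing gives two expansion identities:
\begin{equation*}
x=\sum_j \mtr_Q(x\circ\beta_j)\,\beta^j \quad (x\in\cC(\1,Q)), \qquad y=\sum_i \mtr_P(\alpha^i\circ y)\,\alpha_i \quad (y\in\cC(P,\1)).
\end{equation*}
Apply the first identity to $x=f\circ\alpha^i$. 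Then use cyclicity of $\mtr$, namely $\mtr_Q(f\circ g)=\mtr_P(g\circ f)$ for $g:Q\to P$, to rewrite $\mtr_Q(f\circ\alpha^i\circ\beta_j)=\mtr_P(\alpha^i\circ\beta_j\circ f)$. This gives
\begin{equation*}
f\circ\Lambda^\mtr_P=\sum_{i,j}\mtr_P(\alpha^i\circ\beta_j\circ f)\,\beta^j\circ\alpha_i=\sum_j\beta^j\circ(\beta_j\circ f)=\Lambda^\mtr_Q\circ f ,
\end{equation*}
where the middle equality is the second expansion identity applied to $y=\beta_j\circ f$. In passing, the same computation with $Q=P$ and $f=\id_P$ shows that $\Lambda^\mtr_P$ does not depend on the choice of basis.

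\emph{Step 2: extension and uniqueness.} For an arbitrary $X\in\cC$, choose a projective presentation $P_1\xrightarrow{d}P_0\xrightarrow{p}X\to0$; this exists because $\cC$ has enough projectives. By Step 1, $\Lambda^\mtr_{P_0}\circ d=d\circ\Lambda^\mtr_{P_1}$, so $\Lambda^\mtr_{P_0}$ preserves $\mathrm{im}\,d=\ker p$. Hence there is a unique $\Lambda^\mtr_X$ with $\Lambda^\mtr_X\circ p=p\circ\Lambda^\mtr_{P_0}$.

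Now let $g:X\to Y$ be any morphism, and let $q:P_0'\to Y$ be the chosen epimorphism for $Y$. By projectivity, $g\circ p$ lifts to some $\tilde g:P_0\to P_0'$ with $q\circ\tilde g=g\circ p$. Step 1 then gives
\begin{equation*}
g\circ\Lambda^\mtr_X\circ p=q\circ\tilde g\circ\Lambda^\mtr_{P_0}=q\circ\Lambda^\mtr_{P_0'}\circ\tilde g=\Lambda^\mtr_Y\circ g\circ p ,
\end{equation*}
and since $p$ is epi we get $g\circ\Lambda^\mtr_X=\Lambda^\mtr_Y\circ g$. Applying this argument to $g=\id_X$ with two different presentations shows that $\Lambda^\mtr_X$ is independent of the presentation. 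For $X$ itself projective, taking $p=\id_X$ shows that the new definition agrees with the original $\Lambda^\mtr_X$.

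Uniqueness is then immediate: any natural endomorphism $\eta$ of the identity functor satisfies $\eta_X\circ p=p\circ\eta_{P_0}$ with $p$ epi, so $\eta$ is determined by its values on projectives.

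The only step with real content is the cyclicity manipulation in Step 1. Everything after it is the standard fact that natural endomorphisms of $\mathrm{Id}_\cC$ are the same as natural endomorphisms of the inclusion $\Proj_\cC\hookrightarrow\cC$.
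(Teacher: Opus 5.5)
Your proof is correct: Step~1 gets basis-independence and naturality on $\Proj_\cC$ from non-degeneracy of $\cC(\1,P)\times\cC(P,\1)\to\kk$ together with cyclicity of $\mtr$. Cyclicity legitimately applies to $\alpha^i\circ\beta_j\colon Q\to P$, since only its source and target need to be projective. Step~2 then extends uniquely along projective presentations. The paper gives no argument of its own and simply defers to \cite[Lem.\,4.1\,\&\,4.3]{CGPV23}, so your two steps supply a complete, self-contained proof of exactly what the paper cites.
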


By \eqref{eq:ZX-universal-via-delta} with $X=Y=\1$, this implies the existence of a unique $\Lambda : Z(\1)\to \1$ such that
\begin{equation}\label{eq:Lambda-from-Lambda-t-X}
    \includegraphics[valign=c,scale=0.8]{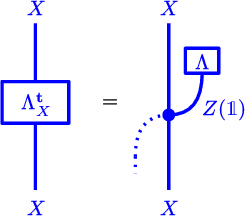}
\ ,
\end{equation}
which is indeed an integral for $Z$ \cite[Lem.\,4.8]{CGPV23}.

\begin{remark}\label{rem:ssi-from-integral}
The integral $\Lambda$ gives us another way to characterise semisimplicity of $\cC$ \cite[Prop.\,5.6]{shimizu2016monoidalcentercharacteralgebra}.${}^{
\ref{fn:comonad-vs-monad}}$ Namely, let $u := \iota_{\1,\1} : \1 \to Z(\1)$ be the unit of the monad $Z$. Then
$$
\cC \text{ semisimple}
\quad \Leftrightarrow \quad
\Lambda \circ u \neq 0 \ .
$$
Thanks to the explicit expression \eqref{eq:Lam-t-P-def}, we can give a simpler argument as compared to \cite{shimizu2016monoidalcentercharacteralgebra}. The direction `$\Rightarrow$' follows form explicit computation, which we omit.
For `$\Leftarrow$' first note that by \eqref{eq:Lambda-from-Lambda-t-X},
$\Lambda \circ u \neq 0$ is equivalent to $\Lambda^\mtr_\1 = \alpha \, \id_\1$ for some $\alpha \neq 0$. On the other hand, from \eqref{eq:t-i-pi-norm} and \eqref{eq:Lam-t-P-def} we see that $\Lambda^\mtr_{P_\1} = \iota_\1 \circ \pi_\1$.
For the surjection $\pi_\1 : P_\1 \to \1$, naturality of $\Lambda^\mtr_\bullet$ (Lemma~\ref{lem:Lam-natural}) gives
\begin{equation}
    \alpha^{-1} \, \pi_\1 
    = 
     \Lambda^\mtr_\1  \circ \pi_\1
    = 
     \pi_\1 \circ \Lambda^\mtr_{P_\1} 
    = 
    \pi_\1 \circ \iota_\1 \circ \pi_\1 \ .
\end{equation}
Thus we must have $\pi_\1 \circ \iota_\1 \neq 0$, which is one of the equivalent characterisations of semisimplicity of $\cC$.
\end{remark}

\subsection{Chromatic maps}

A \textit{chromatic map (based on a projective $P$ for a projective generator $G$)} is a map $c_{P,G} : P \otimes G \to P \otimes G$ such that for all $X \in \cC$,
\begin{equation}\label{eq:chromatic-def}
\includegraphics[valign=c,scale=0.8]{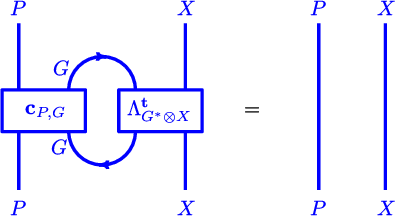}
    \ ,
\end{equation}
see \cite[Sec.\,1.8]{CGPV23}. Importantly, chromatic maps always exist, and we will quickly review their construction in
\cite{CGPV23} as we will need it later. 
Using that duals of projective covers are again projective covers, one checks that $G^*$ is also a projective generator, and from Proposition~\ref{prop:monad-coker} we see that $\iota_{\1,G^*}$ is surjective. 
Hence there exists a (typically non-unique) map $d_P : P \to P \otimes G \otimes G^*$ such that
\begin{equation}\label{eq:dP-definition}
    \begin{tikzcd}
        &&&& P \ar[d,"\id_P \otimes \lambda"]
        \arrow[dllll, dashed, "\exists d_P"']        
        \\
        P \otimes G \otimes G^* 
        \arrow[rr, "\id_P \otimes \delta_G \otimes \id_{G^*}"'] 
        &&
        P \otimes G^{**} \otimes G^* 
        \arrow[rr, "\id_P \otimes \iota_{\1,G^*}"'] 
        &&
        P \otimes Z(\1)
    \end{tikzcd}
    \quad .
\end{equation}
Define
\begin{equation}
c_{P,G} := \big[ P\otimes G \xrightarrow{d_P \otimes \id_G} P\otimes G\otimes G^*\otimes G \xrightarrow{\id_{P\otimes G} \otimes \mathrm{ev}_G} P\otimes G \big]
\end{equation}
Expressing $d_P$ in terms of $c_{P,G}$ and combining with \eqref{eq:del-via-iota}, the defining condition \eqref{eq:dP-definition} for $d_P$ is equivalent to
\begin{equation}\label{eq:chromatic-via-cointegral}
\includegraphics[valign=c,scale=0.8]{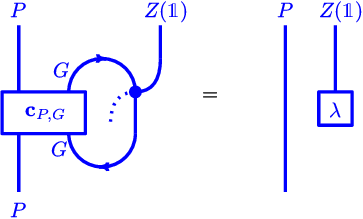}
\end{equation}
So far we know that there exists a $c_{P,G}$ which satisfies \eqref{eq:chromatic-via-cointegral}. Next we sketch the argument why $c_{P,G}$ is a chromatic map, i.e.\ why \eqref{eq:chromatic-via-cointegral} implies \eqref{eq:chromatic-def}. Writing $L$ and $R$ for the left and right hand side of \eqref{eq:chromatic-def}, we have
\begin{equation}
L \stackrel{(1)}{=} 
\includegraphics[valign=c,scale=0.7]{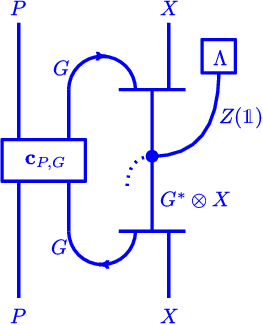} 
\stackrel{(2)}{=}
\includegraphics[valign=c,scale=0.7]{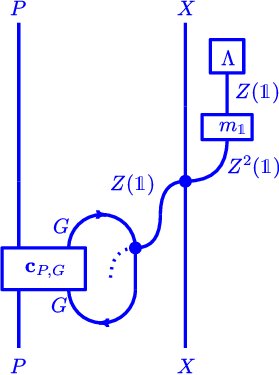} 
\stackrel{(3)}{=}
\includegraphics[valign=c,scale=0.7]{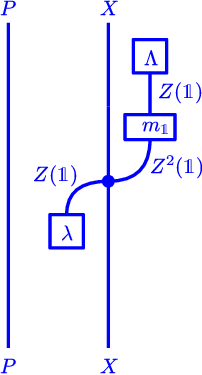} 
\stackrel{(4)}{=} R
\end{equation}
In step (1), the horizontal bars just mean that we combine the parallel strands labelled $G^*$ and $X$ into a single strand labelled $G^* \otimes X$. Formally, it stands for a coupon labelled $\id_{G^* \otimes X}$. We then substitute the definition of $\Lambda$ in \eqref{eq:Lambda-from-Lambda-t-X}. In step (2) we use the definition of the product $m_X : Z(Z(X)) \to Z(X)$ of the monad $X$, which we did not review, and for which we refer to \cite{bruguières2006hopfmonads} and \cite[Sec.\,4.5]{CGPV23}. Step (3) is \eqref{eq:chromatic-via-cointegral}, and step (4) follows from the defining property of a cointegral (see again the previous two references) and from the normalisation $\Lambda \circ \lambda = 1$.

We denote the affine linear space of solutions to  \eqref{eq:chromatic-via-cointegral} by 
\begin{equation}\label{eq:CPG-def}
    C_{P,G} = \big\{ c_{P,G} \in \End(P \otimes G) \,\big|\, c_{P,G} \text{ solves \eqref{eq:chromatic-via-cointegral}} \big\} \ ,
\end{equation}
and refer to $C_{P,G}$ as the \textit{chromatic space (based on the projective $P$ for the projective generator $G$)}. 
We summarise the discussion so far by the following theorem \cite[Thm.\,4.2]{CGPV23}:

\begin{thm}
For each $P \in \Proj_{\cC}$ and projective generator $G$, the chromatic space $C_{P,G}$ is non-empty, and each $c_{P,G} \in C_{P,G}$ is a chromatic map.
\end{thm}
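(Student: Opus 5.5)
The theorem has two claims: $C_{P,G}$ is non-empty, and every element of it is a chromatic map. I will prove them in that order, following the discussion above.

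\textbf{Non-emptiness.} The plan is to produce an explicit solution of \eqref{eq:chromatic-via-cointegral} from a lift $d_P$ as in \eqref{eq:dP-definition}.

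First, I check that $G^*$ is a projective generator. Duals of projectives are projective. The dual of the projective cover of a simple $S$ is the projective cover of a simple, namely of the socle-dual. Hence $G^*$ contains every indecomposable projective as a summand.

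Next, Proposition~\ref{prop:monad-coker} with $X=\1$ and generator $G^*$ shows that $\iota_{\1,G^*}: G^{**}\otimes G^* \to Z(\1)$ is an epimorphism. Since $\otimes$ is exact, $\id_P\otimes\iota_{\1,G^*}$ is also an epimorphism. Because $P$ is projective, the morphism $\id_P\otimes\lambda : P \to P\otimes Z(\1)$ therefore lifts through it. Composing the lift with $\id_P\otimes\delta_G^{-1}\otimes\id_{G^*}$ gives $d_P$.

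I then set $c_{P,G} := (\id_{P\otimes G}\otimes\ev_G)\circ(d_P\otimes\id_G)$. One recovers $d_P$ from $c_{P,G}$ via $\coev_G$, by bending the $G^*$ leg back with the zigzag identity. Substituting the definition \eqref{eq:del-via-iota} of $\partial_{\1,G}$ then turns \eqref{eq:dP-definition} into exactly \eqref{eq:chromatic-via-cointegral}. So this $c_{P,G}$ lies in $C_{P,G}$. The set is affine because \eqref{eq:chromatic-via-cointegral} is a linear equation with a fixed inhomogeneous right-hand side.

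\textbf{Chromatic property.} Let $c \in C_{P,G}$ and fix $X$. Write $L$ and $R$ for the two sides of \eqref{eq:chromatic-def}. I would carry out the four-step chain displayed above, justifying each step:
\begin{enumerate}
\item Merge the $G^*\otimes X$ strands. The $X$-dependent part then has the form $\Lambda^{\mtr}$ applied around a projective, since $P\otimes(\cdots)$ is projective. By Lemma~\ref{lem:Lam-natural} and \eqref{eq:Lambda-from-Lambda-t-X}, rewrite it as $\Lambda\circ$ (the coend map $\partial$).
\item Use that $\partial$ is compatible with tensor products of the strand objects. This is the defining property of the monad multiplication $m_\1 : Z(Z(\1))\to Z(\1)$: a $\partial$ over $V\otimes W$ equals the composition of the $\partial$'s over $V$ and $W$ followed by $m$. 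This turns the nested picture into $\Lambda\circ m_\1$ applied to $\partial_{\1,\cdot}$ composed with the part coming from $c$.
\item Apply \eqref{eq:chromatic-via-cointegral} to replace the $c$-portion by $\lambda$.
\item Use the cointegral identity $m_\1\circ Z(\lambda) = \lambda\circ\varepsilon$ (in the appropriate form), together with $\Lambda\circ\lambda=1$ from \eqref{eq:int-coint-normalisation}. This collapses everything to $R$.
\end{enumerate}

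\textbf{Main obstacle.} I expect steps (2) and (4) to be the hard part. They require the precise conventions for $m$, for the Hopf monad counit, and for the (co)integral axioms translated from comonads, as in footnote~\ref{fn:comonad-vs-monad}. I would verify them by testing against the universal property \eqref{eq:ZX-universal-via-delta}. Two morphisms out of $Z(Z(\1))$ agree once they agree after precomposition with all the $\partial$'s, so each identity reduces to a check on dinatural components, where it follows from the definitions. Pivotality, via $\delta_G$, must be tracked when identifying $G^{**}$ with $G$ throughout.
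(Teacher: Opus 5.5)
Your proposal is correct and follows the paper's argument essentially step for step. You lift $\id_P\otimes\lambda$ through the epimorphism $\id_P\otimes\iota_{\1,G^*}$ using projectivity of $P$, pass to $c_{P,G}$ via the zigzag identity, and then run the same four-step chain: substitute \eqref{eq:Lambda-from-Lambda-t-X}, split $\partial$ via the monad multiplication, apply \eqref{eq:chromatic-via-cointegral}, and finish with the cointegral property and $\Lambda\circ\lambda=1$.

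One small correction to your step (1): the object there is $G^*\otimes X$, which is projective because $G^*$ is, not $P\otimes(\cdots)$. Projectivity is in fact not needed at that point, since $\Lambda^{\mtr}_\bullet$ is natural on all objects by Lemma~\ref{lem:Lam-natural}.
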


\begin{remark}\label{rem:red-loop-zero}
Write $Z_0 : Z(\1) \to \1$ for the counit of the Hopf monad $Z$, characterised by $\id_X = (\id_X \otimes Z_0) \circ \partial_{\1,X}$. Composing \eqref{eq:chromatic-via-cointegral} with $\id_P \otimes Z_0$, we get
\begin{equation}\label{eq:G-loop-Z0lam}
  \includegraphics[valign=c,scale=1]{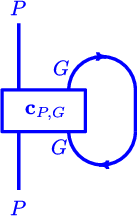}
  = (Z_0 \circ \lambda) \, \id_P \ .
\end{equation}
This particular morphism will be relevant when evaluating a red loop bounding a disc, as we will explain later in Remark~\ref{rem:red-loop-via-C_PG}.
Note that $Z_0 \circ \lambda$ is precisely opposite to the combination used in Remark~\ref{rem:ssi-from-integral} to detect semisimplicity. We are not aware of a proof that, for characteristic zero, $Z_0 \circ \lambda \neq 0$ iff $\Lambda \circ u \neq 0$ (for finite characteristic this is false in general), but it seems analogous to the Larsen-Radford theorem relating semisimplicity and cosemisimplicity for Hopf algebras, see \cite[Sec.\,4.7]{shimizu2017integralsfinitetensorcategories} for a related discussion. However, 
there are two special cases where we can already conclude that $Z_0 \circ \lambda \neq 0$ is equivalent to semisimplicity: 
\begin{enumerate}
    \item 
If $\cC$ is in addition ribbon and modular, then $Z = L \otimes (-)$ for $L = Z(\1)$ the canonical coend of $\cC$. As $\cC$ is ribbon, $L$ is a braided Hopf algebra, and as $\cC$ is modular, $L$ is self-dual.        
    Indeed, by one of the equivalent characterisations of modularity, there is a non-degenerate Hopf pairing for $L$, which exchanges integrals and cointegrals (see \cite{Lyu95M}, \cite[Def.\,5.2.7, Prop.\,4.2.14]{KL01} and \cite[Thm.\,1.1]{shimizu2016nondegeneracyconditionsbraidedfinite}). 
    \item  If $\cC = \mathrm{Rep}(H)$ for $H$ a finite-dimensional unimodular and unibalanced Hopf algebra such that $\dim(H) \neq 0$ in $\kk$, see \cite[Ex.\,1.5]{CGPV23}.
\end{enumerate}  
Much of the complication in the construction of the three-manifold invariants below is due to the fact that we allow for the case $Z_0 \circ \lambda = 0$.
\end{remark}

\begin{remark}\label{rem:ssi-chromatic}
Let us consider the case that $\cC$ is a spherical fusion category. Then $\cC = \Proj_{\cC}$ and for the modified trace $\mtr$ we take the categorical trace $\mathrm{tr}$.
The projective cover of $\1$ is $P_\1=\1$ and we choose $\pi_\1 = \id_\1$, which implies $\iota_\1 = \id_\1$.

The central Hopf monad is described for example in \cite[Ch.\,9.4]{Turaev:2017uxl}. $Z$ acts on objects as $Z(X) = \bigoplus_{U \in \Irr(\cC)} U^* \otimes X \otimes U$. We have
\begin{equation}
\begin{split}
     u = e_\1 : \1 \to Z(\1) 
     \ ,
     \quad & 
     Z_0 = \sum_{U \in \Irr(\cC)} \ev_U : Z(\1) \to \1  
     \ ,
     \\
     \Lambda = p_\1 : Z(\1) \to \1 \ ,
     \quad &
     \lambda = \sum_{U \in \Irr(\cC)} \dim(U) \,\widetilde{\coev}_U : \1 \to Z(\1) \ ,
\end{split}
\end{equation}
where $\dim(U)$ is the categorical dimension of $U$, and $e_\1 : \1 = \1^* \otimes \1 \hookrightarrow Z(\1)$ and $p_\1 : Z(1) \twoheadrightarrow \1^* \otimes \1 = \1$ are the embedding and projection of the direct summand $\1^* \otimes \1 \subset Z(\1)$. Note that with this normalisation indeed $\Lambda \circ \lambda = 1$ as required in \eqref{eq:int-coint-normalisation}. We further have
\begin{equation}
    \Lambda \circ u = 1
    ~~,\quad
    Z_0 \circ \lambda = \mathrm{Dim}(\cC) \ ,
\end{equation}
where $\mathrm{Dim}(\cC) = \sum_{U \in \Irr(\cC)} \dim(U)^2$ is the global dimension of $\cC$. In particular, $\Lambda \circ u \neq 0$ in accordance with Remark~\ref{rem:ssi-from-integral}. We also have the evident conclusion
\begin{equation}
    Z_0 \circ \lambda = 0 
    \quad \Leftrightarrow \quad 
    \mathrm{Dim}(\cC) = 0 \text{ in } \kk \ .
\end{equation}
This provides an example that $Z_0 \circ \lambda = 0$ can happen in semisimple categories over fields in finite characteristic. (In characteristic zero one necessarily has $\mathrm{Dim}(\cC) \neq 0$, see \cite[Thm.\,7.21.12]{EGNO}.) If $\cC = \mathrm{Rep}(H)$ is the category of representations of a semisimple spherical Hopf algebra, then $\mathrm{Dim}(\cC) = \dim_\kk(H)$, and so $Z_0 \circ \lambda = 0$ iff $\dim_\kk(H) = 0$ in $\kk$, cf.\ \cite[Ex.\,1.5]{CGPV23}.

To obtain the chromatic maps, 
as projective generator of $\cC$ we take $G := \bigoplus_{U \in \Irr(\cC)} U$. 
For any object $P \in \cC$, the chromatic map is then given by 
\begin{equation}
c_{P, G} = \bigoplus_{U \in \Irr(\cC)} \dim(U) \cdot \id_{P} \otimes \id_{U} \ ,
\end{equation}
see \cite[Ex.\,1.4]{CGPV23}.
Indeed, it is not hard to check that $c_{P,G}$ solves \eqref{eq:chromatic-via-cointegral}.
\end{remark}

\section{The handlebody invariant}

\subsection{Admissible skein modules}\label{sec:sk-mod}

In this section, we recall the definition of admissible skein modules, introduced in \cite{CGPV23}, and in \cite{Reutter:2020} under the name non-unital skein modules. For a general discussion of admissible skein modules, see also \cite{RST24}. Roughly speaking, the skein module of a surface is the vector space freely spanned by $\cC$-coloured graphs modulo relations induced by $\cC$ which hold inside embedded disks. If we require each graph to have at least one projectively coloured edge, we speak of admissible graphs and admissible skein modules.

\medskip

Let us go through the definition in more detail. We first need the notion of a $\partial$-marked surface.

\begin{definition}\label{def:d-marked-surface}
A \emph{$\bd$-marked surface} $(\Sigma,L)$ (over $\cC$) is a compact\footnote{
One can allow non-compact surfaces with a finite number of connected components, see \cite{RST24}, but we restrict ourselves to the compact case.}
oriented surface $\Sigma$ with boundary $\bd\Sigma$ (possibly empty), and a finite set $L$ of \textit{marked points}. A marked point is a triple $(p,\nu,X)$ where $p \in \bd\Sigma$, $\nu \in \{ \pm \}$, and $X \in \cC$. The points $p$ in $L$ are required to be mutually distinct, i.e., the map $L \to \bd\Sigma$ is injective.
\end{definition}

Let $(\Sigma,L)$ be a $\bd$-marked surface over $\cC$.
A \textit{blue graph} (over $\cC$), or a (blue) $\cC$-graph, in $(\Sigma,L)$, is a union of a finite collection of oriented arcs (embedded $[0,1]$), circles (embedded $\bS^1$) and vertices (embedded points). We refer to the arcs and circles as edges of the blue graph. The vertices, circles and the interior of the arcs (images of $(0,1)$) are mutually disjoint, and are contained in the interior of $\Sigma$. The endpoints of the arcs either lie on vertices of $\Gamma$ or on marked points on $\bd\Sigma$. In the latter case, the arc meets $\bd\Sigma$ transversally.
Each arc and each circle is labelled by an object of $\cC$. The endpoints of the arcs meeting a vertex $v$ are cyclically ordered by the orientation of $\Sigma$, and are partitioned into two consecutive sets of ingoing and outgoing endpoints, both equipped with a total order compatible with the cyclic order. The vertex is labelled by  a morphism in $\cC$, compatible with the in-- and outgoing endpoints of edges, their orientation, and their labels.
It will be convenient to represent the vertices as coupons, as in the usual string diagram notation:
\begin{equation}
\includegraphics[valign=c,scale=1]{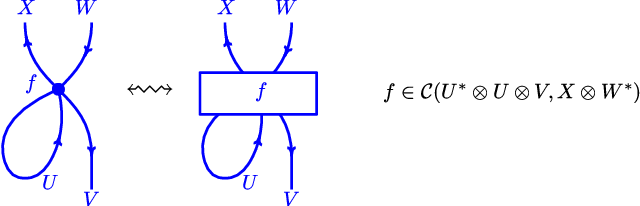}
\end{equation}
Each marked point $(p,\nu,X)$ on $\bd\Sigma$ is met by precisely one endpoint of one arc. This arc is labelled by $X$, and if $\nu=-$, the arc points away from $\bd\Sigma$ (i.e., the endpoint $0$ of $[0,1]$ is mapped to the marked point $p$), and if $\nu=+$ the arc points towards $\bd\Sigma$. 

\begin{definition}
    A blue graph $\Gamma$ on a $\bd$-marked surface $(\Sigma,L)$ is called \textit{admissible} if each connected component of $\Sigma$ contains at least one edge of $\Gamma$ labelled by an object in $\Proj_\cC$.
\end{definition}

Next we turn to skein relations. 
Let us write $\tilde{\sk}(\Sigma,L)$ for the $\kk$-linear span of all blue graphs on $\Sigma$, and $\tilde{\sk}_{\adm}(\Sigma,L)$ for the span of all admissible blue graphs.

Let $K \subset \Sigma$ be an embedded square $[0,1] \times [0,1]$ in the interior of $\Sigma$. 
We call $[0,1] \times \{0\}$ and $[0,1] \times \{1\}$ the in-- and outgoing base of $K$, respectively. Let $\Gamma$ be a blue graph which intersects the boundary of $K$ transversally and only at its bases. 
By evaluating the resulting graph in $K$ as a string diagram in $\cC$, we obtain a morphism
\begin{equation}
    F(\Gamma \cap K)
\end{equation}
in $\cC$ from the tensor product of objects labelling the edges intersecting the ingoing base to that for the outgoing base.
For example
\begin{equation}\label{eq:skein-K-example}
\includegraphics[valign=c,scale=0.8]{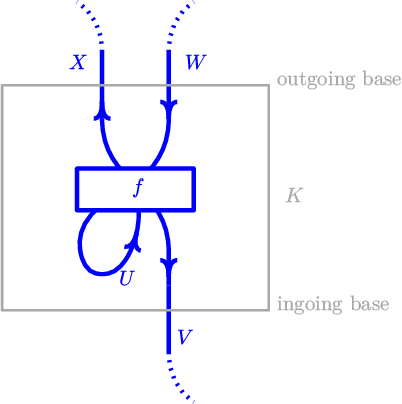}
F(\Gamma\cap K) = f \circ (\widetilde{\operatorname{coev}_U} \otimes \operatorname{id}_{V^*}): V \to X \otimes W^*
\end{equation}
This is well-defined because $\cC$ is assumed to be pivotal, see e.g., \cite[Ch.\,2]{Turaev:2017uxl} for more on the diagrammatic calculus for pivotal monoidal categories. 

A \textit{skein relation for $K$} is an element $\sum_{i=1}^{n} a_i\Gamma_i \in \tilde{\sk}(\Sigma,L)$ such that
\begin{enumerate}
\item $\Gamma_i$ only intersects the boundary of $K$ transversely and only at the bases of $K$;
\item the blue graphs agree outside of $K$: $\Gamma_i\cap (\Sigma\setminus K) = \Gamma_j\cap (\Sigma\setminus K)$ for $i, j = 1, ..., n$;
\item $\sum_{i=1}^{n} a_i F(\Gamma_i \cap K) = 0$ as a morphism in $\cC$.
\end{enumerate}
A skein relation for $K$ is called \textit{admissible}, if one (hence every) $\Gamma_i$ is admissible and the connected component of $\Sigma$ containing $K$ has a $\Proj_\cC$-coloured edge not completely contained in $K$. Equivalently, $\Gamma_i$ is admissible in $\Sigma \setminus K$. 

Let $\mathcal{N}(\Sigma,L) \subset \tilde{\sk}(\Sigma,L)$ be the span of all skein relations for $K$ as we vary over all choices of $K$. 
Analogously $\mathcal{N}_{\adm}(\Sigma,L) \subset \tilde{\sk}_{\adm}(\Sigma,L)$ is the span of all admissible skein relations. 

\begin{definition}\label{def:sk-adm}
Let $(\Sigma,L)$ be a $\bd$-marked surface.
The \textit{admissible skein module} (over $\cC$) for $(\Sigma,L)$ is the quotient space
$$
    \sk_{\adm}(\Sigma,L) = \tilde{\sk}_{\adm}(\Sigma,L) / \mathcal{N}_{\adm}(\Sigma,L) \ .
$$
We write $[\Gamma] \in \sk_{\adm}(\Sigma,L)$ for the class of a blue graph $\Gamma$. 
\end{definition}

Admissible skein modules can be generalised to other subcategories than $\Proj_{\cC}$ \cite{CGP23,RST24}, but we will not need that here.
It follows from \cite[Prop.\,5.6]{RST24} (see also \cite[Thm.\,2.3]{CGP23} for the case $L=\emptyset$) that: 

\begin{prop}\label{prop:finite-1}
The skein module $\sk_{\adm}(\Sigma, L)$ is finite dimensional over $\kk$ for all $\bd$-marked surfaces $(\Sigma, L)$.
\end{prop}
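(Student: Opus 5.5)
The plan is to reduce to a finite spanning set by using a handle decomposition of $\Sigma$ together with the finiteness of $\cC$. Since $\sk_{\adm}$ of a disjoint union is the tensor product of the skein modules of the components, we may assume $\Sigma$ is connected. If $\Sigma$ is closed, we remove a small open disc $D$ that avoids $\Gamma$ (after an isotopy). This does not lose information: any admissible graph is supported in $\Sigma\setminus D$, and skein relations in squares $K$ can be isotoped off $D$. We therefore get a surjection from the module of $\Sigma\setminus D$ with no marked points onto the module of $\Sigma$. So it suffices to treat surfaces with nonempty boundary.

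For connected $\Sigma$ with $\bd\Sigma\neq\emptyset$, we choose a ribbon-graph (spine) decomposition: $\Sigma$ deformation retracts onto a disc $D_0$ with $n$ bands attached, and the marked points are pushed into $\bd D_0$. Every graph $\Gamma$ can be isotoped so that it meets each band in parallel strands crossing its cocore transversally. Inside each band we then use skein relations to fuse the parallel strands into a single strand labelled by the tensor product of their labels. Inside $D_0$, which is a disc, the whole graph evaluates to a single coupon. Any admissible graph is thereby equivalent to a \emph{standard graph}: one coupon in $D_0$, one edge through each band with labels $X_1,\dots,X_n$, and edges running to the marked points.

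Next we make the labels projective and finite in number. Admissibility gives a projective edge $P$ somewhere. Using relations in a square containing that edge together with part of $D_0$, we can slide $P$ into $D_0$. There we absorb it so that the single coupon factors through $P\otimes(\cdots)$. By the tensor ideal property every object tensored with $P$ is projective, and via $\id_P=$ (trace-duality) we can thread the projective strand so that each band edge becomes projective. Concretely, we insert $\coev$ pairs of $P$ along each band, using that the partial trace of a projective-factoring map can be rerouted. Every projective object is a summand of $G^{\oplus m}$. Using $\id_Y=\sum_j s_j\circ r_j$ with $Y$ a summand of some $G^{\oplus m}$, and then splitting over direct summands in a square, we may assume each band edge is labelled $G$, or one of finitely many indecomposable projectives $P_1,\dots,P_k$.

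At that point the admissible skein module is spanned by standard graphs whose band labels come from a finite set. For fixed labels, the coupon ranges over the finite-dimensional space $\Hom_\cC\big(\1,\ \bigotimes_i (Y_i\otimes Y_i^*)\otimes \bigotimes_{L} X^{\pm}\big)$, and the graph depends linearly on the coupon, which gives a finite-dimensional span. The main obstacle is the step that makes every band label projective while keeping each relation admissible (the relation square must not contain the only projective edge). I would handle this as \cite{RST24,CGP23} do: first move a projective edge to a fixed arc in $D_0$ disjoint from all squares used afterwards, and only then perform the band manipulations.
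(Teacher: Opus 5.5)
The paper does not prove this statement itself; it quotes \cite[Prop.\,5.6]{RST24}, and \cite[Thm.\,2.3]{CGP23} for $L=\emptyset$. Your band-decomposition strategy can be made to work, but as written it has a genuine gap, at exactly the step you flag.

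In step 3 you collapse $D_0$ to a single coupon, and in step 4 you ``absorb'' the projective edge into that coupon. Whenever no projective strand crosses a band, both are skein relations in a square that contains every projective edge. They are therefore not admissible, and they give wrong answers. For instance, take $\Gamma=I$, a $P_\1$-edge between coupons $\iota_\1,\pi_\1$, lying in $D_0$. Collapsing yields the coupon $\pi_\1\circ\iota_\1=0$ for non-semisimple $\cC$. Yet on $\bS^2$, which your first reduction turns into a disc, $\cgpt'([I])=\mtr_{P_\1}(\iota_\1\circ\pi_\1)=1$.

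After absorption, the fact that the coupon factors through $P\otimes(\cdots)$ is no longer visible in $\sk_{\adm}$. So ``threading via trace duality'' or ``rerouting a partial trace'' has nothing to act on. Inserting a $\coev$/$\ev$ pair of $P$ on a band is an identity only if it is a zigzag of an already existing $P$-strand. Your closing remark about parking a projective segment keeps later relations admissible. It does not explain how the band labels become projective, nor what the final spanning set is.

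The missing ingredient is topological, and it must happen \emph{before} any collapsing:
\begin{enumerate}
\item Push a segment of the projective edge by finger moves along a path that runs into each band $B_i$ and across its cocore. Where the path meets other strands, nest the fingers, exactly as in the red-blue modification, so this is a genuine isotopy. Now every cocore is crossed by a projective strand.
\item Fuse the strands in each band. This is admissible, and by the tensor-ideal property it gives projective band labels.
\item Only now collapse $D_0$ to one coupon. This is admissible because the projective band edges lie outside that square.
\end{enumerate}
From here your reduction to $G$-labels and your finite-dimensional coupon space go through.

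This still leaves the case with no bands, a disc with marked points, which includes $\bS^2$ after your first reduction. There no band edge can carry admissibility, so you must keep an explicit projective loop. Collapse the complement of a small disc around a point of a projective edge together with a thin strip from it to $\bd D$, fusing the crossed strands with that edge. This shows the skein module is spanned by graphs consisting of a coupon in a space of the form $\Hom_\cC(G, X\otimes G)$, where $X$ is the tensor product of the boundary labels, with its $G$-output closed up to its $G$-input.
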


\begin{remark}
Of course one can also define skein modules without the admissibility condition in the same way as $\sk(\Sigma,L) = \tilde{\sk}(\Sigma,L) / \mathcal{N}(\Sigma,L)$. If $\cC$ is semisimple, $\sk(\Sigma,L)= \sk_{\adm}(\Sigma,L)$. For non-semisimple $\cC$, the $\sk(\Sigma,L)$ are less well behaved than $\sk_{\adm}(\Sigma,L)$. For example, for $\Sigma$ an annulus, $\sk(\Sigma,\emptyset)$ can be infinite dimensional \cite[Rem.\,5.10]{RST24}, while $\sk_{\adm}(\Sigma,L)$ is always finite-dimensional, as we saw in the previous proposition. 
\end{remark}

Admissible skein modules on the 2-sphere are dual to (two-sided) modified traces. Namely, for $P \in \Proj_\cC$ and $f : P \to P$, let $\Lambda_f$ be the blue graph in $\bS^2$ formed of a single vertex $v$ labelled $f$ and a single edge from $v$ to $v$ labelled $P$. We have (see \cite[Thm.\,2.4]{CGPV23}, and \cite[Prop.\,5.5]{RST24} for a proof based on excision):

\begin{prop}\label{prop:mod-tr-and-skein}
Let $\psi \in \sk_{\adm}(\bS^2, \emptyset)^*$. For $P \in \Proj_{\cC}$ write $\psi_P : \End_{\cC}(P) \to \kk$ for the map $f \mapsto \psi([\Lambda_f])$. Then $(\psi_P)_{P \in \Proj_{\cC}}$ is a two-sided modified trace on $\Proj_{\cC}$ and all such traces are of this form.
\end{prop}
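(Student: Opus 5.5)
The plan is to prove the two directions separately, using the universal property of admissible skein modules as quotients by local relations.

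\emph{Step 1: every $\psi$ gives a modified trace.} Fix $\psi \in \sk_{\adm}(\bS^2,\emptyset)^*$ and set $\psi_P(f) := \psi([\Lambda_f])$, which is linear in $f$.

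For cyclicity, take $f : P \to Q$ and $g : Q \to P$ with $P,Q$ projective. Consider the graph on $\bS^2$ with two coupons $f$ and $g$ joined by edges labelled $P$ and $Q$. Applying a skein relation in a square containing $g$ and one of the edges, we merge the coupons into a single coupon $f\circ g$. Applying it instead in a square around the other edge gives $g \circ f$. Both relations are admissible, because the edge outside the square is projectively coloured. Hence $[\Lambda_{f\circ g}] = [\Lambda_{g\circ f}]$, and so $\psi_Q(f\circ g)=\psi_P(g\circ f)$.

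For the right partial trace property, take $h \in \End(P\otimes X)$ with $P$ projective and $X$ arbitrary. The graph $\Lambda_h$ is isotopic to a single coupon $h$ on $\bS^2$ with a $P$-loop and an $X$-loop. Choose a square $K$ containing the coupon and the $X$-loop, but meeting the $P$-loop only transversally at its bases. Evaluating the string diagram in $K$ gives the right partial trace $\mathrm{ptr}_X(h) \in \End(P)$. This is an admissible skein relation, since the $P$-edge leaves $K$, so $[\Lambda_h]=[\Lambda_{\mathrm{ptr}_X(h)}]$. The left partial trace is handled the same way with the $X$-loop on the other side. The point is that on $\bS^2$ an $X$-strand closing on either side can be isotoped across the sphere, which is exactly why we obtain a two-sided trace.

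\emph{Step 2: every two-sided modified trace arises this way.} Given $\mtr$, define $\tilde\psi$ on $\tilde{\sk}_{\adm}(\bS^2,\emptyset)$ by cutting each admissible graph $\Gamma$ open along a projective edge. Concretely, choose a point $x$ on a $\Proj_\cC$-coloured edge labelled $P$. Cutting there and using a disk $\bS^2 \setminus \{\text{pt}\}$, read $\Gamma$ as a string diagram $F_x(\Gamma) \in \End(P)$, and set $\tilde\psi(\Gamma)=\mtr_P(F_x(\Gamma))$.

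Independence of choices has three parts. The choice of the puncture point in the complement is handled by sphericality, i.e.\ the two-sided partial trace property: moving a strand around the back of the sphere corresponds to replacing a left partial trace by a right one. Moving $x$ along the same edge or through coupons is cyclicity. Moving $x$ to a different projective edge $Q$ uses the partial trace property again, by expressing $F_x(\Gamma)$ as a partial trace of an endomorphism of $P\otimes Y$ or $Q \otimes Y'$ obtained by cutting at both points.

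It then remains to show that $\tilde\psi$ kills $\mathcal{N}_{\adm}$. Given an admissible skein relation in a square $K$, choose $x$ on a projective edge outside $K$. After an isotopy, $F_x(\Gamma_i)$ all factor through the same diagram with $F(\Gamma_i\cap K)$ inserted, so the linear combination vanishes. The induced $\psi$ on $\sk_{\adm}$ restricts to $\mtr$ on $\Lambda_f$.

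\emph{Main obstacle.} I expect the main obstacle to be the independence of the cut point in Step 2, in particular the passage between different projective edges and the isotopy across the point at infinity. This requires organising graph isotopies on $\bS^2$ into moves that correspond exactly to cyclicity and to left/right partial traces. For this I would first reduce to graphs in normal form by a planar isotopy and then check elementary moves. Alternatively, this part can be quoted from \cite[Thm.\,2.4]{CGPV23} or obtained by excision as in \cite[Prop.\,5.5]{RST24}.
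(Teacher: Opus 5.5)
Your proposal is correct and follows the same route as the paper. The paper does not prove the statement itself but cites \cite[Thm.\,2.4]{CGPV23} (and \cite[Prop.\,5.5]{RST24}), and for the converse it uses exactly your construction: cut $\bS^2$ open at a point of a projective edge, evaluate the graph in the complement of a small disc as an endomorphism of $P$, and apply $\mtr_P$, with well-definedness taken from \cite[Thm.\,5]{GPV13} and compatibility with admissible skein relations from \cite[Thm.\,2.4]{CGPV23}. Your Step~1 is the routine skein-theoretic check behind the cited result; the only small imprecision is that passing from the single $(P\otimes X)$-edge of $\Lambda_h$ to parallel $P$- and $X$-loops is an admissible skein relation rather than an isotopy, which does not affect the argument.
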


Conversely, a linear form on $\sk_{\adm}(\bS^2, \emptyset)$ can be obtained from a modified trace $\mtr$ as follows. 
Let $\Gamma$ be an admissible blue graph on $\bS^2$. Pick a point $p$ on an edge $e$ labelled by a projective object $P$. Pick a small enough open disc $D$ around $p$, so that it intersects $e$ in a single arc. The complement of $D$ is topologically a square $K$, whose in- and outgoing base are intersected once by the edge $e$, so that $F(\Gamma \cap K)$ is a morphism $P \to P$, which we can then evaluate with $\mtr_P$.
It is proved in \cite[Thm.\ 5]{GPV13} that this is a well-defined isotopy invariant for admissible blue graphs on $\bS^2$, and in \cite[Thm.\,2.4]{CGPV23} that it respects admissible skein relations. Altogether, we get a linear form
\begin{equation}\label{eq:sph-inv}
\cgpt': \sk_{\adm}(\bS^2, \emptyset) \to \kk\,,\ \Gamma \mapsto \cgpt'(\Gamma) := \mtr_P(F(\Gamma \cap K)) \ .
\end{equation}
From here on, $\cgpt'$ will be the linear form corresponding to the choice of modified trace $\mtr$ we made in Section~\ref{sec:spherical}.

\begin{lemma}\label{lem:vanish-S2}
Suppose $\cC$ is non-semisimple.
If $[\Gamma] \in \sk_{\adm}(\bS^2, \emptyset)$ is the union of two disjoint graphs $\Gamma = \Gamma_1 \sqcup \Gamma_2$ such that each of $\Gamma_{1,2}$ contains a projectively coloured edge, then $\cgpt'([\Gamma])=0$.
\end{lemma}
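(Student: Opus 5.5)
Since $\Gamma_1$ and $\Gamma_2$ are disjoint graphs on $\bS^2$, we can separate them by an embedded circle in the complement of $\Gamma$. The plan is to use admissible skein relations to rewrite $\Gamma$, on the disc containing $\Gamma_2$, as a graph in which $\Gamma_2$ has been replaced by something factoring through $\1$. The key input is that for non-semisimple $\cC$ the identity of $\1$ cannot factor through a projective object.

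Choose a disc $D \subset \bS^2$ containing $\Gamma_2$ and disjoint from $\Gamma_1$. Viewing $D$ as a square $K$ with empty in- and outgoing bases, $F(\Gamma_2 \cap K) = F(\Gamma_2)$ is an endomorphism of $\1$, so it equals $a\cdot\id_\1$ for some $a \in \kk$. Because $\Gamma_1$ contains a projectively coloured edge outside $K$, the relation $\Gamma - a\,\Gamma_1$ (where $\Gamma_1$ means $\Gamma$ with $\Gamma_2$ erased) is an admissible skein relation. Hence $[\Gamma] = a[\Gamma_1]$ in $\sk_{\adm}(\bS^2,\emptyset)$, and it suffices to show $a=0$.

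To show $a = 0$, cut the projectively coloured edge $e$ of $\Gamma_2$, labelled by $P \in \Proj_\cC$. Then $F(\Gamma_2) = g\circ f$, where $f:\1 \to P\otimes P^*$ (or $\1\to P^*\otimes P$) comes from closing the rest of $\Gamma_2$ and $g$ is the corresponding evaluation-type map. More simply, $F(\Gamma_2)$ is the categorical trace of an endomorphism of $P$, so it factors as $\1 \to P\otimes P^* \to \1$. Since $P\otimes P^*$ is projective, $a\cdot\id_\1$ factors through a projective object. If $a\neq 0$, then $\1$ would be a direct summand of a projective object and hence projective. By the list of equivalent characterisations in Section~\ref{sec:spherical}, $\cC$ would then be semisimple, a contradiction. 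So $a=0$ and $\cgpt'([\Gamma])=0$.

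The only point needing care is the admissibility of the skein relation used. This holds precisely because $\Gamma_1$ supplies a projective edge outside $K$, which is why the lemma needs both $\Gamma_1$ and $\Gamma_2$ to contain projectively coloured edges. The factorisation argument itself is routine.
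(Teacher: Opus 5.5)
Your proposal is correct and follows the paper's proof. An admissible skein relation on a disc around $\Gamma_2$ (admissible because of the projective edge of $\Gamma_1$) reduces $[\Gamma]$ to a multiple of $F(\Gamma_2)\in\End(\1)$, and this vanishes because it factors through a projective object while $\1$ is not projective; the paper phrases the last step as a factorisation through $P_\1$, i.e.\ $\pi_\1\circ\iota_\1=0$.

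One small caveat, which the paper glosses over as well: disjoint $\Gamma_1,\Gamma_2$ need not be separable by a circle, since for example $\Gamma_2$ may have components on both sides of a circle of $\Gamma_1$. You can instead let $c$ and $c'$ be two distinct projectively coloured connected components of $\Gamma$, let $U$ be the component of $\bS^2\setminus c$ containing $c'$, and take $K=\overline{\bS^2\setminus D'}$ with $D'\subset U$ a closed disc containing $\Gamma\cap U$ in its interior. Then $K$ contains $c$ and avoids $c'$, and your argument goes through unchanged.
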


\begin{proof}
As $\Gamma_1$ contains a projectively coloured edge, we can apply a skein relation to a square $K$ which entirely contains $\Gamma_2$. Since $\Gamma_2$ contains a projectively labelled edge, the evaluation $F(\Gamma_2) : \1 \to \1$ then can be written as a composition $\1 \to P_\1 \to \1$, which is always zero for non-semisimple  $\cC$.
\end{proof}

\subsection{Handlebody invariants}\label{sec:handlebody-inv}

The invariant $\cgpt'$ from \eqref{eq:sph-inv} can be extended to higher genus surfaces by identifying the surface with the boundary of a handlebody. To characterise the extension, we first need to recall the \textit{cutting move} from \cite{CGPT20-Kup}. 

By a multi-handlebody $H$ we mean a finite disjoint union of 3-dimensional handlebodies.
Let $\Gamma$ be an admissible blue graph on $\bd H$, that is, $\Gamma \in \tilde{\sk}_{\adm}(\bd H)$, where we abbreviate $\tilde{\sk}_{\adm}(\Sigma) = \tilde{\sk}_{\adm}(\Sigma,\emptyset)$ as $\bd H$ is closed and so there are no marked boundary points.

Let $D \subset H$ be an oriented properly embedded disk whose boundary $\bd D\subset \bd H$ does not meet the vertices of $\Gamma$ and intersects the strands of $\Gamma$ transversely in a non-empty set containing at least one $\Proj_\cC$-coloured strand. We call $D$ a \textit{cutting disc in $H$}.
Denote by $\cut_D(H)$ the multi-handlebody obtained by cutting $H$ along $D$. 
Cut along $\bd D$ the edges of $\Gamma$ which intersect $\bd D$, say, labelled by $X_1, ..., X_n$. Then, attach the cut points to two new coupons in $\bd\cut_D(H)$, one on each side of the cut. 
By assumption, $P := X_1 \ot \cdots \ot X_n \in \Proj_\cC$, which means we can colour the pair of coupons joining the two sets of the cut points by a dual basis pair as in the definition of $\Ld^{\mtr}_{P}$ in \eqref{eq:Lam-t-P-def}, see Figure~\ref{fig:cut-1}.
In this way, we obtain an admissible graph
\begin{equation}
    \cut_D(\Gamma) \in \tilde{\sk}_{\adm}(\bd \cut_D(H)) \ .
\end{equation}

\begin{figure}[tb]
\begin{center}
$\begin{tikzcd}[column sep = 5em]
\includegraphics[valign=c, scale=0.6]{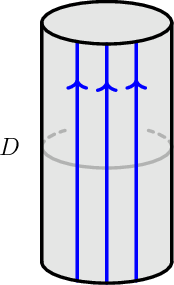}
\ar[r, "\mathrm{Cutting}"]&
\sum\limits_{i}\ \includegraphics[valign=c, scale=0.6]{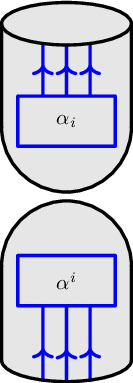}
\end{tikzcd}$
\caption{The cutting move along the disk $D$.}
\label{fig:cut-1}
\end{center}
\end{figure}

The next theorem is shown in \cite[Thm.\,4.1\,\&\,4.3]{CGP23} and is the key ingredient on which the construction of the open-closed TFT below will be based.

\begin{thm}\label{thm:CGP23}
Let $(\cC,\mtr,\pi_\1)$ be as in \eqref{eq:oc-invariant-alg-input}.
There is a unique collection of maps
\begin{equation}\label{eq:FH-def}
\cgpt := \{\cgpt(H,-): \sk_{\adm}(\bd H) \to \kk\}     \ ,
\end{equation}
where $H$ ranges over multi-handlebodies 
such that 
\begin{enumerate}
\item 
the values of {$\cgpt(H, [\Gamma])$} only depend on the orientation preserving diffeomorphism classes of elements $(H,[\Gamma])$; 
\item 
$\cgpt({\bD^3}, -) = \cgpt'$ from \eqref{eq:sph-inv} as linear maps $\sk_{\adm}(\bS^2, \emptyset) \to \kk$; 
\item $\cgpt$ is multiplicative with respect to disjoint unions; 
\item $\cgpt$ is invariant under the cutting move, that is, for $\Gamma \in \tilde{\sk}_{\adm}(\bd H)$ and $D$ a cutting disc, we have
\begin{equation*}
\cgpt(H, [\Gamma]) =
\cgpt( \cut_D(H), [\cut_D(\Gamma)] )\ .
\end{equation*}
\end{enumerate}
\end{thm}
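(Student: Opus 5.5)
The plan is to construct $\cgpt$ directly and then show that the axioms pin it down. The idea is to reduce every multi-handlebody to a disjoint union of balls by repeated cutting moves.

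\textbf{Uniqueness.} For a connected handlebody $H$ of genus $g$ with admissible $\Gamma$, choose a system of $g$ meridian discs $D_1,\dots,D_g$ that cut $H$ into a ball. After an isotopy of $\Gamma$ in $\bd H$, and after adding a projective strand if needed, we can assume each $\bd D_i$ meets a $\Proj_\cC$-coloured strand. One way to arrange this is to slide a projective edge of $\Gamma$ across each $\bd D_i$; another is to fuse a projective strand with a parallel strand, so that the tensor product is projective. The latter uses that $\Proj_\cC$ is a tensor ideal.

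Applying properties (4), (3) and (2) then expresses $\cgpt(H,[\Gamma])$ as a finite sum of values of $\cgpt'$ on admissible graphs on $\bS^2$. So $\cgpt$ is determined by $\cgpt'$, and uniqueness follows. Disconnected $H$ are handled by (3).

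\textbf{Existence.} Define $\cgpt(H,\Gamma)$ by the formula above for a chosen cut system, and show it is independent of all choices. The steps, in order, are as follows.

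1. The formula descends to admissible skein classes. An admissible skein relation supported in a square $K$ can be isotoped off the discs $D_i$, and there it becomes a skein relation on $\bS^2$, which $\cgpt'$ respects.

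2. The result is independent of the dual basis chosen in $\Lambda^\mtr_P$. This follows from the basis-independence of $\Lambda^\mtr_P$.

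3. The result is independent of the isotopy of the discs, and of which projective strand meets them. Here I use the naturality of $\Lambda^\mtr_\bullet$ from Lemma~\ref{lem:Lam-natural}: sliding a coupon pair along an edge commutes with morphisms.

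4. The result is independent of the cut system. Any two cut systems of a handlebody are related by disc slides (handle-slides of meridian discs) and isotopies, by the classical result that the disc complex is connected via such moves. So it suffices to check invariance under a single disc slide.

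5. Full cutting invariance (4) for an arbitrary cutting disc $D$. Complete $D$ to a cut system, which is possible after slides if $D$ is non-separating. Separating discs are handled by decomposing into a disjoint union of handlebodies and using (3).

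6. Diffeomorphism invariance (1). A diffeomorphism carries a cut system to a cut system, and on balls the value $\cgpt'$ is an isotopy invariant on $\bS^2$.

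\textbf{Main obstacle.} The crux is step 4, the disc-slide invariance. After sliding $D_1$ over $D_2$, the two computations differ by how the coupons $\sum_i \alpha_i,\alpha^i$ for $D_1$ and $D_2$ are arranged on the resulting sphere. I would first reduce to the genus-two local picture. There I would use the partial trace property of the two-sided modified trace, together with the non-degenerate pairing of Theorem~2.1, to show that cutting along $D_1$ and then $D_2$ equals cutting along $D_2$ and the slid disc.

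Concretely, the key identity is the following: inserting $\Lambda^\mtr_{P\otimes Q}$ across a fused strand equals inserting $\Lambda^\mtr_P\otimes$ (appropriate) on one of the factors after re-cutting. This should follow from $\Lambda^\mtr_\bullet$ being a natural endomorphism of the identity and from the sphere-level cyclicity of $\mtr$ in Proposition~\ref{prop:mod-tr-and-skein}.

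I expect the bookkeeping ensuring that every intermediate disc meets a projective strand to require care, possibly by first stabilising $\Gamma$ near a projective edge.
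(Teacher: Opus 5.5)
The paper does not prove this theorem; it quotes it from the literature. Your overall architecture is the natural one: uniqueness by cutting down to balls, and existence by defining the value through a complete meridian system and then proving independence of choices. The uniqueness half is fine, with one correction: you should not ``add a projective strand'', since that changes the skein class. A finger move of $\partial D_i$ across a projective edge already makes each $D_i$ a cutting disc.

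The genuine gap is in existence, exactly at the step you call the crux, and the mechanism you suggest for it does not work. An identity of the form `$\Lambda^\mtr_{P\otimes Q}$ equals $\Lambda^\mtr_P\otimes(\dots)$' cannot hold in general, because $\Lambda^\mtr_\bullet$ is a natural endomorphism of the identity functor but not a monoidal one. For example, in a spherical fusion category with the categorical trace, $\Lambda^\mtr_U=0$ for simple $U\not\cong\1$, while $\Lambda^\mtr_{U\otimes U^*}\neq 0$. Moreover, the cutting move does not insert a copy of $\Lambda^\mtr_P$. It inserts the canonical element $\sum_i \alpha^i\otimes\alpha_i$, split between two different boundary components, so a genus-two manipulation of $\Lambda^\mtr$ is the wrong tool.

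The missing idea is the cutting move on a single ball. Suppose an admissible graph on $\bS^2$ is split by a circle into halves $f\colon\1\to P$ and $g\colon P\to\1$, where $P$ is the projective tensor product of the strands meeting the circle.
\begin{itemize}
\item The partial trace property gives $\cgpt'=\mtr_P(f\circ g)$.
\item By non-degeneracy and $\mtr_P(\alpha^i\circ\alpha_j)=\delta_{i,j}$, you can write $g=\sum_i\mtr_P(\alpha^i\circ g)\,\alpha_i$.
\item Hence $\mtr_P(f\circ g)=\sum_i\mtr_P(f\circ\alpha_i)\,\mtr_P(\alpha^i\circ g)$, which is exactly property (4) for $\bD^3$.
\end{itemize}

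With this lemma, disc-slide invariance needs no local computation at all. The slid disc $D_1'$ is disjoint from $D_1,D_2,\dots,D_g$, so cutting along $\{D_1,D_1',D_2,\dots,D_g\}$ gives two balls. The computation via $\{D_1,\dots,D_g\}$ and the one via $\{D_1',D_2,\dots,D_g\}$ each differ from this by a single cut of a ball along a separating disc. Before cutting, do finger moves of $\partial D_1'$ away from the other discs so that it meets a projective strand.

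The same lemma is what your step 5 silently needs for a separating $D$. The union of meridian systems of the two pieces cuts $H$ into one ball in which $D$ is separating, so multiplicativity (3) alone does not give the cutting relation. Without this ball identity, independence of the cut system is not established, and hence neither is existence.
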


\begin{exmp}
Let $\cC$ be non-semisimple, and let $P \in \Proj_\cC$. 
Let $H$ be a solid torus and let $\Gamma_c$ and $\Gamma_n$ be $P$-coloured circles embedded in $\partial H$ such that $\Gamma_c$ is contractible in $H$ and $\Gamma_n$ is not:
\begin{equation}
\includegraphics[valign=c,scale=0.8]{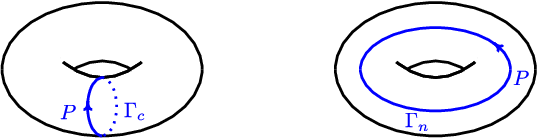}
\end{equation}
Then 
\begin{equation}
\cgpt(H, \Gamma_c) = 
\cgpt(H, \includegraphics[valign=c,scale=0.8]{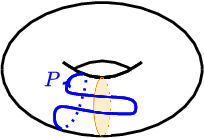})
=
\sum_{j}\cgpt(\bD^3, \includegraphics[valign=c,scale=0.4]{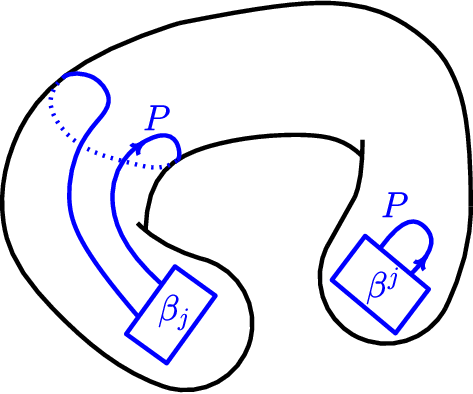})
= 0
\end{equation}
where the second step is the cutting move, and the third step is Lemma~\ref{lem:vanish-S2}. On the other hand,
\begin{equation}
\begin{split}
&\cgpt(H, \Gamma_n) = \cgpt(H, \includegraphics[valign=c,scale=0.8]{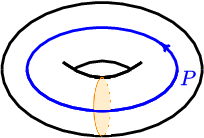}) = \sum_{i}\cgpt(\bD^3, \includegraphics[valign=c,scale=0.4]{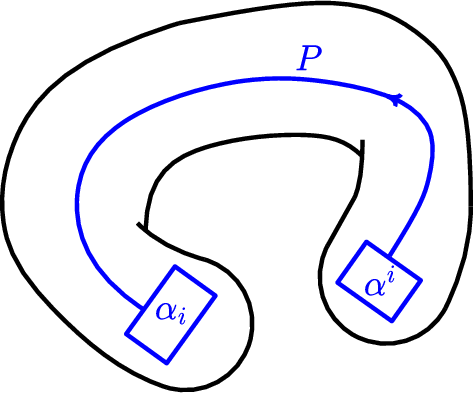})\\
=\ & \mtr(\Lambda_{P}^{\mtr}) = \dim_{\kk}\cC(P, \1)
\end{split}    
\label{eq:solid-torus-blue-non-contractible}
\end{equation}
where the last equality is \eqref{eq:trace-Lambda}.
\end{exmp}

\subsection{Skein modules and handlebody invariants for bichrome graphs}
A \emph{bichrome graph} in a closed surface $\Sigma$ is the disjoint union of a blue graph in $\Sigma$ (the ``blue part'') and finitely many pairwise disjoint unoriented embedded curves in $\Sigma$ (the ``red part'').

\begin{definition}\label{def:adm-bichrome}
A bichrome graph $\Gamma$ on a closed surface $\Sigma$ is \textit{admissible} if its blue part is admissible (each connected component of $\Sigma$ contains an edge of the blue graph labelled by an object in $\Proj_\cC$).
We write $\tilde{\sk}^{\mathrm{bi}}_{\adm}(\Sigma)$
for the linear span of admissible bichrome graphs on $\Sigma$.
\end{definition}

Next we define a surjective linear map $\mathrm{RB} : \tilde{\sk}^{\mathrm{bi}}_{\adm}(\Sigma) \to \sk_{\adm}(\Sigma)$ (``red-to-blue map'').
Recall the definition of the chromatic space $C_{P,G}$ in \eqref{eq:CPG-def}.
A \textit{red-blue modification} of an admissible bichrome graph is a modification in an annular neighbourhood of a red curve $\omega$ of the following form 
\begin{equation}\label{eq:red-to-blue-mod}
\includegraphics[valign=c,scale=0.9]{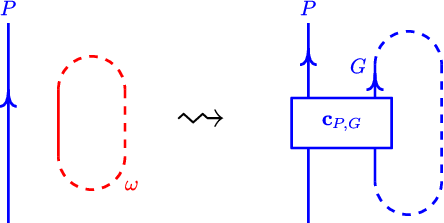}
\end{equation}
where $P \in \Proj_\cC$, $G$ is a projective generator, and $c_{P,G} \in C_{P,G}$.
Here $P$ can stand for a tensor product of several edge labels if the path connecting $\omega$ to the projectively labelled edge crosses other blue edges. In more detail, a red-blue modification depends on the following choices: 
1) a point $p$ on a red curve $\omega$ and a point $q$ on a blue edge labelled by a projective object $P$; 
2) an embedded path $\gamma : [0,1] \to \Sigma$ with $\gamma(0)=p$ and $\gamma(1)=q$ used to isotope the $P$-labelled edge (and the other edges it crosses) close to $p \in \omega$ ($\gamma$ is allowed to cross blue edges but not red curves); 
3) a choice of projective generator $G$ and a choice of chromatic map $c_{G,Q} \in C_{G,Q}$, where $Q = X_1 \otimes \cdots \otimes X_n \otimes P$ with $X_1,\dots,X_n$ labels of other blue strands crossed by $\gamma$.

We can use red-blue modifications to iteratively convert a bichrome graph $\Gamma$ into a purely blue graph  as follows: Define the \textit{depth} of a red curve $\omega \subset \Gamma$ as the minimal number of red curves a path from a point on $\omega$ to a point on a projectively labelled blue edge of $\Gamma$ needs to cross. Then apply red-blue modifications to all depth-0 red curves and repeat the process.
We denote the resulting admissible blue graph by $\Gamma_{\mathrm{rb}}$.
Of course, $\Gamma_{\mathrm{rb}}$ depends on many arbitrary choices, but it is shown in \cite[Lem.\,2.6]{CGPV23} and \cite[Prop.\,5.16]{costantino2026gradedsphericalskein21ghqft} that its image $[\Gamma_\mathrm{rb}] \in \sk_{\adm}(\Sigma)$ does not:

\begin{lemma}
$[\Gamma_\mathrm{rb}] \in \sk_{\adm}(\Sigma)$ only depends on $\Gamma$ and not on the order and choices made in the red-blue modifications during the above iterative procedure.    
\end{lemma}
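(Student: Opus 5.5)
We will establish independence of each choice separately, working throughout in $\sk_{\adm}(\Sigma)$, where we may apply admissible skein relations freely. A single red–blue modification depends on three pieces of data: (a) the choice of chromatic map $c_{Q,G}\in C_{Q,G}$ for fixed $G$, $Q$; (b) the choice of projective generator $G$; and (c) the points $p$, $q$ and the path $\gamma$. The iterative procedure adds a fourth: (d) the order in which the red curves are converted.

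For (a) and (b), we will use that the result is an encircling configuration around the $Q$-strand. By the chromatic identity \eqref{eq:chromatic-def} together with the non-degeneracy of the modified trace pairing, the morphism obtained is characterised by its behaviour after composing with arbitrary $X$-strands threading through the red curve. In the skein module, every strand passing through $\omega$ can, after isotopy in a neighbourhood of $\omega$, be viewed as such an $X$. Concretely, we plan to show that replacing $c$ by $c'$ changes the graph by an admissible skein relation. We first use \eqref{eq:chromatic-def} to rewrite both diagrams as the same expression involving $\Lambda$ and the strands crossing the annulus, so the choice only enters through $\Lambda$, which is canonical. For $G$ versus $G'$, we would pass through $G\oplus G'$, using naturality of $\partial_{X,V}$ in $V$ to project onto summands.

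For (c), we will move $q$ along a projective edge and move $\gamma$ by isotopies. When $\gamma$ crosses a blue vertex or changes which strands it bundles into $Q$, we use that $Q\otimes X$ is again projective. We also use a \emph{sliding} property of chromatic maps: a chromatic map based on $Q\otimes X$ can be traded for one based on $Q$ with $X$ passing through the encircling $G$-loop. This property follows again from \eqref{eq:chromatic-def}, since both sides satisfy the same defining equation. Changing $q$ to a different projective edge in the same component is then handled by dragging along paths that avoid red curves.

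For (d), commuting two depth-0 modifications is immediate, since they are supported in disjoint annuli after choosing disjoint paths. The delicate point is that converting one red curve can lower the depth of others, so different orders produce genuinely different intermediate graphs. We expect this order-independence, together with the sliding statement in (c), to be the main obstacle. Our strategy is to show that whenever a red curve $\omega'$ can be reached through an already converted curve $\omega$, the result is the same as reaching $\omega'$ directly. For this we use the fact that the blue $G$-strand created from $\omega$ is itself projective, combined with the sliding property. An induction on the number of red curves then finishes the argument.
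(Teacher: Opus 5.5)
The paper gives no proof of this lemma; it cites \cite[Lem.\,2.6]{CGPV23} and \cite[Prop.\,5.16]{costantino2026gradedsphericalskein21ghqft}. Your plan has a genuine gap at its core, in (a) and in the ``sliding property'' you use for (c).

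\textbf{Why (a)--(c) do not work as proposed.} They rest on a three-dimensional ``encircling'' picture that is not available here. A bichrome graph is a \emph{disjoint} union of the blue part and the red curves inside the surface $\Sigma$. So no blue strand crosses or threads through $\omega$, and after the red--blue modification the $G$-loop does not encircle $Q$. It runs once along $\omega$, which may be non-separating, and meets the blue graph only in the coupon $c$. This has several consequences:
\begin{itemize}
\item The configuration is not a morphism of $\cC$. Since \eqref{eq:chromatic-def} is an equality of morphisms, it can only be applied inside a disc. It cannot ``rewrite the diagram in terms of $\Lambda$'': inserting $\Lambda^{\mtr}$ amounts to a cutting move, which exists for $\cgpt(H,-)$ but not in $\sk_{\adm}(\Sigma)$, where the lemma is stated.
\item $c-c'$ is in general a non-zero morphism. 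So the two graphs do not differ by a skein relation in a single disc.
\item The non-degenerate pairing $\cC(X,P)\times\cC(P,X)\to\kk$ separates morphisms, not skein classes.
\item Trading $c_{Q\otimes X,G}$ for $c_{Q,G}$ ``with $X$ passing through the $G$-loop'' is meaningless for embedded graphs.
\item ``Dragging along paths that avoid red curves'' does not relate fingers that reach $\omega$ from its two different sides, or along non-homotopic paths. That is the real content of (c).
\end{itemize}
By contrast, (d) is essentially bookkeeping once single-curve independence is known inside an arbitrary ambient bichrome graph.

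\textbf{What is missing: a mechanism acting on skeins, not morphisms.} One option for (a) and (b) is dinaturality. Sliding a coupon once around the closed $G$-loop is an isotopy. So the class depends on the coupon $f\in\End(Q\otimes V)$ only through its image in $\int^{V}\Hom(Q,Q\otimes V\otimes V^{*})$. As $Q$ is projective, $\Hom(Q,Q\otimes -)$ is exact, and Proposition~\ref{prop:monad-coker} identifies this coend with $\Hom(Q,Q\otimes Z(\1))$. By \eqref{eq:chromatic-via-cointegral}, every $c\in C_{Q,G}$, for every $G$, maps to $\id_Q\otimes\lambda$ there. This settles (a) and (b) together; your $G\oplus G'$ step is an instance of this. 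It also handles moving the finger across blue vertices.

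For changing the projective edge, the side of $\omega$, or the path, a correct use of \eqref{eq:chromatic-def} is a double insertion:
\begin{enumerate}
\item Bring a second projective finger next to the $G$-loop.
\item Apply \eqref{eq:chromatic-def} in a small disc there, with $X$ the adjacent segment of the $G$-loop itself. This gives one loop along $\omega$ carrying both chromatic maps, joined by a dual-basis pair of coupons.
\item Start instead from the modification at the second finger and insert at the first. One checks, via the partial-trace property and cyclicity of the two-sided trace $\mtr$, that this yields the same dual-basis sum. Hence the two choices agree.
\end{enumerate}
Your plan contains neither of these ingredients.
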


Using this lemma, we can define
\begin{equation}
   \mathrm{RB} : \tilde{\sk}^{\mathrm{bi}}_{\adm}(\Sigma) \to \sk_{\adm}(\Sigma)
   ~~ , \quad
   \Gamma \mapsto [\Gamma_\mathrm{rb}] \ .
\end{equation}
Since admissible bichrome graphs without red part are allowed, the map $\mathrm{RB}$ is evidently surjective, and we define $\sk^{\mathrm{bi}}_{\adm}(\Sigma)$ to be the quotient
\begin{equation}
    \sk^{\mathrm{bi}}_{\adm}(\Sigma) := 
    \tilde{\sk}^{\mathrm{bi}}_{\adm}(\Sigma) / \ker( \mathrm{RB} ) \ .
\end{equation}
By abuse of notation, we also denote the resulting isomorphism by $\mathrm{RB}$:
\begin{equation}
   \mathrm{RB} : \sk^{\mathrm{bi}}_{\adm}(\Sigma) \xrightarrow{~\sim~} \sk_{\adm}(\Sigma) \ .
\end{equation}
In particular, we can now extend the invariant $\cgpt$ in \eqref{eq:FH-def} to admissible bichrome graphs:
\begin{equation}
    \cgpt^\mathrm{bi} := \{
    \cgpt^\mathrm{bi}(H, -):  \, \sk^{\mathrm{bi}}_{\adm}(\bd H) \xrightarrow{\mathrm{RB}} \sk_{\adm}(\bd H)
    \xrightarrow{\cgpt(H, -)}
    \kk \, \}
\end{equation}
where $H$ ranges over multi-handlebodies.
Accordingly, $\cgpt^\mathrm{bi}$ inherits properties from $\cgpt$ as listed in Theorem~\ref{thm:CGP23}. We state these as a corollary:

\begin{cor}\label{cor:F-H-bi-properties}
\ 
\begin{enumerate}
\item If $[\Gamma] \in \sk^{\mathrm{bi}}_{\adm}(\bd H)$ has no red components, then $\cgpt^\mathrm{bi}(H, [\Gamma]) = \cgpt(H, [\Gamma])$.

\item The values of $\cgpt^\mathrm{bi}$ only depends on the orientation preserving diffeomorphism classes of elements $(H,[\Gamma])$.

\item $\cgpt^\mathrm{bi}$ is multiplicative with respect to disjoint unions. 

\item Let $(H,\Gamma)$ be a handlebody with admissible bichrome graph $\Gamma$ and $D \subset H$ a cutting disc which does not meet the red graph, then 
\begin{equation*}
\cgpt^\mathrm{bi}(H, [\Gamma] ) =
\cgpt^\mathrm{bi}(\cut_D(H), [\cut_D(\Gamma)] )\ .
\end{equation*}
\end{enumerate}
\end{cor}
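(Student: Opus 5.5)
Each of the four items should follow from the factorisation $\cgpt^\mathrm{bi}(H,-) = \cgpt(H,-)\circ \mathrm{RB}$ together with the corresponding property in Theorem~\ref{thm:CGP23}, so I will treat them in order.

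For (1), if $\Gamma$ has no red components, the iterative red-blue procedure performs no modifications and $\Gamma_\mathrm{rb}=\Gamma$. Hence $\mathrm{RB}([\Gamma])$ is the class of $\Gamma$ in $\sk_{\adm}(\bd H)$, and the claim is immediate.

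For (2), let $f:(H,\Gamma)\to(H',\Gamma')$ be an orientation preserving diffeomorphism. All ingredients of a red-blue modification are local and intrinsic: the red curve $\omega$, the points $p,q$, the path $\gamma$ avoiding red curves, and the choice of $G$ and $c_{P,G}$. So applying $f$ to a sequence of red-blue modifications for $\Gamma$ gives a valid sequence for $\Gamma'$, depths are preserved, and $f(\Gamma_\mathrm{rb})$ is a legitimate choice of $\Gamma'_\mathrm{rb}$. By the independence lemma for $[\Gamma_\mathrm{rb}]$, we get $\mathrm{RB}[\Gamma'] = f_*\mathrm{RB}[\Gamma]$, and item (1) of Theorem~\ref{thm:CGP23} concludes. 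The same reasoning shows that $\ker(\mathrm{RB})$ is preserved by $f$, so $\cgpt^\mathrm{bi}$ is well defined on classes.

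For (3), on $H_1\sqcup H_2$ we can run the red-blue procedure componentwise. Admissibility is imposed per connected component, and depth only involves paths inside one component, so $(\Gamma_1\sqcup\Gamma_2)_\mathrm{rb}=(\Gamma_1)_\mathrm{rb}\sqcup(\Gamma_2)_\mathrm{rb}$. Item (3) of Theorem~\ref{thm:CGP23} then gives multiplicativity.

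Item (4) is the only step requiring care, since red-blue modifications on $\Gamma$ and on $\cut_D(\Gamma)$ must be chosen compatibly. Because $D$ misses the red graph, every red curve survives unchanged in $\bd\cut_D(H)$. The difficulty is that paths $\gamma$ used in $H$ may cross $\bd D$, and depth may change after cutting.

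I would first isotope the relevant strands and use the freedom of choices in the independence lemma. For each red curve I choose the modification so that its annular neighbourhood, and the portion of the path $\gamma$ near it, avoid $\bd D$. Concretely, I route $\gamma$ to a projective edge without crossing $\bd D$ when possible. Otherwise I let $\gamma$ end at a point on one of the new coupon strands: after cutting, the new coupons carry $P=X_1\otimes\cdots\otimes X_n\in\Proj_\cC$ on both sides, which supplies a projective edge on each side of the cut.

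More cleanly, I would argue that the red-blue modification commutes with the cutting move. On one hand, modify $\Gamma$ in $H$ using paths that stay in the complement of a collar of $\bd D$, or that cross $\bd D$ transversally, which is allowed since $\gamma$ may cross blue edges. The result $\Gamma_\mathrm{rb}$ still meets $\bd D$ in a projective tensor product, so $D$ remains a cutting disc for it. On the other hand, there is the graph obtained by red-blue modifying $\cut_D(\Gamma)$ with the same paths, split at the cut. The graph $\cut_D(\Gamma_\mathrm{rb})$ differs from this one only by the extra strands through $\bd D$ and the sum over dual bases. Using naturality of $\Lambda^\mtr_\bullet$ (Lemma~\ref{lem:Lam-natural}) in the form of local skein relations near the coupons, these agree in $\sk_{\adm}(\bd\cut_D(H))$. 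Then Theorem~\ref{thm:CGP23}(4) applied to $\Gamma_\mathrm{rb}$ gives the claim.

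The main obstacle is exactly this compatibility: the paths $\gamma$ crossing $\bd D$ change the label $Q$ passing through the cut. The simplest fix is to prefer paths not crossing $\bd D$. If a red curve lies in a region whose only projective edges are reachable across $\bd D$, the cut supplies a new projective coupon strand on the same side. Independence of choices then lets us use that strand in both computations.
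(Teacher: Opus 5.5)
Your arguments for (1)--(3), and the fix for (4) in which every path $\gamma$ avoids $\bd D$, are the paper's proof. The paper chooses the red-blue modifications so that a neighbourhood of $\bd D$ is left untouched, hence $\cut_D(\Gamma_\mathrm{rb})=(\cut_D\Gamma)_\mathrm{rb}$, and then applies Theorem~\ref{thm:CGP23}\,(4) to $\Gamma_\mathrm{rb}$.

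Your hedge ``when possible \ldots otherwise'' is unnecessary, and removing it also settles your worry about depth. Since $\bd D$ meets a projective strand and no red curve, take a thin collar of $\bd D$ that misses all red curves. A path from $\omega$ that reaches this collar can instead run alongside it, crossing only blue strands, to the projective strand crossing $\bd D$ on the same side. In $\bd\cut_D(H)$ this is literally the same strand, now ending at the new coupon. Likewise, a path in $\bd\cut_D(H)$ that enters $D_\pm$ can be shortened at $\bd D_\pm$. So depths agree in $\bd H$ and $\bd\cut_D(H)$, and the same choices form a valid run of the iterative procedure on both surfaces. Moreover, $D$ remains a cutting disc for $\Gamma_\mathrm{rb}$, because nothing changed near $\bd D$.

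You should drop the ``more cleanly'' paragraph. If $\gamma$ crosses $\bd D$, then $\Gamma_\mathrm{rb}$ meets $\bd D$ in a larger object. After cutting, the finger strands run from the new coupons all the way to $\omega$ on one side and to $q$ on the other. That difference is not supported near the coupons. Naturality of $\Lambda^\mtr_\bullet$ is an identity in a disc around a coupon, so it does not give the claimed equality in $\sk_{\adm}(\bd\cut_D(H))$. The step is asserted without justification, and since a collar-avoiding choice always exists, you do not need it.
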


\begin{proof}
(1) follows as $\mathrm{RB}$ is the identity on blue graphs. (2) and (3) are clear, and for (4) note that one can apply the red-blue transformation in such a way that the graph is not changed in a neighbourhood of $\bd D$, and such that $\Gamma_\mathrm{rb}$ and $(\cut_D(\Gamma))_\mathrm{rb}$ agree outside a neighbourhood of $\bd D$.
\end{proof}

\begin{remark}\label{rem:red-loop-via-C_PG}
By construction, the replacement \eqref{eq:red-to-blue-mod} is an equality after applying $\cgpt^\mathrm{bi}(H,-)$. Consider a bichrome graph $\Gamma \cup \omega$, where the red loop $\omega$ bounds a disc in $\partial H$ which does not intersect the bichrome graph $\Gamma$. We can then use the admissible skein relation obtained from \eqref{eq:G-loop-Z0lam} to get the equality
\begin{equation}
\cgpt^\mathrm{bi}(H, [\Gamma \cup \omega] ) =
Z_0 \circ \lambda \cdot \cgpt^\mathrm{bi}(H, [\Gamma] ) \ .
\end{equation}
Thus if $Z_0 \circ \lambda = 0$, red loops bounding an otherwise empty disc lead to $\cgpt^\mathrm{bi}(H,-)$ being zero.
\end{remark}

The key relation that holds in $\sk^{\mathrm{bi}}_{\adm}(\Sigma)$ is that a projective blue strand can slide over a nearby red curve:

\begin{lemma}[{\cite[Lem.\,2.8]{CGPV23}}] \label{lem:projective-over-red}
Let $\Gamma_1$ and $\Gamma_2$ be two admissible bichrome graphs on a closed surface $\Sigma$ which agree outside a neighbourhood of a red curve $\omega$ and in the neighbourhood look like 
\begin{equation}
\Gamma_1 = \includegraphics[valign=c,scale=1]{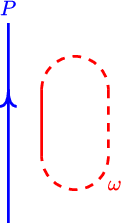}\,,\quad 
\Gamma_2 = 
\includegraphics[valign=c,scale=1]{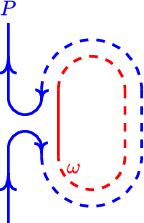}
\end{equation}
where $P \in \Proj_\cC$.
Then in $\sk^{\mathrm{bi}}_{\adm}(\Sigma)$ we have $[\Gamma_1] = [\Gamma_2]$.
\end{lemma}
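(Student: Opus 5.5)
The idea is to reduce the sliding relation to the defining property of chromatic maps \eqref{eq:chromatic-def}, using that the class in $\sk^{\mathrm{bi}}_{\adm}(\Sigma)$ is determined by its image under $\mathrm{RB}$. So it suffices to show $\mathrm{RB}(\Gamma_1)=\mathrm{RB}(\Gamma_2)$ in $\sk_{\adm}(\Sigma)$.

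\emph{Step 1: choose a compatible red-blue modification.} The independence lemma for $[\Gamma_{\mathrm{rb}}]$ lets us convert $\omega$ using any admissible choice. In both $\Gamma_1$ and $\Gamma_2$ we apply the modification \eqref{eq:red-to-blue-mod} to $\omega$ using the $P$-labelled strand that is being slid. We pick the point $p\in\omega$ and the path $\gamma$ inside the given neighbourhood, and pick the same $G$ and the same $c_{P,G}\in C_{P,G}$ for both graphs. This is allowed because $P$ is projective and $\gamma$ crosses no red curve, so $\omega$ has depth $0$ relative to this strand. The remaining red curves of $\Gamma_1$ and $\Gamma_2$ agree, so we convert them with identical further choices, which we arrange to lie outside the neighbourhood. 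Then $\Gamma_{1,\mathrm{rb}}$ and $\Gamma_{2,\mathrm{rb}}$ agree outside an annular neighbourhood $A$ of $\omega$ enlarged by the strip around the $P$-strand.

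\emph{Step 2: local comparison.} In $\Gamma_{1,\mathrm{rb}}$ the $P$-strand enters the chromatic coupon $c_{P,G}$, and the $G$-strand then runs once around the core of $A$. Whatever blue strands cross $\omega$ cross this $G$-loop; we bundle them into a single object $X$. In $\Gamma_{2,\mathrm{rb}}$ the $P$-strand itself has been slid around the annulus, so it also crosses $X$ while the $G$-loop runs alongside.

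Cutting the annulus along the $X$-strands, this neighbourhood becomes a square $K$ in which the two pictures are exactly the left and right sides of \eqref{eq:chromatic-def}. One side has the $P\otimes G$ bundle passing $X$ via the chromatic map; the other has $\id_P$ with the $X$ strand rerouted, evaluated by the same formula. Since \eqref{eq:chromatic-def} holds for every $X$, the difference of the two graphs is a skein relation for $K$.

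\emph{Step 3: admissibility and the expected obstacle.} The relation is admissible because the component still contains a projective edge outside $K$, for instance the $P$-strand away from the modification or another projective edge. If no such edge exists, we first enlarge the chain by splitting $P$ via an identity coupon so that a projective segment stays outside $K$.

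The main obstacle is matching orientations and duals: depending on the side from which $P$ approaches $\omega$ and on the slide direction, the local picture may be the mirror or dual version of \eqref{eq:chromatic-def}. I expect to handle this using pivotality, by rotating the square $K$, together with the symmetry of the chromatic condition under duality, namely $c_{P,G}^*$ versus $c_{P^*,G^*}$. That symmetry would be checked from \eqref{eq:chromatic-via-cointegral} using two-sidedness of $\mtr$.
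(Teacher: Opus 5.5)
The paper itself gives no proof of this lemma; it quotes it from \cite{CGPV23}. Your reduction to $\mathrm{RB}(\Gamma_1)=\mathrm{RB}(\Gamma_2)$ is a sensible start, as is converting $\omega$ with the $P$-strand itself and appealing to independence of the choices. However, Step~2 does not work.

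In a bichrome graph the red curves are disjoint from the blue graph, and graphs on a surface have no crossings. So no blue strand crosses $\omega$ or the $G$-loop that replaces it. There is no object $X$ to bundle and nothing to ``cut the annulus along''.

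What you actually have, in both graphs, is the same $G$-loop parallel to $\omega$, attached through the chromatic coupon to the $P$-strand. In $\Gamma_{1,\mathrm{rb}}$ the $P$-strand leaves on one side of the core; in $\Gamma_{2,\mathrm{rb}}$ it runs the long way round. The two graphs therefore differ all the way around the core of the annulus. Their difference is not supported in an embedded square, and embedded squares are the only places where skein relations are imposed.

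Moreover, \eqref{eq:chromatic-def} is not a rerouting identity. One side has the $G$-loop threading the dual-basis cut $\Lambda^{\mtr}_{G^*\otimes X}$ of an $X$-strand; the other side is $\id_{P\otimes X}$. It is the identity behind the capping move (step (4) in the proof of Lemma~\ref{lem:hb-cap}), not something that moves $P$ to the other side of the loop. The orientation and duality bookkeeping you single out as the main obstacle is secondary.

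What is missing is a global argument that carries the $P$-strand around the core. Two possible routes:

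\emph{Route 1.} Slide the coupon along the $G$-loop, dragging the $P$-strand with it. This exhibits $\Gamma_{2,\mathrm{rb}}$ as a loop of the bundle formed by $G$ and the $P$-strand, closed off by a junction on the same side as in $\Gamma_{1,\mathrm{rb}}$. You would then have to prove that this junction is a legitimate red-blue datum, i.e.\ a chromatic map for the projective generator given by that bundle. The independence of $\mathrm{RB}$ from all choices would then finish the argument.

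\emph{Route 2.} Since $P$ is projective, $\cC(P,P\otimes -)$ is exact. Closing a $V$-strand around the core is dinatural in $V$, so it factors through $\cC(P,P\otimes Z(\1))$. By \eqref{eq:chromatic-via-cointegral}, the chromatic coupon then contributes only $\id_P\otimes\lambda$. Comparing $\Gamma_{1,\mathrm{rb}}$ with $\Gamma_{2,\mathrm{rb}}$ becomes an algebraic statement about how $P$ passes around the cointegral $\lambda$.

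Either way, the essential input goes beyond a single local application of \eqref{eq:chromatic-def} in one square.
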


There are two useful corollaries of this lemma. The first one is that also a non-projective edge can slide over a red curve:

\begin{cor}\label{cor:slide-X}
Suppose $\Sigma \setminus \omega$ is connected. Then the statement of Lemma~\ref{lem:projective-over-red} also holds if we replace $P$ by an arbitrary object $X \in \cC$.
\end{cor}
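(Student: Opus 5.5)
Since $\Sigma\setminus\omega$ is connected, the plan is to reduce to Lemma~\ref{lem:projective-over-red} by temporarily fusing the $X$-strand with a projective strand brought in along a path that avoids $\omega$.

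\emph{Setup.} Since $\Gamma_1$ is admissible, the component of $\Sigma$ containing $\omega$ carries an edge labelled by some $Q\in\Proj_\cC$. Because $\Sigma\setminus\omega$ is connected, there is an embedded path $\gamma$ in $\Sigma\setminus\omega$ from a point of this $Q$-edge to a point on the $X$-strand on the side of $\omega$ from which it slides. The path may cross other blue edges but never $\omega$. I would use it to isotope a small arc of the $Q$-edge (together with any blue edges it crosses, whose labels get absorbed into $Q$) until it runs parallel to the $X$-strand next to $\omega$. This isotopy is done in $\tilde\sk^{\bi}_{\adm}(\Sigma)$ and happens identically in $\Gamma_1$ and $\Gamma_2$.

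\emph{Fusing and sliding.} Next I fuse the two parallel strands into a single strand labelled $X\otimes Q$. This uses an admissible skein relation in a small square containing both strands: insert the identity of $X\otimes Q$ written as a pair of coupons joined by one $X\otimes Q$ edge. The relation is admissible because the $Q$-edge still leaves the square. Since $X\otimes Q\in\Proj_\cC$, Lemma~\ref{lem:projective-over-red} lets the fused strand slide over $\omega$. Admissibility is preserved, since the fused edge is itself projectively coloured.

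\emph{Unfusing.} After the slide, I split the $X\otimes Q$ strand back into parallel $X$ and $Q$ strands by the inverse skein relation. Then I isotope the $Q$-arc back along $\gamma$, which now lies on the other side of $\omega$. This yields $\Gamma_2$ with the $Q$-edge pushed across $\omega$. I then need to remove that extra excursion of $Q$ over $\omega$.

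\emph{Main obstacle.} The delicate step is this last bookkeeping. A cleaner route is to avoid moving $Q$ across $\omega$ at all, using a ``finger'': push a small loop of the $Q$-edge along $\gamma$ so that the doubled strands $Q$ and $Q^*$ sit next to $X$. Then fuse $X\otimes Q\otimes Q^*$ (or fuse $X$ with $\mathrm{coev}_Q$ via a coupon), which is still projective, and slide it over $\omega$ by Lemma~\ref{lem:projective-over-red}. Finally, undo the finger on the far side, using that the finger's tip crossing $\omega$ can be retracted through the region where $\Sigma\setminus\omega$ is connected. The point requiring care is that all skein relations used must be admissible and must be performed away from $\omega$. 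I would check this by keeping a portion of the original $Q$-edge outside every square used.
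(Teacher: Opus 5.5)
Your first route is the paper's route, but it stops one step short. After the joint slide you say you still ``need to remove that extra excursion of $Q$ over $\omega$'', and you treat this as the main obstacle. That step is simply Lemma~\ref{lem:projective-over-red} again, applied in the reverse direction to the $Q$-strand alone; this is allowed because $Q\in\Proj_\cC$. The paper's proof does exactly this: it slides $X\otimes P$ over $\omega$ using the lemma for the projective object $X\otimes P$, then slides $P$ back using the lemma for $P$ only.

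Two points need fixing along the way.
\begin{itemize}
\item A slide does not carry a strand to ``the other side'' of $\omega$. It replaces the strand by its band sum with a push-off of $\omega$, so nothing ever crosses $\omega$. In particular, the excursion of $Q$ is not undone by isotoping back along $\gamma$, as your unfusing paragraph suggests; it is undone by the reverse slide.
\item For the reverse slide to apply to $Q$ alone, bring $Q$ in on the side of the $X$-strand facing away from $\omega$. The paper arranges this by taking $\gamma$ to start at $\omega$ and cross the $X$-edge before any other edge. Then, after the joint slide, the push-off of $\omega$ in the slid $Q$-strand lies directly against $\omega$, and no part of $X$ enters the band of $Q$. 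So the local picture around $Q$ is exactly $\Gamma_2$ of the lemma. With $Q$ placed between $X$ and $\omega$ instead, the slid $X$-strand would sit inside that band and the reverse slide would be blocked.
\end{itemize}

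Your finger route can also be completed, but not for the reason you give. After sliding the fused strand $X\otimes Q\otimes Q^*$, both sides of the finger are band-summed in parallel with push-offs of $\omega$. So the finger is still the boundary of a thin neighbourhood of an embedded arc, namely the old core arc band-summed with a push-off of $\omega$. This arc avoids $\omega$ and the rest of the graph, so the finger retracts along it by an isotopy. No tip ever crosses $\omega$, and connectivity of $\Sigma\setminus\omega$ is used only to produce $\gamma$.

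With that justification, the finger version is a valid alternative. It needs the lemma only once, plus an isotopy, and it does not care on which side of $X$ the finger lies. The paper's argument avoids fingers, but it uses the lemma twice and relies on the placement of $P$ described above.
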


\begin{proof}
As $\Sigma \setminus \omega$ is connected, we can find a path $\gamma$ which connects $\omega$ to a $P$-labelled edge, with $P$ projective, and which passes through the $X$-labelled edge before crossing any other edges. If the path crosses more edges before reaching $P$ (possibly including the original edge $X$), we replace $P$ by the corresponding tensor product.
We can drag the $P$-labelled edge close to the $X$-labelled edge along $\gamma$.  Then the following equalities hold in $\sk^{\mathrm{bi}}_{\adm}(\Sigma)$: 
\begin{equation}
\includegraphics[valign=c,scale=1]{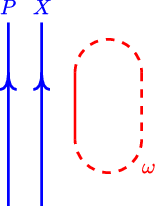}
=
\includegraphics[valign=c,scale=1]{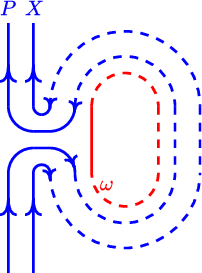}
=
\includegraphics[valign=c,scale=1]{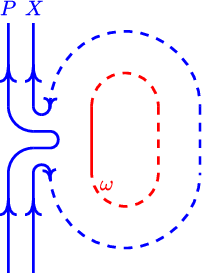}
\end{equation}
In the first step we apply Lemma~\ref{lem:projective-over-red} to the projective object $X \otimes P$, and in the second step we apply the same lemma in the opposite direction to the projective object $P$ only.
\end{proof}

The second corollary is that we can also slide a red curve over another red curve under suitable connectedness assumptions.

\begin{cor}\label{cor:red-over-red}
Let $\Gamma_1$ and $\Gamma_2$ be two admissible bichrome graphs on a closed surface $\Sigma$ which agree outside a neighbourhood of a red curve $\omega$ and in the neighbourhood look like:
\begin{equation}
\Gamma_1 = \includegraphics[valign=c,scale=1]{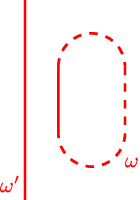}\,,\quad 
\Gamma_2 = 
\includegraphics[valign=c,scale=1]{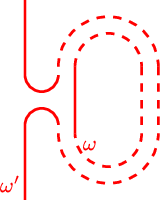}
\end{equation}
Suppose $\Sigma \setminus \omega$ is connected. 
Then in $\sk^{\mathrm{bi}}_{\adm}(\Sigma)$ we have $[\Gamma_1] = [\Gamma_2]$.
\end{cor}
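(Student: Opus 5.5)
The plan is to reduce the red-over-red slide to Lemma~\ref{lem:projective-over-red} by converting the red curve that is being slid into a blue strand. Call $\omega'$ the red curve whose arc passes over $\omega$ in the picture. Apply the map $\mathrm{RB}$ to $\Gamma_1$ and $\Gamma_2$, starting with $\omega'$. Since equality in $\sk^{\mathrm{bi}}_{\adm}(\Sigma)$ means equality after $\mathrm{RB}$, and $[\Gamma_{\mathrm{rb}}]$ does not depend on the order or on the choices, we may first perform a single red-blue modification on $\omega'$ in both graphs. We choose it so that it is identical for $\Gamma_1$ and $\Gamma_2$ away from the neighbourhood of $\omega$. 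The result is a pair of bichrome graphs $\Gamma_1',\Gamma_2'$ in which $\omega'$ has become a blue loop labelled by the projective generator $G$, joined by a coupon $c_{Q,G}$ to a projective strand $Q$. In these graphs the portion near $\omega$ is a $G$-labelled blue strand crossing or not crossing $\omega$.

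The key steps, in order, are as follows.

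\begin{enumerate}
\item Since $\Sigma\setminus\omega$ is connected, choose the path $\gamma$ used in the red-blue modification of $\omega'$ so that it avoids the neighbourhood of $\omega$. Then choose the point $p\in\omega'$ where the chromatic coupon sits outside the sliding region. This needs one check: the depth of $\omega'$ may be positive, with other red curves separating it from a projective edge. In that case we first do red-blue modifications of the shallower curves. These are the same for both graphs and happen away from the local picture, which is allowed by order-independence.
\item We now have two bichrome graphs which agree outside a neighbourhood of $\omega$. Locally they differ exactly by a blue $G$-labelled strand sliding over $\omega$. The label $G$ is projective, so Lemma~\ref{lem:projective-over-red} applies directly and gives $[\Gamma_1']=[\Gamma_2']$.
\item Finally, complete the $\mathrm{RB}$ procedure on the remaining red curves, including $\omega$, using identical choices for both graphs. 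This yields $\mathrm{RB}(\Gamma_1)=\mathrm{RB}(\Gamma_2)$, that is, $[\Gamma_1]=[\Gamma_2]$. Alternatively, if $G$ fails to be projective in some convention, Corollary~\ref{cor:slide-X} handles an arbitrary label, again because $\Sigma\setminus\omega$ is connected.
\end{enumerate}

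The main obstacle is step (1): we must make sure the red-blue modification of $\omega'$ is legitimate and identical for both graphs. In particular, the path $\gamma$ to a projective edge and the location of the $c$-coupon must be chosen away from the local picture, and this must also work when $\omega'$ has positive depth. I would handle this by using the independence of choices in the lemma on $[\Gamma_{\mathrm{rb}}]$, together with the connectedness of $\Sigma\setminus\omega$, which lets everything be routed outside the sliding region.
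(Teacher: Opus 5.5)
Your proposal is correct and follows essentially the same route as the paper: after converting enough other red curves so that $\omega$ and $\omega'$ have depth $0$ (using connectedness of $\Sigma\setminus\omega$), the paper applies a red-blue modification to $\omega'$, slides the resulting projective blue strand over $\omega$ by Lemma~\ref{lem:projective-over-red}, and then undoes the red-blue modification on the slid curve. The only cosmetic difference is that you place the chromatic coupon outside the sliding region, whereas the paper brings the projective edge in from the left or right side of $\omega'$ within the local picture.
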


\begin{proof}
As $\Sigma \setminus \omega$ is connected, we may pick a representative of $[\Gamma_1]$ where enough red curves have been changed into blue curves so that $\omega$ and $\omega'$ have depth 0. Again by connectedness of $\Sigma \setminus \omega$, we can then isotope a projectively labelled edge (or possibly a tensor product of several edge labels, including a projective one) either close to the left side of $\omega'$ or to the right side. 
We give the computation for the blue curve approaching from the left, the other computation is similar:
\begin{equation}
\includegraphics[valign=c,scale=0.9]{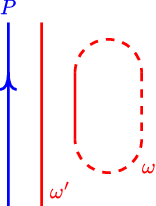}
=
\includegraphics[valign=c,scale=0.9]{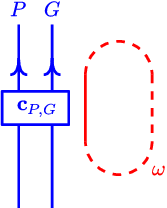}
=
\includegraphics[valign=c,scale=0.9]{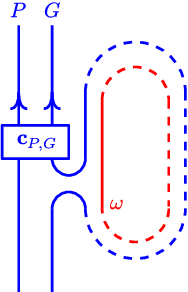}
=
\includegraphics[valign=c,scale=0.9]{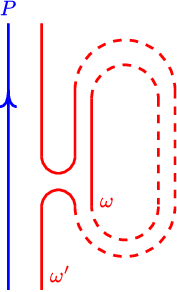}
\end{equation}
The first and third equality are both given by the red-blue modification, and the second equality is Lemma~\ref{lem:projective-over-red}.
\end{proof}

\begin{definition}\label{def:slide}
The change of bichrome graphs in Corollaries \ref{cor:slide-X} and \ref{cor:red-over-red} is called a \emph{sliding move} along a red curve. 
\end{definition}

Let $(H, \Gamma)$ and $(H', \Gamma')$ be multi-handlebodies with admissible bichrome graphs on the boundary. 
Following \cite[Sec.\,5]{CGPT20-Kup}, we say that $(H, \Gamma)$ is obtained from $(H', \Gamma')$ by a \emph{red capping move} along a red loop $\omega \subset \Gamma'$ and write $(H, \Gamma) = \Cap_{\omega}(H', \Gamma')$ if 
\begin{itemize}
\item there is a properly embedded disk $D \subset H'$ such that $\bd D \subset \bd H'$ intersects $\omega$ at one point;
\item $H$ is obtained from $H'$ by attaching a 2-handle along $\omega$; and
\item $\Gamma = \Gamma' \setminus \omega$.
\end{itemize}
In the converse direction we say that $(H', \Gamma')$ is obtained from $(H, \Gamma)$ by a \emph{red digging move}:
\begin{equation}\label{eq:cap-dig-1}
\includegraphics[valign=c,scale=0.8]{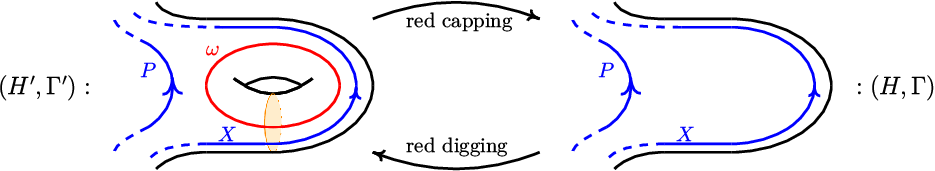}
\end{equation}
The next lemma shows that $\cgpt^\mathrm{bi}$ is invariant under red digging/capping moves. This is shown in \cite[Lem.\,5.7]{CGPT20-Kup} in the case that every edge of the blue graph is coloured in $\Proj_\cC$. The same proof works for admissible bichrome graphs, and we review it below for the reader's convenience.

\begin{lemma}\label{lem:hb-cap}
If $(H,\Gamma)$ is obtained from $(H',\Gamma')$ by a red capping move, then we have 
$\cgpt^\mathrm{bi}(H, [\Gamma]) = \cgpt^\mathrm{bi}(H', [\Gamma'])$.
\end{lemma}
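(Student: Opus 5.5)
We compare both sides by reducing them to the same blue-graph evaluation on a common handlebody, using the cutting move of Corollary~\ref{cor:F-H-bi-properties}(4) on the $H'$ side and the cocore disc of the new 2-handle on the $H$ side.

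\emph{Step 1: normalise near $\omega$.} Let $D\subset H'$ be the disc with $\bd D$ meeting $\omega$ in a single point. Then $\bd H'\setminus\omega$ is connected, since $\bd D$ passes from one side of $\omega$ to the other. We replace $\Gamma'$ by a representative of $[\Gamma']\in\sk^{\mathrm{bi}}_{\adm}(\bd H')$ in which every red curve other than $\omega$ has been turned blue by red-blue modifications. The modifications are chosen to be supported away from a small neighbourhood $N$ of $\omega\cup\bd D$; this is possible because red-blue modifications are local near the curve being modified and along the chosen path. The same modifications, performed on $\bd H$ (which agrees with $\bd H'$ outside $N$), give a representative of $[\Gamma]$. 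Both sides therefore change compatibly, and we may assume $\omega$ is the only red curve.

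\emph{Step 2: arrange that $D$ is a cutting disc.} Using sliding moves (Corollary~\ref{cor:slide-X}) and isotopies along paths in the connected surface $\bd H'\setminus\omega$, we drag a projectively coloured blue edge $P$ so that it runs parallel to $\bd D$ near the intersection point with $\omega$. We then perform the red-blue modification on $\omega$ using this $P$ and a projective generator $G$. The result is a blue $G$-loop parallel to $\omega$, with the chromatic coupon $c_{P,G}$ joining the $P$-strand and the $G$-loop. The curve $\bd D$ is then crossed exactly once by the $G$-loop and once by the $P$-strand. Hence $D$ is a cutting disc for a purely blue admissible graph, and Theorem~\ref{thm:CGP23}(4) gives
\begin{equation*}
\cgpt(H',[\Gamma'_{\mathrm{rb}}])=\sum_i \cgpt\big(\cut_D H',[\cut_D\Gamma'_{\mathrm{rb}}]\big),
\end{equation*}
where the sum runs over the dual basis pair $\alpha_i,\alpha^i$ for $P\otimes G$.

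\emph{Step 3: identify with the $H$ side.} $H$ is $H'$ with a 2-handle attached along $\omega$. We claim $\cut_D H'\cong H$, via a diffeomorphism carrying $\bd(\cut_D H')$ to $\bd H$. Under this identification, the two scars of $D$ together with the cut $G$-loop correspond to the region around the cocore of the new handle. The graph $\cut_D\Gamma'_{\mathrm{rb}}$ then becomes $\Gamma$ with a local insertion near one point of the $P$-strand: the chromatic map $c_{P,G}$ with its $G$-strand closed up through the pair of coupons $\sum_i \alpha^i\otimes\alpha_i$, that is, through $\Lambda^{\mtr}_{P\otimes G}$.

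\emph{Step 4: the local identity (main obstacle).} It remains to prove, as an admissible skein relation, that this local insertion equals $\id_P$. Equivalently, closing the $G$-strand of $c_{P,G}$ through $\Lambda^{\mtr}_\bullet$ gives $\id_P$. For this we use naturality of $\Lambda^{\mtr}_\bullet$ (Lemma~\ref{lem:Lam-natural}) together with \eqref{eq:Lambda-from-Lambda-t-X} to rewrite $\Lambda^{\mtr}_{P\otimes G}$ as a $\partial$-map composed with $\Lambda$. We then apply \eqref{eq:chromatic-via-cointegral} to replace $c_{P,G}$ by the cointegral $\lambda$, and conclude with $\Lambda\circ\lambda=1$. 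This is the step I expect to need the most care. One must check the orientation and pivotal conventions, and also that it is the integral $\Lambda$, not the counit $Z_0$, that appears. The counit would instead produce the factor $Z_0\circ\lambda$ of Remark~\ref{rem:red-loop-via-C_PG}, and the fact that $\omega$ is crossed once by $\bd D$ is exactly what yields $\Lambda$ rather than $Z_0$. Once this identity holds, the right-hand side of Step 2 equals $\cgpt(H,[\Gamma_{\mathrm{rb}}])=\cgpt^{\mathrm{bi}}(H,[\Gamma])$, which proves the lemma.
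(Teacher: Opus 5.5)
Your overall strategy is the paper's: red-blue modify $\omega$, cut along $D$, identify $\cut_D H'\cong H$, and show that the cut-open insertion is trivial. However, Step 4 has a genuine gap, and Step 2 creates it. You route $P$ across $\partial D$ so that $D$ becomes a cutting disc, but this is unnecessary: $G$ is projective, so $\partial D$ meeting the $G$-loop once already suffices. With $P$ in the cut, the insertion becomes $\mathrm{ptr}_G(\Lambda^{\mtr}_{P\otimes G}\circ c_{P,G})$, with $P$ and $G$ running the same way through the $\Lambda$-box. That is not the defining property \eqref{eq:chromatic-def} of a chromatic map, and your sketch does not prove it.

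Writing $\Lambda^{\mtr}_{P\otimes G}=(\id\otimes\Lambda)\circ\partial_{\1,P\otimes G}$ puts the $Z(\1)$-node around both strands. By contrast, \eqref{eq:chromatic-via-cointegral} is about a node on the $G$-loop alone. Splitting $\partial_{\1,P\otimes G}$ with the monad product makes $G$ the outer factor. The node on the $G$-loop then receives a $Z(\1)$-input from $P$, so \eqref{eq:chromatic-via-cointegral} still does not apply. You would also need the cointegral property, not just $\Lambda\circ\lambda=1$. The identity you want is in fact true: it follows a posteriori from the lemma on a solid torus with a $P$-longitude parallel to $\omega$, together with non-degeneracy of $\mtr$. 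But your argument does not establish it.

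Second, your claim in Step 2 that $\partial D$ meets only the $G$-loop and the $P$-strand is unjustified. In general $\partial D$ crosses other blue edges of $\Gamma'$. It also crosses the blue loops produced in Step 1 from red curves that meet $\partial D$; contrary to what you say, those modifications cannot be kept away from $\partial D$. All of these strands get cut as well.

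The paper's proof handles both points at once:
\begin{enumerate}
\item Keep $P$ and $c_{P,G}$ off $\partial D$, and let $X$ be the tensor product of the labels of all other strands crossing $\partial D$.
\item Cutting along $D$ then produces, by \eqref{eq:Lam-t-P-def}, exactly $\Lambda^{\mtr}_{G^*\otimes X}$ closing the $G$-loop together with the $X$-strands.
\item Property \eqref{eq:chromatic-def}, which holds for every $X$, turns this insertion into $\id_{P\otimes X}$, which gives $(H,\Gamma)$ under $\cut_D H'\cong H$.
\end{enumerate}
You could instead slide the extra strands off $\partial D$ over $\omega$, but the general-$X$ chromatic identity makes that unnecessary.
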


\begin{proof}
Let $D \subset H'$ be a disc as in the conditions on the red capping move above. We may assume that a projectively labelled edge (or a tensor product involving one such edge) is near $\omega$. We get the following identities, for $P \in \Proj_\cC$ and $X \in \cC$, which hold after applying $\cgpt^\mathrm{bi}$,
\begin{equation}
\begin{split}
&\includegraphics[valign=c,scale=0.8]{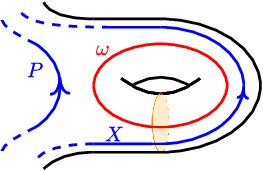}
\stackrel{(1)}{=}
\includegraphics[valign=c,scale=0.8]{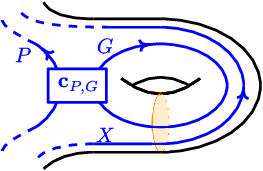}
\stackrel{(2)}{=}
\sum_i \includegraphics[valign=c,scale=0.8]{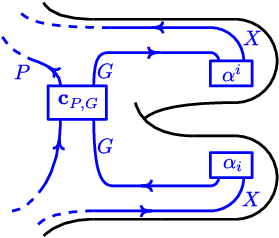}\\
&\stackrel{(3)}{=}
\includegraphics[valign=c,scale=0.8]{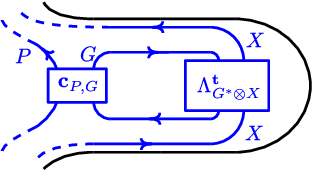}
\stackrel{(4)}{=}
\includegraphics[valign=c,scale=0.8]{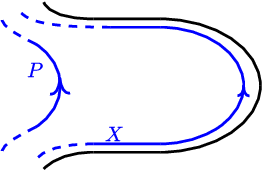}
\end{split}
\end{equation}
Step (1) is the red-blue modification, step (2) is invariance under the cutting move (Corollary~\ref{cor:F-H-bi-properties}), step (3) is just the definition of $\Lambda_{G^* \otimes X}^{\mtr}$ from \eqref{eq:Lam-t-P-def}, and step (4) is \eqref{eq:chromatic-def}. Note that step (3) and (4) are admissible skein relations as there is still the $P$-labelled edge in the complement of the disc where we apply identities of morphisms in $\cC$.
\end{proof}

\section{Three-manifold invariant}\label{sec:def-of-tau}

\subsection{Piecewise linear cell decompositions of manifolds}
In this section, we recall the notion of PLCW decompositions of manifolds following \cite{Kir12}. 
From now on, all maps are assumed to be piecewise linear, and for a subset $A$ inside a manifold $M$, we write $\nb(A)$ for a small neighbourhood of $A$ in $M$.

A \textit{generalised $n$-cell} $A^{(n)}$ is the image of a map $\chi: \bD^n \to \bR^N$ such that $\chi(\intr(\bD^n)) \cap \chi(\bd \bD^n) = \emptyset$ and $\chi$ is injective on $\intr(\bD^n)$. 
In this case, we call $\chi$ a \textit{characteristic map} for $A^{(n)}$, and denote $\intr(A^{(n)}) := \chi(\intr(\bD^n))$, $\bd A^{(n)} := \chi(\bd \bD^n)$. 
A \textit{generalised cell complex} is a finite collection $K$ of generalised cells such that for distinct $A$ and $B$ in $K$, $\intr(A) \cap \intr(B) = \emptyset$, and for any cell $A \subset K$, $\bd A$ is a union of cells. 
The support $|K|$ of $K$ is the union of all its cells, i.e., $|K| = \bigcup_{A \in K} A$. 
A \textit{regular cellular map} $f: L \to K$ between two generalised cell complexes is a map $f: |L| \to |K|$ such that for every cell $A = \chi(\bD^n) \in L$, there exists a cell $A' \in K$ such that $A' = f(A)$ and $f \circ \chi: \bD^n \to A'$ is a characteristic map for $A'$. 
The dimension and $n$-skeleton $K^{(n)}$ of a generalised cell complex are defined in the usual way.

A \textit{PLCW complex} is a generalised cell complex $K$ such that $\dim(K)=0$, or $\dim(K)=n>0$ and the following conditions hold:
\begin{itemize}
\item $K^{(n-1)}$ is a PLCW complex;
\item For any $n$-cell $A \in K$, $A = \chi(\bD^n)$, there is a PLCW complex $L$ such that $|L| \cong \bd\bD^n$, and the restriction $\chi|_{\bd\bD^n}: L \to K^{(n-1)}$ is a regular cellular map.
\end{itemize}

A \emph{PLCW decomposition} of a compact manifold $M$ is a PLCW complex $\de$ such that $M$ is homeomorphic to $|\de|$. If $\dim(M)=n$, then for $0 \le i \le n$ we denote the set of $i$-cells by $\de_i(M)$, or simply by $\de_i$. To indicate the relation with $M$, we denote by $M^{(i)}_{\de} = |\de_i|$ the $i$-skeleton of $M$ with respect to $\de$, and we write $\delta = (M_\delta^{(0)} \subset \cdots \subset M_\delta^{(n)} = M)$ when $\de$ is a PLCW decomposition of $M$.

Different PLCW decompositions of a compact manifold are related by a sequence of moves called elementary subdivisions introduced in \cite[Sec.\,7]{Kir12}. Here we briefly recall relevant concepts for later use. 

Let $n \ge 1$ and let $R_0 \subset \bR^n$ be the hyperplane determined by $x_n=0$, which divides $\bR^n$ into two subspaces $R_+$ (resp.~$R_-$) satisfying $x_n \ge 0$ (resp.~$x_n\le 0$). 
Let $K$ be a PLCW complex and $A^{(n)} = \chi(\bD^n)$ an $n$-cell. 
By definition, there is a PLCW decomposition $L$ of $\bd\bD^n$ such that $\chi|_{\bd\bD^n}: L \to K^{(n-1)}$ is a regular cellular map. 
If the image of the equator $\bd \bD^n \cap R_0$ in $L$ is a union of cells of $L$, then we can obtain a PLCW complex $K'$ from $K$ by replacing $A^{(n)}$ by the collection of the cells $A_+ = \chi(\bD^n \cap R_+)$, $A_- = \chi(\bD^n \cap R_-)$ and $A_0 = \chi(\bD^n \cap R_0)$. In this case, we say that $K'$ is obtained from $K$ by an elementary subdivision of the cell $A^{(n)}$. In summary, under suitable conditions, an elementary subdivision of $A^{(n)}$ divides $A^{(n)}$ into two $n$-cells that are separated by an $(n-1)$-cell. 

According to \cite[Thm.\,8]{Kir12}, if $\de$ and $\de'$ are PLCW decompositions of the same compact manifold $M$, then they are related by a finite sequences of elementary subdivisions and their inverses. We will need this fact to show that our invariant $\tau$ in Section \ref{sec:def-of-tau} is well-defined.

\begin{definition}\label{def:23-graph}
Let $M$ be a connected 3-manifold with boundary $\bd M$, and $\de$ a PLCW decomposition of $M$. 
The \emph{(2,3)-graph} of $M$ associated to $\de$, denoted by $\ourGraph_{\de}(M)$, is the graph constructed as follows:
\begin{itemize}
\item 
The set of vertices of $\ourGraph_{\de}(M)$, denoted by $\ourGraph_{\de}(M)_0$, consists of a distinguished vertex $v_0$, together with one vertex for each 3-cell of $M$. 
In other words, we have 
$\ourGraph_{\de}(M)_0 =\{v_0\} \sqcup \delta_3$, 
and in particular, $|\ourGraph_{\de}(M)_0| = |\de_3|+1$.

\item 
For each 2-cell $\twoCell{W}$ between two 3-cells $\threeCell{X}$ and $\threeCell{Y}$, i.e., $\twoCell{W} \subset \threeCell{X}\cap\threeCell{Y}$, we add an edge connecting $\threeCell{X}$ and $\threeCell{Y}$. For each 2-cell $\twoCell{W}$ between $\bd M$ and a 3-cell $\threeCell{X}$, i.e., $\twoCell{W} \subset \threeCell{X} \cap \bd M$, we add an edge connecting $v_0$ and $\threeCell{X}$. 
\end{itemize}
\end{definition}

\begin{remark}\label{rmk:our-graph}
The following observations follow immediately from the definition.
\begin{enumerate}
\item Since $M$ is assumed to be connected, $\ourGraph_{\de}(M)$ is a connected graph.
\item The distinguished vertex $v_0$ can be thought of as corresponding to the boundary $\bd M$ of $M$.
\item The part $\ourGraph_{\de}(M) \setminus \{v_0\}$ of the graph $\ourGraph_{\de}(M)$ with half-edges around $v_0$ can be embedded in $M$. Moreover, $\nb(M_\delta^{(1)})$, the neighbourhood of the 1-skeleton of $M$, is nothing but (the closure of) the complement of $\ourGraph_{\de}(M) \setminus \{v_0\}$ in $M$. 
\item  By construction, we have a bijection between the set $\de_2$ of 2-cell of $M$ and the set of the edges of $\ourGraph_{\de}(M)$. In the following, we will identify these two sets. 
\end{enumerate}
\end{remark}

We now consider substructures of a connected graph which will be used later.
A \textit{spanning tree} of a connected graph $\fG$ is a connected subgraph $\fT$ of $\fG$ which contains all the vertices of $\fG$ and has no cycles. 
Consequently, removing any edge from $\fT$ results in a disconnected graph.

\begin{remark}\label{rmk:cycle-switch}
By abuse of notation, we will denote the set of edges of a graph $\fG$ also by $\fG$. We will need the following properties of spanning trees of a graph $\fG$.
\begin{enumerate}
\item Suppose $\fG$ has $n$ vertices and let $\mathfrak{S} \subset \fG$ be a subgraph with the same set of vertices. Then $\mathfrak{S}$ is a spanning tree of $\fG$ if and only if $\mathfrak{S}$ is connected and has exactly $n-1$ edges, see e.g.\ \cite[Thm.\,2.1.4]{West-graphtheory}.

\item Let $\fT \subset \fG$ be a spanning tree and let $f$ be an edge not in $\fT$. Then there is a unique cycle $C$ formed by $f$ and edges from $\fT$. Removing any edge $e \neq f$ from $C$ results again in a spanning tree $(\fT \setminus \{e\}) \cup \{f\}$. Let us write this as $\fT(e \leadsto f)$. 

\item Let $\fT$ and $\fT'$ be two different spanning trees and let $f$ in $\fT'\setminus \fT$. Then there is an edge $e$ in $\fT\setminus \fT'$ such that $\fT(e\leadsto f)$ is again a spanning tree, see e.g.~\cite[Prop.\,2.1.7]{West-graphtheory} or~\cite[Ex.\,5.8]{Grinberg-2023}. Note that $\fT(e\leadsto f)$ has one edge more in common with $\fT'$ than $\fT$ had.

To find $e$ as above, consider the cycle $C$ formed by $f$ and edges of $\fT$. As $\fT'$ has no cycles, there must be at least one edge $e$ of $C$ not contained in $\fT'$. Apply part 1 to the pair $e$ and $f$.

\item The \textit{tree graph} $T(\fG)$ has as vertices spanning trees $\fT \subset \fG$ and has an edge between $\fT$ and $\fT'$ if and only if $\fT'=\fT(e \leadsto f)$ for some $e,f$ in the notation of part 1.
By part 2, $T(\fG)$ is connected, cf.\ \cite[Ex.\,5.9]{Grinberg-2023}.

\item Let $T_0(\fG)$ be the subgraph with the same vertices, but with fewer edges. Namely, $\fT$ and $\fT'$ are adjacent in $T_0(\fG)$ if and only if $\fT'=\fT(e \leadsto f)$ for some $e,f$ which have a common vertex $v$ in $\fG$. Then $T_0(\fG)$ is still connected.

To see this, note that by part 3 it is enough to show that $\fT$ and $\fT(e\leadsto f)$ are connected in $T_0(\fG)$ for an arbitrary choice of $e$ in $\fT$ and $f$ not in $\fT$. By part 1 there is a unique cycle $C$ containing $f$ and edges of $\fT$. This cycle must contain $e$. Let $f= h_0,h_1,\dots,h_n=e$ be consecutive edges on $C$. By construction, $h_1,\dots,h_n$ are in $\fT$ while $h_0$ is not. Let $\fT_{i+1} = \fT_i(h_{i+1} \leadsto h_i)$ with $\fT_0=\fT$. Then $\fT_1 = \fT_0(h_1 \leadsto h_0) = \fT(h_1 \leadsto f)$ and $\fT_2 = \fT_1(h_2 \leadsto h_1) = \fT(h_2 \leadsto f)$, etc., giving $\fT_i = \fT(h_i \leadsto f)$. By construction, $\fT_i$ and $\fT_{i+1}$ are adjacent in $T_0(\fG)$, and so $\fT_0 = \fT$ and $\fT_n = \fT(e \leadsto f)$ are connected in $T_0(\fG)$.
\end{enumerate}
\end{remark}

\subsection{Bulk-admissible graphs and the invariant \texorpdfstring{$\tau$}{}}
In this section we fix a triple $(\cC,\mtr,\pi_\1)$ as in \eqref{eq:oc-invariant-alg-input}. One can compare the following definition with \cite[Def.\,2]{RST24}.

\begin{definition}\label{def:adm-1}
A \emph{bulk-admissible graph over $\cC$} (or \textit{bulk-admissible $\cC$-graph}) in a compact oriented 3-manifold $M$ with boundary is a blue graph $\Gamma \subset \partial M$ over $\cC$ with no free ends such that for each connected component $J$ of $M$, at least one connected component of $\partial J$ contains a strand of $\Gamma$ coloured with an object in $\Proj_\cC$.
\end{definition}

\begin{definition}
Let $M$ be a 3-manifold with boundary $\Sigma := \partial M$, and $\Gamma$ a bulk-admissible $\cC$-graph in $M$. 
A \emph{generic} PLCW decomposition of $(M, \Gamma)$ is a PLCW decomposition $\delta = (M_\delta^{(0)} \subset M_\delta^{(1)} \subset M_\delta^{(2)} \subset M_\delta^{(3)} = M)$ of $M$, that satisfies the following conditions:
\begin{itemize}
\item 
$\delta$ restricts to a PLCW decomposition $(\Sigma_\delta^{(0)} \subset \Sigma_\delta^{(1)} \subset \Sigma_\delta^{(2)})$ of $\Sigma$;
\item 
$\Gamma \cap \delta_{0}(\Sigma) = \emptyset$;
\item 
$\Gamma$ intersects $\delta_{1}(\Sigma)$ only on the edges of $\Gamma$ and these intersections are transversal.
\end{itemize}
\end{definition}

Let $M$ be a connected 3-manifold and $\Gamma$ a bulk-admissible graph on $\bd M$. Choose the following data:
\begin{itemize}
\item $\delta$, a generic PLCW decomposition of $(M,\Gamma)$,
\item $\spnTree$, a spanning tree of $\ourGraph_{\de}(M)$, and
\item $Q := \{q \in \intr(\twoCell{W}) \mid \twoCell{W} \subset \bd M\}$, a set of points in the interior of the 2-cells in $\bd M$ (one point for each boundary 2-cell) that avoids $\Gamma$, i.e., $q \notin \Gamma$ for all $q \in Q$.
\end{itemize}
Under the identification of the set of 2-cells of $M$ and that of the edges in $\ourGraph_{\de}(M)$, we write $\loopEdge$ for the subset of 2-cells that do not correspond to edges in $\spnTree$. 

Let $\nb(M_\delta^{(1)})$ be a tubular neighbourhood of $M_\delta^{(1)}$ (the 1-skeleton of $M$). For each $q\in Q$, let $\twoCell{W}_q$ be the 2-cell containing $q$, and let $U_q$ be a small disk around $q$ in $\twoCell{W}_q$ that is disjoint from $\Gamma$. 
For all $q \in Q$, enlarge $U_q$ in $\twoCell{W}_q$ and push $\Gamma$ to align along the $\bd U_q$'s in this enlarging process, 
so that $\Gamma$, as well as all of the $\bd U_q$'s, are contained in $\nb(\oneCell{M}_{\de})$:
\begin{equation}\label{eq:push-red}
\includegraphics[valign=c, scale=0.7]{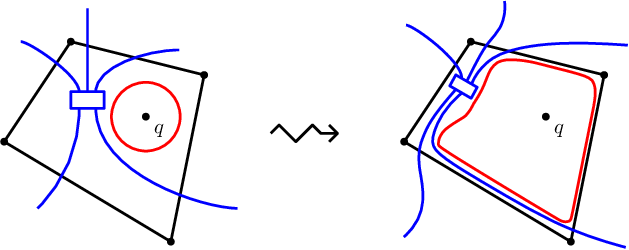}
\end{equation}
For every 2-cell $\twoCell{W} \in \de_2$, we define its belt corresponding to $Q$, denoted by $\belt_Q(\twoCell{W})$, as follows:
\begin{equation}\label{eq:belt}
\belt_{Q}(\twoCell{W}) := 
\begin{cases}
\bd U_q & \text{if $\twoCell{W} \subset \bd M$ and $\twoCell{W} \cap Q = q$;}\\
\twoCell{W} \cap \bd(\nb(\oneCell{M}_{\de})) & \text{if $\Int(\twoCell{W}) \subset \Int(M)$.}
\end{cases}
\end{equation}
Here, the $\bd U_q$ on the right hand side is understood as the boundary of the enlarged neighbourhood of $q$.

For each 2-cell $\twoCell{W} \in \loopEdge$, add a red loop along $\belt_Q(\twoCell{W})$. Pushing $\Gamma$ to the surface $\partial \nb(M_\delta^{(1)})$, and taking its union with the red belt loops (which are already on the surface), we obtain a handlebody $\nb(M^{(1)}_{\de})$ with an admissible bichrome graph $\Gamma_{Q}(\spnTree)$ on its boundary:
\begin{equation}\label{eq:push-to-hdle}
\includegraphics[valign=c, scale=0.7]{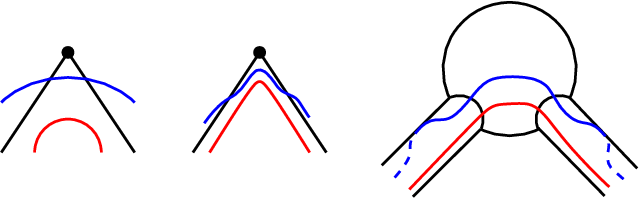}
\end{equation}

Using the above notations, we give the following definition.

\begin{definition}\label{def:hb-of-M}
Let $M$ be a connected compact oriented 3-manifold and $\Gamma$ a bulk-admissible $\cC$-graph on $\bd M$. 
For a choice of $\de$, $\spnTree$ and $Q$ as in the above discussions, we define the associated \emph{handlebody bichrome graph} of $(M, \Gamma)$ to be 
\begin{equation}\label{eq:def-hb}
\fH_{\de}(M, \Gamma, Q, \spnTree) := (\nb(M^{(1)}_{\de}), \Gamma_{Q}(\spnTree))\,.
\end{equation}
\end{definition}

We can now define an invariant of bulk-admissible $\cC$-graphs in arbitrary compact oriented 3-manifolds.

\begin{definition}\label{def:tau}
Let $(\cC,\mtr,\pi_\1)$ be as in \eqref{eq:oc-invariant-alg-input}, $M$ a compact oriented 3-manifold with a bulk-admissible $\cC$-graph $\Gamma$ in $\bd M$. 
Assume that $(M, \Gamma) = \bigsqcup_{i=1}^{m}(M_i, \Gamma_i)$ is the decomposition of $(M, \Gamma)$ into connected components. We define 
\[\tau_{\cC}(M, \Gamma) := \cgpt^\mathrm{bi}(\bigsqcup_{i=1}^{m} \fH_{\de_{i}}(M_i, \Gamma_i, Q_i, \spnTree_i)) = \prod_{i=1}^{m} \cgpt^\mathrm{bi}(\fH_{\de_{i}}(M_i, \Gamma_i, Q_i, \spnTree_i))\]
where for each $1 \le i \le m$, $\de_{i}$ is a generic PLCW decomposition of $(M_{i}, \Gamma_{i})$, $\spnTree_{i}$ is a spanning tree of $\ourGraph_{\de_{i}}(M_{i})$, and $Q_{i}$ is a set of interior points in 2-cells on $\partial M$ that avoids $\Gamma_{i}$.
\end{definition}

\begin{thm}\label{thm:tau-well-def}
Let $(\cC,\mtr,\pi_\1)$ be as in \eqref{eq:oc-invariant-alg-input}, $M$ a compact oriented 3-manifold with a bulk-admissible $\cC$-graph $\Gamma$ in $\bd M$. Then the invariant $\tau_{\cC}(M, \Gamma)$ in Definition \ref{def:tau} does not depend on the choice of $\de_{i}$, $\spnTree_{i}$ and $Q_{i}$. Moreover, $\tau_{\cC}(M, \Gamma)$ depends only on the orientation preserving diffeomorphism class of $(M, \Gamma)$.
\end{thm}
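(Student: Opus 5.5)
By multiplicativity of $\cgpt^\mathrm{bi}$ (Corollary~\ref{cor:F-H-bi-properties}(3)) it suffices to treat a connected $(M,\Gamma)$. The plan is to prove independence of the three choices in the order $Q$, then $\spnTree$ for fixed $\de$, then $\de$. Diffeomorphism invariance will then follow formally. A diffeomorphism $\phi:(M,\Gamma)\to(M',\Gamma')$ carries generic decompositions, spanning trees and point sets to the same kind of data, and it carries $\fH_\de(M,\Gamma,Q,\spnTree)$ diffeomorphically onto $\fH_{\phi(\de)}(M',\Gamma',\phi(Q),\phi(\spnTree))$. Corollary~\ref{cor:F-H-bi-properties}(2) then gives equal values.

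\emph{Independence of $Q$.} Moving $q$ inside a boundary 2-cell $W$ past a strand of $\Gamma$ changes $\Gamma_Q(\spnTree)$ as follows. If $W\in\loopEdge$, the red belt $\partial U_q$ slides over that strand. If $W\in\spnTree$, the blue graph changes only by an isotopy in $\partial\nb(M^{(1)}_\de)$. In the first case I will use Corollary~\ref{cor:slide-X}. To apply it I must check that the complement of the belt in $\partial\nb(M^{(1)}_\de)$ is connected. For a boundary 2-cell this holds because the belt does not separate the surface: cutting along it yields $\partial\nb(M^{(1)}_\de)$ with $W$ attached, which is connected.

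\emph{Independence of $\spnTree$.} By Remark~\ref{rmk:cycle-switch}(5) it suffices to compare $\spnTree$ with $\spnTree(e\leadsto f)$, where $e$ and $f$ share a vertex $v$. The two bichrome graphs differ only in that the red belt around $e$ is replaced by the red belt around $f$. Suppose first that $v$ is a 3-cell $X$. The belts of all 2-cells of $\partial X$ together bound a punctured sphere in $\partial\nb(M^{(1)}_\de)$; it is the boundary of $X\cap\nb(M^{(1)}_\de)$ minus the discs, and it is isotopic in the surface to a configuration where any one belt is a band sum of the others. Using Corollary~\ref{cor:red-over-red}, I will slide the belt of $f$ over the belts of the other faces of $X$ that are present, i.e.\ those in $\loopEdge$, and turn it into the belt of $e$. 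The needed connectedness follows because removing a single non-tree edge keeps the graph connected, and the belts correspond exactly to non-tree edges. Any intermediate disc-bounding red loop would give the factor $Z_0\circ\lambda$ of Remark~\ref{rem:red-loop-via-C_PG}; the spanning-tree bookkeeping must rule this out. The case $v=v_0$ is handled the same way, with $\partial M$ playing the role of the sphere. The blue graph is then slid across the belts with Corollary~\ref{cor:slide-X}.

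\emph{Independence of $\de$.} By \cite[Thm.\,8]{Kir12} it is enough to treat one elementary subdivision. Genericity can be kept after a small isotopy of $\Gamma$, using the $Q$-independence just proved. I will split into cases by cell dimension.
\begin{itemize}
\item Subdividing a 1-cell only changes $\nb(M^{(1)})$ up to isotopy.
\item Subdividing a 2-cell $W$ into $W_\pm$ with a new edge $A_0$ adds a 1-handle to $\nb(M^{(1)})$, whose cocore disc meets the belts of $W_\pm$ once each. In $\ourGraph_\de$ this doubles the edge of $W$. Choose the new tree to contain exactly one of $W_\pm$, say $W_+$, exactly when it contained $W$. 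Then the new belt of $W_-$, or in the other case the belt of $W_+$, meets the cocore disc once, and Lemma~\ref{lem:hb-cap} caps it off. What remains is the old configuration.
\item Subdividing a 3-cell $X$ by a 2-cell $A_0$ adds a vertex and the edge $A_0$ to $\ourGraph_\de$, with the old faces redistributed. Putting $A_0$ into the tree gives a spanning tree with the same set of non-tree edges, so the handlebody and its red loops are unchanged.
\end{itemize}
Boundary cells are treated analogously, with the extra care that $Q$ and $\Gamma$ must be pushed consistently.

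\emph{Main obstacle.} The hardest step is the spanning-tree exchange. There I must verify that every red-over-red slide happens along a curve with connected complement, and that no red loop is ever forced to bound an empty disc. I would try first to reduce this to a local picture on $\partial(X\cap\nb(M^{(1)}_\de))$, and to use the fact that $e$ is the unique tree edge on the cycle through $f$ to guarantee admissibility of each slide.
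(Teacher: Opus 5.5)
\emph{The $Q$-step for $W\in\spnTree$.} Your overall architecture matches the paper, and so do your three subdivision cases and the diffeomorphism argument. The two steps that carry the real content, however, fail as written.

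In this first step you claim that moving $q$ changes the blue graph only by an isotopy of $\partial\nb(M^{(1)}_\de)$. That is false. The disc $U_q$ is never part of that surface, whatever the tree. Moving $q$ across a strand drags the strand across $U_q$, i.e.\ band-sums it with the belt of $W$, and that belt is typically essential.

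Here is a concrete example. Take $\bD^3$ with one 0-cell, the equator as a single 1-cell, hemispheres $N,S$ and one 3-cell, and $\spnTree=\{N\}$. Then $\nb(M^{(1)}_\de)$ is a solid torus carrying a red longitude, the belt of $S$. A small $P$-coloured circle in $N$ becomes a contractible circle or a blue longitude, depending on which side of it $q$ lies. These are not isotopic on the torus; they only agree after sliding over the red belt of $S$.

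The missing idea is the paper's device:
\begin{enumerate}
\item Attach 2-handles along the red loops to obtain $\fA$, the complement in $M$ of a thickening of $\spnTree\setminus\{v_0\}$.
\item Sliding over red loops is the same as isotopy in $\partial\fA$.
\item Let $e$ be the edge of $W$. The branch of $\spnTree\setminus\{e\}$ not containing $v_0$ thickens to a ball whose boundary meets $\partial\fA$ in a disc bounded by the belt of $W$. The strand is pushed across that disc.
\end{enumerate}

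\emph{The $\spnTree$-step.} Your punctured-sphere argument at the common vertex $X$ only works when $e$ is the only tree edge at $X$. In general that punctured sphere also has holes at the belts of the other tree edges at $X$, and these carry no red loop. Sliding the belt of $f$ over the red belts at $X$ therefore yields a band sum of the belt of $e$ with those tree belts, not the belt of $e$ itself. Three further problems:
\begin{itemize}
\item When the tree path from $X$ to $v_0$ does not start with $e$, the region one actually needs lies on the far side of $e$, away from $X$.
\item For $v=v_0$, the piece $\partial M\setminus\bigcup_q U_q$ is not planar once $\partial M$ has positive genus, so ``the same way'' fails there too.
\item $e$ is in general not the unique tree edge on the cycle through $f$.
\end{itemize}

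The fix is the same global device. Attach 2-handles to all red loops except the belt of $f$, and let $B$ be the component of $\spnTree\setminus\{e\}$ not containing $v_0$. Since $\spnTree(e\leadsto f)$ is a spanning tree, $f$ has exactly one endpoint in $B$. Hence the boundary of a thickening of $B$ meets $\partial\fA$ in an annulus bounded by the belts of $e$ and $f$. An isotopy across this annulus is a sequence of slides over belts of non-tree edges, to which Corollaries~\ref{cor:slide-X} and~\ref{cor:red-over-red} apply.

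Your worry about intermediate red loops bounding discs is moot. Each slide is an equality in $\sk^{\mathrm{bi}}_{\adm}$, so only the two end configurations matter.
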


The proof of the theorem will be carried out in the next subsection. Before that, let us comment on how $\tau_{\cC}$ can be seen as an extension of the handlebody invariant $\cgpt$.

\medskip

Recall the notion of admissible skein modules from Definition~\ref{def:sk-adm}. For a $\bd$-marked surface $(\Sigma,L)$ we define the \textit{1-admissible skein module} 
\begin{equation}
    \sk_{1\text{-adm}}(\Sigma,L)     
\end{equation}
in the same way as $\sk_{\adm}(\Sigma,L)$, except that we only require the graphs in $\sk_{1\text{-adm}}(\Sigma,L)$ to have a projective edge in \textit{at least one} connected component of $\Sigma$, rather than in all of them. Similarly, 1-admissible skein relations are relations for discs such that there is a projective edge in the complement somewhere on $\Sigma$, not necessarily in the component the disc is placed in. Of course, if $\Sigma$ is connected, $\sk_{1\text{-adm}}(\Sigma,L)=\sk_{\adm}(\Sigma,L)$. 

One difference between $\sk_{\adm}(\Sigma,L)$ and $\sk_{1\text{-adm}}(\Sigma,L)$ is that for $\sk_{\adm}(\Sigma,L)$, taking the disjoint union of graphs gives an isomorphism (see e.g.\ \cite[Lem.\,2.9]{RST24})
\begin{equation}
    \sk_{\adm}(\Sigma,L) \otimes \sk_{\adm}(\Sigma',L') \to \sk_{\adm}(\Sigma \sqcup \Sigma',L \sqcup L') \ ,
\end{equation}
while for $\sk_{1\text{-adm}}(\Sigma,L)$ it does not. Indeed, just take two copies of $\bS^2$. Then $\sk_{1\text{-adm}}(\bS^2,\emptyset)$ is 1-dimensional with basis given by the graph $\Gamma$ with one edge labelled $P_\1$ and morphisms $\pi_\1$ and $\iota_\1$ on the vertices. However, the disjoint union $[\Gamma \sqcup \Gamma]$ is zero in $\sk_{1\text{-adm}}(\bS^2 \sqcup \bS^2,\emptyset)$.

The reason to introduce $\sk_{1\text{-adm}}$ is that it describes the skein-invariance of $\tau_\cC$ for connected manifolds:

\begin{prop}\label{prop:skein}
Let $M$ be a connected 3-manifold. Then $\tau_\cC$ gives a well-defined linear map
\[
    \tau_\cC(M,-) : \sk_{\operatorname{1-adm}}(\bd M,\emptyset) \to \kk \ .
\]
\end{prop}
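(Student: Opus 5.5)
The plan is to extend $\tau_\cC(M,-)$ linearly from 1-admissible blue graphs to $\tilde{\sk}_{\operatorname{1-adm}}(\bd M,\emptyset)$ and then show it vanishes on every 1-admissible skein relation. First note that for connected $M$ a 1-admissible graph on $\bd M$ is exactly a bulk-admissible graph in the sense of Definition~\ref{def:adm-1}. So $\tau_\cC(M,\Gamma)$ is defined for each such $\Gamma$, and by Theorem~\ref{thm:tau-well-def} it does not depend on $\de$, $\spnTree$ or $Q$. We may therefore fix these choices conveniently for each relation separately.

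Now let $\sum_i a_i\Gamma_i$ be a 1-admissible skein relation for a square $K\subset\bd M$. The graphs $\Gamma_i$ agree outside $K$, and some projective edge $e$ of $\Gamma_i$ lies outside $K$, possibly on a different boundary component of $M$. We choose a generic PLCW decomposition $\de$ such that $K$ lies in the interior of a single boundary 2-cell $W$, which is possible after subdividing. We then place the point $q\in Q$ for $W$ outside $K$ and run the pushing procedure \eqref{eq:push-red}, \eqref{eq:push-to-hdle}. The pushing is an isotopy that carries $K$ to an embedded square $K'$ on $\bd\nb(M^{(1)}_\de)$. It can be done identically for all $\Gamma_i$, since they coincide outside $K$. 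The handlebody bichrome graphs $\fH_\de(M,\Gamma_i,Q,\spnTree)$ then differ only inside $K'$, and no red belt loop meets $K'$. Hence $\sum_i a_i\,\Gamma_i{}_Q(\spnTree)$ is a relation on the bichrome level supported in a red-free disc.

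It remains to show that $\cgpt^\mathrm{bi}(\nb(M^{(1)}_\de),-)$ kills this combination. The handlebody $\nb(M^{(1)}_\de)$ is connected, because $M$ is connected and hence so is its 1-skeleton. Its boundary is therefore connected, and the pushed copy of $e$ lies on the same surface as $K'$, outside $K'$. The relation is thus an honest admissible skein relation in $\tilde{\sk}_{\adm}$, except that red curves are present outside $K'$. We perform all red-blue modifications \eqref{eq:red-to-blue-mod} using paths that avoid $K'$. We may further assume that every resulting graph $(\Gamma_i)_{\mathrm{rb}}$ agrees outside $K'$. This is possible because $\mathrm{RB}$ is independent of choices, and $K'$ is a disc that paths can be routed around. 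The combination $\sum_i a_i(\Gamma_i)_{\mathrm{rb}}$ is then an admissible skein relation for $K'$, so it is zero in $\sk_{\adm}$, and Theorem~\ref{thm:CGP23} gives that $\cgpt$ vanishes on it. Linearity then gives the statement.

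The main obstacle is the step justifying that the relation on $\bd M$ is carried to an admissible relation on the handlebody boundary. There are two points. First, one must check that a single 2-cell can contain $K$ with $q$ outside it while $\de$ remains generic for every $\Gamma_i$. This matters because the $\Gamma_i$ differ inside $K$, so we need $\delta_1(\Sigma)\cap K=\emptyset$. Second, the red-blue modification paths must stay outside $K'$. For the first point I would use that $K$ is contained in a disc and take a subdivision in which $K$ avoids the 1-skeleton. For the second, I would use that the complement of a disc in the connected surface $\bd\nb(M^{(1)}_\de)$ is still connected.
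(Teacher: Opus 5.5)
Your proposal is correct and is essentially the paper's argument: fix a decomposition adapted to $K$ with $Q$ avoiding $K$, observe that $K$ ends up as a red-free disc on the connected surface $\bd\nb(M^{(1)}_\de)$, and conclude that the 1-admissible relation becomes an admissible relation respected by $\cgpt^{\mathrm{bi}}$. You also spell out how the red-blue paths are routed around $K'$, which the paper leaves as ``by construction of $\cgpt^{\mathrm{bi}}$''.

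The only real difference is where $K$ sits. The paper takes $K$ to be a neighbourhood of a boundary vertex, so it already lies in $\nb(M^{(1)}_\de)$ and the pushing leaves it alone. You put $K$ inside a 2-cell and carry it along the pushing isotopy, and either choice works. Note, though, that a subdivision alone cannot remove 1-cells that cross $K$. You should instead build a boundary decomposition adapted to $K$ and extend it to $M$, as the paper does via Proposition~\ref{prop:ext}.
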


\begin{proof}
Let $\Gamma$ be a 1-admissible graph on $\bd M$. Since $M$ is connected this implies (and is in fact equivalent to) $\Gamma$ being bulk-admissible. We need to show that the assignment $\Gamma \mapsto \tau_\cC(M,\Gamma)$ factors through $\sk_{1\text{-adm}}(\bd M,\emptyset)$, i.e., that it respects skein relations. Let $K \subset \bd M$ be an embedded disk as in \eqref{eq:skein-K-example}. Pick a PLCW decomposition $\delta'$ of $\bd M$ such that $K$ is a neighbourhood of a vertex of $\delta'$ which does not contain a full 1-cell or 2-cell of $\delta'$. Then extend $\delta'$ to a PLCW-decomposition of $M$ (cf.\ Proposition~\ref{prop:ext} below).
Pick the set of points $Q$ such that it avoids $K$. Pick an arbitrary spanning tree $\spnTree$ and let $\fH_{\de}(M, \Gamma, Q, \spnTree)$ be the associated handlebody bichrome graph. By 
Theorem \ref{thm:CGP23}, the assignment
\[
\tau_{\cC}(M, \Gamma) = \cgpt^\mathrm{bi}(\fH_{\de}(M, \Gamma, Q, \spnTree)) 
\]
is independent of these choices. By construction,
$\fH_{\de}(M, \Gamma, Q, \spnTree)$ inherits the embedding of $K$ and satisfies $K \cap \Gamma = K \cap \Gamma_{Q}(\spnTree)$. As the boundary of $\nb(M^{(1)}_{\de})$ is connected, the 1-admissible skein relation in $\sk_{1\text{-adm}}(\bd M,\emptyset)$ becomes an admissible skein relation in $\sk_{\adm}^\mathrm{bi}(\bd \nb(M^{(1)}_{\de}),\emptyset)$ which holds by construction of $\cgpt^\mathrm{bi}$.
\end{proof}

The next proposition shows that $\tau_\cC$ extends the handlebody invariant $\cgpt$ in Theorem~\ref{thm:CGP23} in the sense that for handlebodies $H$, both define the same linear form
\begin{equation}
    \tau_\cC(H,-) = \cgpt(H,-) : \sk_{\adm}(\bd H,\emptyset) \to \kk \ .
\end{equation}

\begin{prop}\label{prop:hdby}
Let $H$ be a handlebody. Then for any admissible $\cC$-graph $\Gamma$ in $H$, we have 
$\tau_{\cC}(H, \Gamma) = \cgpt(H, [\Gamma])$.
\end{prop}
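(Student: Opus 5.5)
The plan is to use Theorem~\ref{thm:tau-well-def} to compute $\tau_\cC(H,\Gamma)$ with one convenient choice of $\de$, $\spnTree$ and $Q$. I would then show that the resulting handlebody bichrome graph $\fH_\de(H,\Gamma,Q,\spnTree)$ is turned into $(H,\Gamma)$, up to orientation preserving diffeomorphism, by a sequence of red capping moves. After that, Lemma~\ref{lem:hb-cap}, Corollary~\ref{cor:F-H-bi-properties}(1),(2) and the absence of red curves at the end give $\tau_\cC(H,\Gamma)=\cgpt^\mathrm{bi}(H,[\Gamma])=\cgpt(H,[\Gamma])$.

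\emph{Choice of decomposition.} Realise the genus-$g$ handlebody $H$ as a single polyhedral 3-cell $X^{(3)}$ with $g$ pairs of boundary faces identified. The images $D_1,\dots,D_g$ of these pairs are the only interior 2-cells, and they are meridian discs of $H$. The remaining faces $W_0,W_1,\dots,W_r$ form a PLCW decomposition of $\bd H$, which I refine if necessary so that it is generic for $\Gamma$.
\begin{itemize}
\item The $(2,3)$-graph has two vertices, $v_0$ and $X^{(3)}$.
\item It has $g$ loop edges at $X^{(3)}$, one for each $D_i$.
\item It has $r+1$ edges between $v_0$ and $X^{(3)}$, one for each $W_j$.
\end{itemize}
Take the spanning tree $\spnTree=\{W_0\}$ and choose $Q$ arbitrarily. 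Then $\fH=\nb(M^{(1)}_\de)$ carries red belts along every $D_i$ and every $W_j$ with $j\ge1$.

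\emph{Capping.} Attaching a 2-handle along the belt of a 2-cell $W$ simply thickens $W$ back in. So capping along all red loops produces a thickening of $M^{(2)}_\de\setminus \Int(W_0)$, with the 3-cell omitted. This space is $H$ with a 3-ball removed, where the ball meets $\bd H$ in the disc $W_0$. It is therefore diffeomorphic to $H$, by a diffeomorphism that is the identity away from a neighbourhood of $W_0$. Choosing $Q$ so that $W_0$ and its point $q_0$ are away from $\Gamma$, this diffeomorphism carries the pushed-off graph back to $\Gamma$. Since no red curves remain, Corollary~\ref{cor:F-H-bi-properties}(1) identifies the final value with $\cgpt(H,[\Gamma])$.

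\emph{Main obstacle.} Lemma~\ref{lem:hb-cap} requires, at each step, a properly embedded disc in the current handlebody whose boundary meets the red loop being capped in exactly one point, and this must hold for some ordering of the caps. I would order the 2-cells as in a collapse of $M^{(2)}_\de\setminus\Int(W_0)$ onto a 1-dimensional spine. In a 2-complex obtained from the boundary of a 3-cell minus an open face, one can repeatedly find a 2-cell $W$ having a free 1-cell $e$, meaning that $e$ lies on $\bd W$ exactly once and on no other remaining 2-cell. The meridian disc of $\nb(e)$ then meets $\belt(W)$ exactly once, so $W$ can be capped, and afterwards $e$ and $W$ disappear from the relevant complex.

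Two points in this argument need care. First, the existence of a free edge at each stage: I would secure this by subdividing the decomposition, which is allowed by Theorem~\ref{thm:tau-well-def}, so that the boundary of $X^{(3)}$ minus $W_0$ is shellable. Second, if some belt is positioned badly, I would first apply the sliding moves of Definition~\ref{def:slide}. These are available because the relevant surfaces with a red curve removed stay connected, the red curves being nonseparating once a dual disc exists.
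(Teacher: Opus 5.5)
Your overall plan is the paper's: compute $\tau_\cC(H,\Gamma)$ with a convenient decomposition, cap every red loop, and conclude with Lemma~\ref{lem:hb-cap} and Corollary~\ref{cor:F-H-bi-properties}. The final identification of the fully capped handlebody with $(H,\Gamma)$ is fine. However, your decomposition differs from the paper's, and the step you yourself flag as the main obstacle is not correctly justified.

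\textbf{The gap.} The complex you must collapse is $K=M^{(2)}_\de\setminus\Int(W_0)=(\partial H\setminus\Int W_0)\cup D_1\cup\dots\cup D_g$. This is \emph{not} the boundary of a 3-cell minus an open face. It is the disc $\Delta=\partial X^{(3)}\setminus\Int W_0$ with the faces $D_i'$ and $D_i''$ identified in pairs. So in $K$, an edge $e\subset\partial D_i$ is incident to $D_i$ and also to the two boundary faces on either side of it in $\partial H$. These come from the two different preimages $e'\subset\partial D_i'$ and $e''\subset\partial D_i''$.

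A shelling or collapse of $\Delta$ therefore does not transfer to $K$. For example, it may remove $D_i'$ through $e'$ while the face adjacent to $e''$ is still present. Then the meridian disc of $\nb(e)$ also crosses the red belt of that face, so you no longer have a disc dual to $\belt(D_i)$ alone. The proof of Lemma~\ref{lem:hb-cap} cuts along this disc, which Corollary~\ref{cor:F-H-bi-properties}(4) only permits away from red curves. Worse, $D_i'$ and $D_i''$ are a single cell of $K$, so a collapse sequence of $\Delta$ is not even a sequence of moves in $K$. Subdividing does not address this, and the fallback to sliding moves is not an argument.

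\textbf{The fix.} Use an explicit order; it needs no subdivision and no sliding. Take the $2g$ glued faces pairwise disjoint. Consider the dual graph whose vertices are $W_0,\dots,W_r$ and whose edges are the 1-cells of $\partial H$ lying on no $\partial D_i$. It is connected, because $\partial H$ cut along the meridians is $\partial X^{(3)}$ minus the open faces $D_i',D_i''$, a connected sphere with $2g$ holes.
\begin{enumerate}
\item Cap $W_1,\dots,W_r$ in breadth-first order from $W_0$, each through the edge joining it to its parent. That edge lies on no $\partial D_i$, meets the face being capped exactly once, and its other side is already capped or is the tree face $W_0$. Hence its meridian disc meets only the red loop being capped, and only once.
\item Afterwards every edge of $\partial D_i$ is incident only to $D_i$, so the $D_i$ can be capped last.
\end{enumerate}
With this order your proof goes through.

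\textbf{Comparison with the paper.} The paper decomposes $H$ as a ball $A_0$ plus $g$ handle 3-cells $A_j$, each with a single boundary face $B_j$. The spanning tree $\{B_0,\dots,B_g\}$ is then a star at $v_0$, and the only red loops are the $2g$ belts of the interior meridian discs $W_{j,\pm}$. Each of these has a boundary 1-cell whose other incident 2-cells are tree faces, so all caps are immediate and independent, and no ordering question arises. Your single-3-cell decomposition makes the (2,3)-graph trivial, but it puts red loops on all boundary faces but one, and shifts all the work into the capping order.
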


\begin{figure}[tb]
\raisebox{4em}{a)} 
 \hspace{1em} \includegraphics[valign=c,scale=0.6]{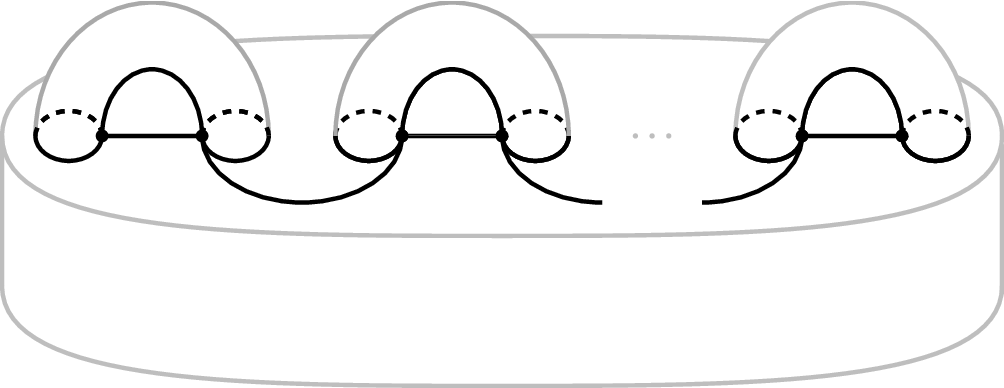}
\\[1em]
\raisebox{4em}{b)} 
\includegraphics[valign=c,scale=1]{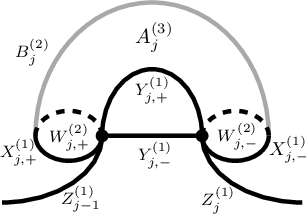} 
\hspace{1em}
\raisebox{4em}{c)} 
\hspace{1em}
\includegraphics[valign=c,scale=0.4]{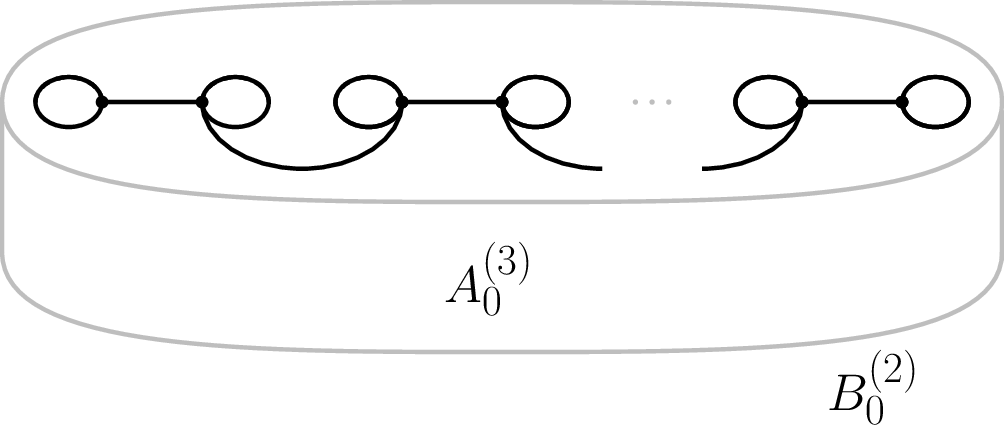} 
\caption{
a) PLCW decomposition of a genus-$g$ handlebody represented as $g$ handles attached to a 3-ball. The black lines represent the 1-cells, and the gray lines are only used to indicate the shape of the handlebody. \\
b) For $j=1, ..., g$, the $j$-th handle has two 0-cells, four 1-cells $\oneCell{X}_{j,\pm}$, $\oneCell{Y}_{j, \pm}$, three 2-cells $\twoCell{W}_{j,\pm}$ and $\twoCell{B}_{j}$ and one 3-cell $\threeCell{A}_j$. 
All the handles are attached to a ball, which has additional 1-cells $\oneCell{Z}_{j}$ connecting the 0-cells of nearby handles. \\
c) The ball itself has one 2-cell $\twoCell{B}_{0}$ and one 3-cell $\threeCell{A}_0$.}
\label{fig:genus-g-PLCW}
\end{figure}

\begin{proof}
Assume $H$ is of genus $g$. Consider the PLCW decomposition $\de$ of $(H, \Gamma)$ in Figure~\ref{fig:genus-g-PLCW}.
Perturb $\Gamma$ so that the above PLCW decomposition is generic. It is easy to see that the (2,3)-graph $\ourGraph_{\de}(H, \Gamma)$ of $(H, \Gamma)$ with respect to $\delta$ is of the following form:
\begin{equation}\label{eq:4-7} \includegraphics[valign=c,scale=0.9]{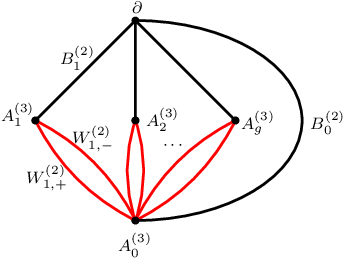}
\end{equation}
Moreover, one immediately sees that a spanning tree $\spnTree$ of $\ourGraph_{\de}(H, \Gamma)$ can be taken to contain only edges $\twoCell{B}_{j}$ for $j=0, 1, ..., g$. As an illustration, in \eqref{eq:4-7}, the black edges are edges in $\spnTree$, and the red edges correspond to 2-cells whose belts are to be decorated by red loops in $\nb(H_{\de}^{(1)})$ (see \eqref{eq:4-8}). 

Choose interior points $Q$ of the boundary 2-cells $B^{(2)}_j$ arbitrarily, then $\nb(H_{\de}^{(1)})$ is of the following form: 
\begin{equation}\label{eq:4-8} 
\includegraphics[valign=c,scale=0.5]{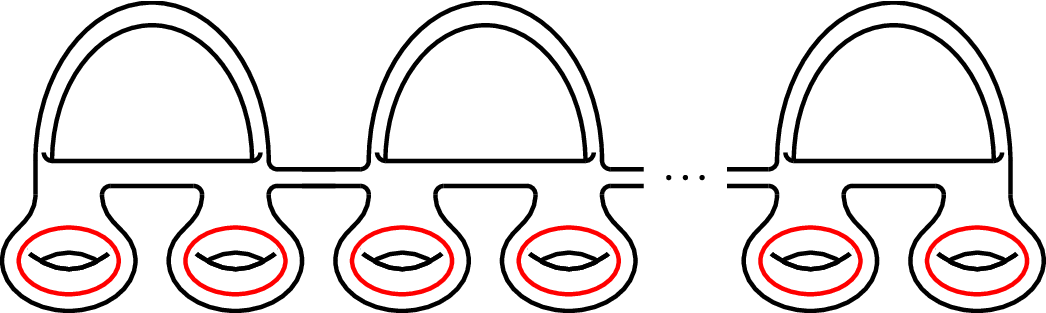}
\end{equation}
where the red loops are added to the handles corresponding to the boundary of $\twoCell{W}_{j,\pm}$, which are not in the spanning tree $\spnTree$ by construction. 
In other words, we have 
\[\fH_{\de}(H, \Gamma, Q, \spnTree) = (\nb(H_{\de}^{(1)}), \Gamma_Q(\spnTree)) = (\nb(H_{\de}^{(1)}), \Gamma\cup\bigcup_{j=1}^{g} \gamma_{j,\pm})\]
where $\gamma_{j,\pm}$ stands for the red loops along $\belt(\twoCell{W}_{j,\pm})$.
Note also that the holes which are not decorated by red loops are in one-to-one correspondence with the 2-cells $\twoCell{B}_{j}$ for $j=1, ..., g$. 
By applying the capping move \eqref{eq:cap-dig-1}, we have 
\[\Cap_{\{\gamma_{j,\pm}\mid 1\le j \le g\}}(\fH_{\de}(H, \Gamma, Q, \spnTree)) \cong (H, \Gamma)\]
which implies that 
\[
\tau_{\cC}(H, \Gamma) = \cgpt^\mathrm{bi}(\fH_{\de}(H, \Gamma, Q, \spnTree)) = \cgpt^\mathrm{bi}(\Cap_{\{\gamma_{j,\pm}\mid 1\le j \le g\}}(\fH_{\de}(H, \Gamma, Q, \spnTree))) = \cgpt(H, \Gamma) \ ,
\]
where the second equality is invariance of $\cgpt^\mathrm{bi}$ under the capping move (Lemma \ref{lem:hb-cap}), and the third equality is the observation that after the capping move no red curves remain, and so $\cgpt^\mathrm{bi}$ is just $\cgpt$ (Corollary~\ref{cor:F-H-bi-properties}\,(1)).
\end{proof}

\subsection{Comparison to the closed 3-manifold invariant in \texorpdfstring{\cite{CGPT20-Kup}}{}}\label{sec:CGPT-compare}

The invariant $\tau_{\cC}$ can be used to define an invariant of closed 3-manifolds in the following way: Let $M$ be a closed connected oriented 3-manifold, and let $\bB \subset M$ be an embedded 3-ball. 
Consider the $\cC$-graph $I$ formed of a single edge labelled $P_\1$ and two vertices labelled $\iota_\1$ and $\pi_\1$ respectively. Embed $I$ in $\bd (M \setminus \bB)$. Then 
\begin{equation}\label{eq:inv-of-M}
\mathscr{I}_{\cC}(M) := \tau_{\cC}(M\setminus \bB, I) \end{equation}
is an invariant of closed connected 3-manifolds. 
For example, if $M=\bS^3$ then $M\setminus \bB = \bD^3$, the closed 3-disc, and we find
\begin{equation}
  \mathscr{I}_{\cC}(\bS^3) 
  \overset{(1)}=  
  \tau_{\cC}(\bD^3, I) 
  \overset{(2)}= 
  \cgpt(\bD^3, I) 
  \overset{(3)}= 
  \mtr_{P_\1}(\iota_\1 \circ \pi_\1) 
  \overset{(4)}= 1 \ .
\end{equation}
Step 1 is the definition of $\mathscr{I}_{\cC}$, step 2 uses that $\bD^3$ is a handlebody and so by Proposition~\ref{prop:hdby}, $\tau_\cC$ agrees with $\cgpt$. Step 3 uses the relation between $\cgpt$ and the modified trace in Theorem~\ref{thm:CGP23}\,(2), and finally step 4 is the normalisation condition \eqref{eq:t-i-pi-norm}.

In \cite[Thm.\,2.4]{CGPT20-Kup}, the authors define an invariant $\mathcal{K}_{\cC}(M)$ of closed connected 3-manifolds $M$ using Heegaard splittings. Namely, let $M = H_\alpha \cup_\Sigma H_\beta$, where $H_\alpha$ and $H_\beta$ are handlebodies and $\Sigma$ is their common boundary. Let $\beta_1,\dots,\beta_g$ be a reducing set of bounding circles for $H_\beta$ (each $\beta_i$ bounds an embedded disc in $H_\beta$, one per handle, see \cite{CGPT20-Kup} for details). Build an admissible bichrome graph $\Gamma_\beta$ on $\Sigma = \partial H_\alpha$ by declaring the $\beta_i$ to be red loops and adding the graph $I$ from above anywhere away from the red loops. Then $\mathcal{K}_{\cC}(M) = \cgpt^\mathrm{bi}(H_\alpha,\Gamma_\beta)$.
We claim that this agrees with the invariant from the previous paragraph: 

\begin{proposition}\label{prop:compare-closed}
For every closed connected 3-manifold $M$, we have
$\mathcal{K}_{\cC}(M) = \mathscr{I}_{\cC}(M)$. 
\end{proposition}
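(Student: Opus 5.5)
Since Theorem~\ref{thm:tau-well-def} makes $\tau_\cC(M\setminus\bB,I)$ independent of all choices, the plan is to pick a PLCW decomposition of $M\setminus\bB$ adapted to a Heegaard splitting $M = H_\alpha\cup_\Sigma H_\beta$. Then $\fH_\de(M\setminus\bB, I, Q, \spnTree)$ should reduce, after capping moves (Lemma~\ref{lem:hb-cap}), sliding moves (Corollaries~\ref{cor:slide-X}, \ref{cor:red-over-red}) and diffeomorphism invariance (Corollary~\ref{cor:F-H-bi-properties}), to $(H_\alpha,\Gamma_\beta)$. I would take the ball $\bB$ inside $H_\beta$.

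\emph{Step 1: the decomposition.} Decompose $H_\alpha$ as in Figure~\ref{fig:genus-g-PLCW}: a central ball with cells $\twoCell{B}_0,\threeCell{A}_0$ and handles with cells $\oneCell{X}_{j,\pm},\oneCell{Y}_{j,\pm},\twoCell{W}_{j,\pm},\twoCell{B}_j,\threeCell{A}_j$. View $H_\beta$ minus a ball as $\Sigma\times[0,1]$ with $g$ two-handles attached along the $\beta_i$ (sitting in $\Sigma\times\{1\}$), plus a final 3-cell; removing the ball leaves this last 3-cell with a boundary 2-cell. Concretely, the 2-cells of $H_\beta\setminus\bB$ are the cores $D_{\beta_i}$ of these two-handles, a 2-cell $\twoCell{E}$ on the new sphere $\bd\bB$, and some auxiliary 2-cells needed to make the complex PLCW, namely vertical 2-cells over the 1-cells of $\Sigma$. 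Then $\nb(M^{(1)})$ is $\nb(H_\alpha^{(1)})$ together with the extra 1-handles coming from those vertical pieces and from the 1-skeleton of $\bd\bB$.

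\emph{Step 2: the spanning tree and reductions.} Choose the spanning tree $\spnTree$ of $\ourGraph_\de$ so that the red loops that appear are: the belts of the $\twoCell{W}_{j,\pm}$; the belts of the $D_{\beta_i}$, which are parallel copies of $\beta_i$ pushed onto $\bd\nb(M^{(1)})$; and the belts of the auxiliary vertical 2-cells, each of which runs over exactly one extra 1-handle once. The reductions then go as follows.
\begin{itemize}
\item Cap each auxiliary red belt together with its 1-handle via Lemma~\ref{lem:hb-cap}. This is exactly the geometric condition of the capping move: the cocore disc of the 1-handle meets the belt in a single point.
\item Cap the belts $\gamma_{j,\pm}$ of the $\twoCell{W}_{j,\pm}$, exactly as in the proof of Proposition~\ref{prop:hdby}.
\end{itemize}
What remains is $H_\alpha$ with the red loops $\beta_i$ and with $I$ on $\bd\bB$, now isotoped onto $\Sigma$. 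This is $(H_\alpha,\Gamma_\beta)$, up to diffeomorphism. By Corollary~\ref{cor:F-H-bi-properties}(2), its value is $\mathcal{K}_\cC(M)$.

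\emph{Main obstacle.} The hard part is choosing the PLCW structure and the spanning tree so that every 2-cell not in $\spnTree$ is either a $\beta$-disc or admits a cancelling 1-cell disjoint from the other red loops. Concretely, this is the combinatorics of the graph $\ourGraph_\de$: $v_0$ has only the single boundary 2-cell $\twoCell{E}$ attached, and the tree edges must account for the vertical 2-cells. If a direct cancellation fails, I would first try sliding red loops over one another. Corollary~\ref{cor:red-over-red} permits this, because the relevant complements are connected once $I$ is present. The aim of the slides is to untangle the loops until the capping discs exist. A fallback would be to compute $\mathcal{K}_\cC$ instead via a Heegaard splitting induced by the PLCW decomposition itself, with $H_\alpha=\nb(M^{(1)})$ and the belts as $\beta$-curves. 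In that case the only discrepancy is the belts of the spanning-tree cells. These form a non-reducing surplus whose omission I would justify using the independence of $\mathcal{K}_\cC$ from the choice of reducing set, together with capping.
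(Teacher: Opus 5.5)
There is a genuine gap. Your main route never carries out the step that holds all the content. That step is a PLCW decomposition of $M\setminus\bB$ adapted to an arbitrary Heegaard splitting, together with a spanning tree, such that every non-tree belt other than the $\beta$-belts can be removed by a capping move. You flag this yourself as the ``main obstacle'' and only suggest trying slides, which is not an argument.

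Step~1 also does not hold up as written. Deleting a ball from the interior of the last 3-cell of $H_\beta$ leaves a shell $S^2\times[0,1]$, not a 3-cell. So $\bd\bB$ is not just one 2-cell $E$ with $v_0$ of degree one. Moreover, the $\beta_i$ do not lie in the 1-skeleton of the decomposition of $\bd H_\alpha$ from Figure~\ref{fig:genus-g-PLCW}, so the layer $\Sigma\times[0,1]$ cannot be a plain product cell structure. Any refinement creates further non-tree 2-cells whose belts are not in your list, for instance several 2-cells between one $\threeCell{A}_j$ and the layer. The following claims are then all unverified:
\begin{itemize}
\item each auxiliary belt crosses a distinct cancellable 1-handle exactly once;
\item the cancellations can be performed in sequence;
\item the surviving belts become isotopic to the $\beta_i$ on $\Sigma$.
\end{itemize}

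Your fallback is the right idea and is what the paper does. However, it needs to be stated correctly, and it is missing the part about the ball. Since $\cK_\cC$ is independent of the splitting and the reducing set, take any PLCW decomposition $\de$ of the closed manifold $M$ and set $H_\alpha=\nb(M^{(1)}_\de)$. Then $H_\beta$ is a regular neighbourhood of the dual graph $\ourGraph_\de(M)$. The belt of a 2-cell $\twoCell{W}$ is the boundary of $\twoCell{W}\cap H_\beta$, which is a meridian disc of the corresponding dual edge. Hence the belts of the $g=|\de_2|-|\de_3|+1$ 2-cells not in a spanning tree $\spnTree$ already form a reducing set: cutting $H_\beta$ along their discs leaves a neighbourhood of $\spnTree$, which is a ball. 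There is no ``surplus'' to discard, and no capping is needed at this stage.

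The actual work is passing from $M$ to $(M\setminus\bB, I)$. The paper does this as follows.
\begin{itemize}
\item Pick a 2-cell $\twoCell{A}$ between two 3-cells and a 1-cell $\oneCell{L}\subset\bd\twoCell{A}$.
\item Remove a ball sitting in $\twoCell{A}$ and touching $\oneCell{L}$ at a new vertex. Then $\bd\bB$ consists of two hemispheres $\twoCell{C}_\pm$ glued along two new 1-cells $\oneCell{X}_{1,2}$.
\item The (2,3)-graph gains $v_0$ with two edges, and the tree is extended by one of them.
\item The resulting handlebody is $H_\alpha$ with one extra 1-handle. It carries one extra red belt, that of the other hemisphere, which meets the cocore of that handle once.
\end{itemize}
A single application of Lemma~\ref{lem:hb-cap} then yields $(H_\alpha,\Gamma_\beta)$ with $I$ on $\Sigma$.
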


\begin{proof}
Start by picking a PLCW-decomposition $\delta$ of $M$ and consider its (2,3)-graph $\ourGraph_{\de}(M)$. Its genus is $g = |\delta_2| - |\delta_3| + 1$, and this is also the genus of the two complementary handlebodies $H_\alpha = \nb(M_\delta^{(1)})$ and $H_\beta$ the closure of the complement $M \setminus H_\alpha$. In the construction of $\mathcal{K}_{\cC}(M)$ we add $g$ red curves to $H_\alpha$, one per handle of $H_\beta$. Let us see how that comes out of the the spanning tree construction. $\ourGraph_{\de}(M)$ has $|\delta_3|$ vertices and $|\delta_2|$ edges. By Remark~\ref{rmk:cycle-switch}\,(1), every spanning tree $\fT$ of $\ourGraph_{\de}(M)$ has $|\delta_3|-1$ edges. This would correspond to $|\ourGraph_{\de}(M)| - |\fT| = |\delta_2| - |\delta_3|+1 = g$ edges, as required. Indeed, pick a basis $C_1,\dots,C_g$ of the cycles in $\ourGraph_{\de}(M)$, and construct a spanning tree $\fT$ by omitting one edge per cycle. Then, conversely, in the construction of $\tau_\cC$ we would add one red loop per omitted edge, which precisely gives the red loops on $H_\alpha$ in the construction of $\mathcal{K}_{\cC}(M)$.

Of course, $M$ has empty boundary, so this is not yet the construction of $\tau_\cC$. Instead we should take a PLCW decomposition of $M \setminus \bB$ with graph $I$ on the boundary. We modify the PLCW-decomposition $\delta$ by arbitrarily picking a 2-cell $A^{(2)}$ and a 1-cell $L^{(1)}$ in the boundary of $A^{(2)}$. We may assume that $A^{(2)}$ sits on the boundary of two distinct 3-cells $D^{(3)}_{1,2}$.  
Modify $\delta$ to $\delta'$ as follows:
\begin{equation}
\includegraphics[valign=c, scale=0.6]{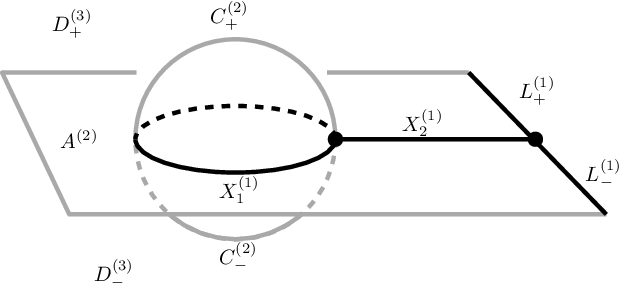}    
\end{equation}
Here we have split the 1-cell $L^{(1)}$ into $L^{(1)}_\pm$ by adding a new vertex, added two 1-cells $X^{(1)}_{1,2}$ and 2-cells $C^{(2)}_{\pm}$, as well as one 0-cell (in addition to the vertex on $L^{(1)}$). Note that the region between $C^{(2)}_{+}$ and $C^{(2)}_{-}$ is empty as this is where we have removed the 3-ball $\bB$.
The (2,3)-graph changes locally as follows:
\begin{equation}
\includegraphics[valign=c, scale=0.6]{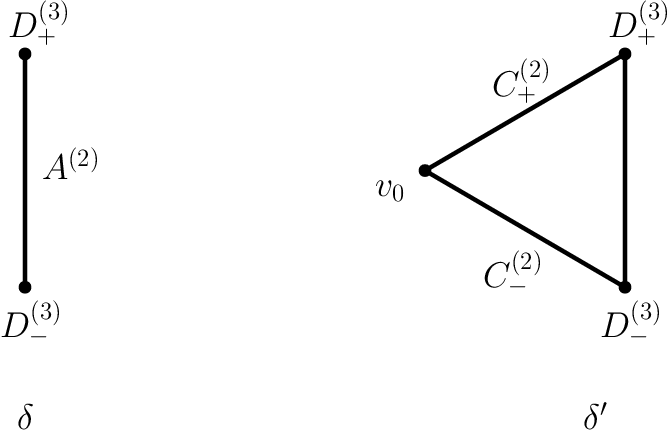}    
\end{equation}
For the new spanning tree $\fT'$ we add one edge, say $X^{(1)}_1$, to $\fT$. Then locally, near the neighbourhood of $L^{(1)}$, $H_\alpha$ changes as follows
\begin{equation}
\includegraphics[valign=c, scale=0.4]{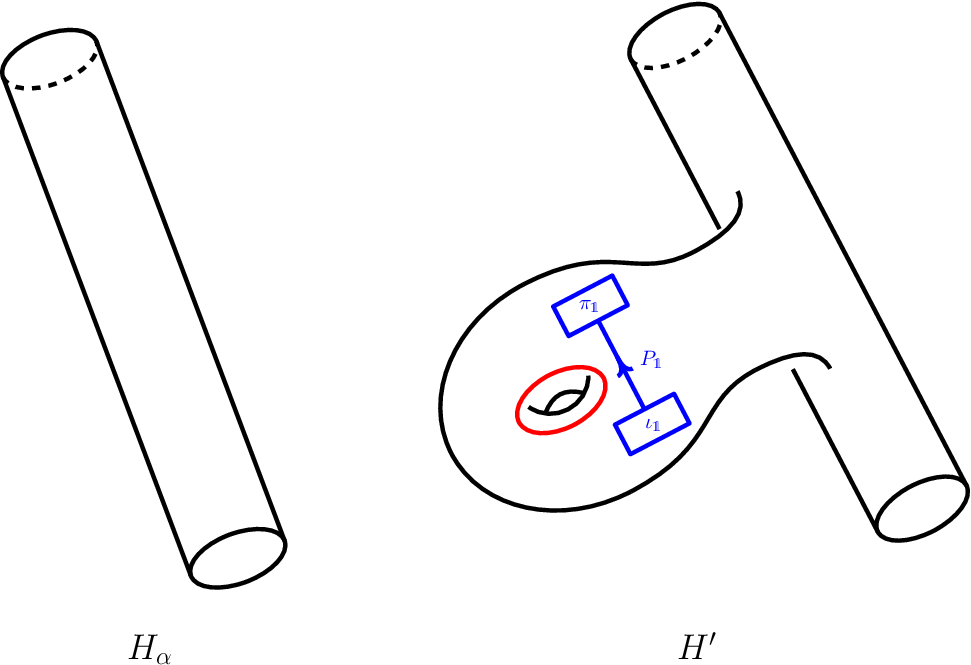}    
\end{equation}
Outside of the displayed region, $H_\alpha$ and $H'$ agree. Executing a red capping move (Lemma~\ref{lem:hb-cap}), we obtain $H_\alpha$ with graph $\Gamma$. 
\end{proof}

\begin{remark}\label{rmk:atfd-1}
Consider the case that $\cC$ is in addition semisimple with non-zero global dimension, and recall from Remark~\ref{rem:ssi-chromatic} the chromatic map $c_{P,G}$.
Applying the red-to-blue map to a red loop is equivalent to replace the red loop by $G$ with a coupon $\sum_{U\in\irr(\cC)} \dim(U) \id_U$ inserted on it. Note that since $\1$ is projective and has non-zero dimension, every $\cC$-graph (including the empty graph) on the boundary of a 3-manifold is automatically bulk-admissible, because we can add contractible loops coloured by $\1$ wherever needed. 

By discussions in \cite[Sec.\ 3.2]{atfd1} and \cite[Sec.\ 2]{CGPT20-Kup}, for any graph $\Gamma$ on a multi-handlebody $H$, the invariant $\cgpt(H, \Gamma)$ is a non-zero scalar multiple of the alterfold invariant $Z_{\cC}(H \subset \bS^3, \bd H, \Gamma)$. Here, $H$ is embedded in $\bS^3$ as the $B$-coloured region, and its complement is the $A$-coloured region. Consequently, by Proposition \ref{prop:hdby}, the alterfold invariant $Z_{\cC}(H \subset \bS^3, \bd H, \Gamma)$ is a scalar multiple of $\tau_{\cC}(H, \Gamma)$. 
Moreover, all the invariants above generalise the Turaev-Viro invariant in the following sense: Let $M$ be a closed 3-manifold, then by \cite[Thm.\ 2.7]{CGPT20-Kup} and \cite[Prop.\ 3.14]{atfd1} and Proposition \ref{prop:compare-closed}, we have 
$\operatorname{TV}_{\cC}(M) = Z_{\cC}(M_{B}, \emptyset, \emptyset) = \cK_{\cC}(M)/\mathrm{Dim}(\cC) = \mathscr{I}_{\cC}(M)/\mathrm{Dim}(\cC)$, 
where $M_B$ is the $B$-coloured alterfold $M$ (with empty $A$-coloured region).

More generally, consider a 3-manifold $M$ with a $\cC$-graph $\Gamma$ on its boundary. Construct a 3-alterfold as follows: for each connected component $J_i$ of $\bd M$, let $H_i$ be a handlebody of the same genus as $J_i$, and define $M' = M \cup \bigcup_i H_i$. Let $M$ be the $B$-coloured region and the $H_i$'s are the $A$-coloured region, we get an alterfold $M'$ with a $\cC$-graph $\Gamma$ on its separating surface. Then $Z_{\cC}(M', \bd M, \Gamma)$ is a scalar multiple of $\tau_{\cC}(M, \Gamma)$. We mention the following key ingredients in the verification of this statement, leaving the details to the interested reader: (1) the red capping move of $\cgpt$ corresponds to alterfold Move 1, and the cutting move corresponds to alterfold Move 2; (2) the difference in the scalar multiple of the two invariants is caused by the use of spanning trees, see Remark \ref{rmk:cycle-switch} (1).
\end{remark}

\subsection{Proof of Theorem \ref{thm:tau-well-def}}
Throughout this subsection, let $(\cC,\mtr,\pi_\1)$ be as in \eqref{eq:oc-invariant-alg-input}, and let $(M, \Gamma)$ be a connected compact oriented 3-manifold with a bulk-admissible $\cC$-graph $\Gamma$ on the boundary $\bd M$. 
We prove Theorem \ref{thm:tau-well-def} by showing the independence of $\tau_{\cC}$ of the choices of the spanning trees, the interior points of boundary 2-cells and the PLCW decompositions. We will continue using the conventions in the previous subsection. 

We will use sliding moves (Definition \ref{def:slide}) repeatedly in the arguments below, and we have to make sure that the condition in Corollaries \ref{cor:slide-X} and \ref{cor:red-over-red} is satisfied, namely that the complement of the red loop across which we are sliding a blue or red strand is connected. We note here once that this is indeed the case for all red loops on $\fH_{\de}(M, \Gamma, Q, \spnTree)$: the complement of each individual attaching belt of a 2-cell in $\nb(M^{(1)}_{\de})$ is connected.

\begin{lemma}\label{lem:T-indep}
Let $\spnTree$ and $\spnTree'$ be two spanning trees of $\ourGraph_{\de}(M)$, then $\fH_{\de}(M, \Gamma, Q, \spnTree')$ can be obtained from $\fH_{\de}(M, \Gamma, Q, \spnTree)$ by sliding moves among the red loops on the boundary surface $\Sigma := \bd\nb(M^{(1)}_{\de})$.
\end{lemma}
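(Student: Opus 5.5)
By Remark~\ref{rmk:cycle-switch}\,(5), the graph $T_0(\ourGraph_{\de}(M))$ is connected. So it suffices to treat the case $\spnTree' = \spnTree(e \leadsto f)$, where $e \in \spnTree$ and $f \notin \spnTree$ share a common vertex $v$ of $\ourGraph_{\de}(M)$. In this case the red loops of $\fH_{\de}(M,\Gamma,Q,\spnTree)$ and $\fH_{\de}(M,\Gamma,Q,\spnTree')$ agree, except that the former has a red loop along $\belt_Q(f)$ while the latter has one along $\belt_Q(e)$. We must therefore convert the red loop along $\belt_Q(f)$ into one along $\belt_Q(e)$ by sliding moves over the other red loops. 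All sliding moves are legitimate, since each individual belt has connected complement in $\Sigma$, as noted before the lemma.

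The plan is to use the geometry around the vertex $v$, splitting into two cases.

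\textbf{Case $v=X^{(3)}$, a 3-cell.} The belts of all 2-cells in $\partial X^{(3)}$, counted with multiplicity for 2-cells appearing twice, bound a sphere-with-holes $S_X = \partial X^{(3)} \cap \Sigma$. This is the boundary of the 3-cell minus the neighbourhood of its 1-skeleton. Hence in $\Sigma$ the belt of $f$ is isotopic to a band sum of the belts of all other 2-cells on $\partial X^{(3)}$. Every other 2-cell $g$ of $\partial X^{(3)}$ is an edge at $v$. We then argue that sliding the red loop $\belt(f)$ successively over all red loops $\belt(g)$ with $g \neq e,f$ at $v$ produces, after isotopy, a curve parallel to $\belt(e)$. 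The key point is that these $g$ all lie outside $\spnTree$, since the tree $\spnTree$ has at most ...

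This is the step to check carefully, and it is the main obstacle: some edges $g$ at $v$ may lie in $\spnTree$ and therefore carry no red loop. My first fix is to refine the reduction: rather than only requiring $e,f$ to share a vertex, also work with the cycle $C$ through $f$. Replacing $e$ by an edge of $\spnTree$ adjacent to $f$ along $C$ is already what Remark~\ref{rmk:cycle-switch}\,(5) does. The remaining tree edges at $v$ besides $e$ do carry no red loop, and I expect this to be handled differently. Instead of sliding across the whole sphere $S_X$, I will use that $\belt(f)$ and $\belt(e)$ together with the other belts at $v$ cut $S_X$. I will then slide $\belt(f)$ only across the red loops of non-tree edges, and handle the tree edges by the fact that a tree edge $g \neq e$ at $v$ leads into a subtree not containing $f$'s other end. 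If necessary, I will repeat the argument at the far vertex of $g$, moving through the whole subtree, where all boundary belts of the union of 3-cells in that subtree again bound a planar surface.

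\textbf{Case $v=v_0$.} Here the boundary belts $\partial U_q$ are the relevant curves. The union of the enlarged discs $U_q$ together with $\Sigma \cap \partial M$ exhibits all boundary belts, for a component of $\partial M$, as bounding a planar surface. The same band-sum argument then applies componentwise. Components of $\partial M$ not containing both $e$ and $f$ should be treated as in the subtree argument.

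Finally, Corollary~\ref{cor:red-over-red} makes each slide valid, and the blue graph $\Gamma$ is untouched except by isotopy. Thus $\fH_{\de}(M,\Gamma,Q,\spnTree')$ is obtained from $\fH_{\de}(M,\Gamma,Q,\spnTree)$ by sliding moves, and induction along a path in $T_0$ concludes the proof.
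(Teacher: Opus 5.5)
\textbf{Verdict: there is a genuine gap.} Your reduction to an exchange $\spnTree'=\spnTree(e\leadsto f)$ at a common vertex $v$ matches the paper, and so does the idea of realising slides as isotopies after capping the other red loops. The local argument at $v$ is where it fails.

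As you noticed, the other edges at $v$ need not be non-tree edges. Your repair caps each tree edge $g\neq e$ at $v$ using the subtree beyond $g$. But that subtree's surface is a disc because the subtree avoids $v_0$, not because it avoids the far endpoint of $f$. Suppose the tree path from $v$ to $v_0$ leaves $v$ through an edge $g\neq e$, which is the paper's Case~2.
\begin{itemize}
\item The subtree beyond $g$ then contains $v_0$. Its surface contains $\bd M$ minus the enlarged discs $U_q$, which in general has positive genus and several components.
\item So $\belt(g)$ cannot be capped. Near $v$ the curves $\belt(f)$, $\belt(e)$, $\belt(g)$ cobound a pair of pants, and $\belt(f)$ is not isotopic to $\belt(e)$ there.
\item The same problem breaks your case $v=v_0$. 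The boundary belts of a component of $\bd M$ bound a planar surface only if that component is a sphere; take $M$ a solid torus for a counterexample. So no band-sum argument at $v_0$ is available.
\end{itemize}

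\textbf{The missing idea is to work on the side of $e$ away from $v_0$.} Let $B$ be the component of $\spnTree\setminus\{e\}$ not containing $v_0$; it consists of 3-cells only.
\begin{itemize}
\item Since $\spnTree(e\leadsto f)$ is again a spanning tree, $f$ has exactly one endpoint in $B$, and $e$ is the only tree edge leaving $B$.
\item Glue the spheres-with-holes of the 3-cells of $B$ along the internal tree belts. This gives a connected planar surface.
\item Cap all red loops except $\belt(f)$. Exactly two holes remain, $\belt(e)$ and $\belt(f)$.
\end{itemize}
The result is an annulus in $\bd\fA$, which gives the required isotopy and hence the sliding moves.

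This is the paper's argument. It splits into Cases~1 and~2 according to whether $e$ is the edge at $v$ on the tree path to $v_0$; your construction reproduces only Case~1. With this choice of $B$ the argument works for any exchange $\spnTree(e\leadsto f)$, so the common-vertex reduction is not actually needed.
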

\begin{proof}
By construction, the only difference between $\fH_{\de}(M, \Gamma, Q, \spnTree)$ and $\fH_{\de}(M, \Gamma, Q, \spnTree')$ is that the red loops in $\Gamma(\spnTree)$ and $\Gamma(\spnTree')$ are at different positions on the boundary surface $\Sigma$, which correspond to the different subsets of 2-cells $\loopEdge$ and $\loopEdge'$ respectively. 

By Remark \ref{rmk:cycle-switch}\,(5), we only have to consider the case when $\spnTree$ and $\spnTree'$ are different in 2 edges $\{e\} = \spnTree\setminus\spnTree'$ and $\{e'\} =\spnTree'\setminus\spnTree$ that share a common vertex $v \in e \cap e'$. 
Let $L_e$ and $L_{e'}$ denote belts of the 2-cells corresponding to $e$ and $e'$ respectively. 

In this case, the only difference between $\fH_{\de}(M, \Gamma, Q, \spnTree)$ and $\fH_{\de}(M, \Gamma, Q, \spnTree')$ is that in $\fH_{\de}(M, \Gamma, Q, \spnTree)$, $L_{e'}$ is decorated by a red loop (as $e' \in \loopEdge$), but $L_{e}$ is not. 
In $\fH_{\de}(M, \Gamma, Q, \spnTree')$, we have the opposite: $L_{e}$ is decorated by a red loop, but $L_{e'}$ is not. 
Hence, in order to show that $\fH_{\de}(M, \Gamma, Q, \spnTree')$ can be obtained from $\fH_{\de}(M, \Gamma, Q, \spnTree)$ by sliding moves, we only have to show that $L_{e'}$ can slide along the other red loops of $\fH_{\de}(M, \Gamma, Q, \spnTree)$ on $\Sigma$ to result in $L_e$. 

The following observation enables us to further simplify the question: Sliding a strand of an admissible bichrome graph in a surface over a red loop is effectively the same as isotoping the strand over one side of the boundary of the 2-handle attached to the surface along the red loop. 
Let $\fA$ be the 3-manifold with boundary obtained as follows: Attach a 2-handle to $\fH_{\de}(M, \Gamma, Q, \spnTree)$ along each of its red loops except for the red loop at $L_{e'}$, see below: 
\begin{equation}
\includegraphics[valign=c,scale=0.5]{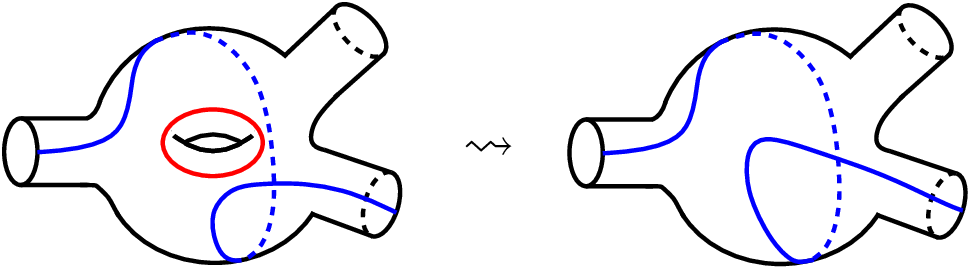}
\end{equation}

Then to finish the proof of the lemma, it suffices to show that $L_{e'}$ can be isotoped to $L_{e}$ in $\fA$. In view of Remark \ref{rmk:our-graph}, $\fA$ is obtained from (the closure of) the complement of $\spnTree\setminus\{v_0\}$ in $M$ by removing the 2-handle attached along $L_{e'}$.

Let $f(v, v_0)$ be the edge at $v$ that is contained in the unique path in $\spnTree$ connecting $v$ and $v_0$ (if $v=v_0$, we set $f(v, v_0)$ to be empty). We have the following two cases: 

\smallskip

\noindent
\textbf{Case 1.}
$f(v, v_0) = e$. In this case, denote the connected component of $\spnTree\setminus\{e\}$ containing $v$ by $B$, then $B$ as a subtree of $\spnTree$ is not connected to $v_0$, and the boundary the tubular neighbourhood of $B$ in $\fA$ (see Remark \ref{rmk:our-graph}) is homeomorphic to an annulus $\bS^1 \times [0, 1]$ in $\fA$ having $L_{e}$ and $L_{e'}$ as its two boundary circles. Hence, $L_{e'}$ can be isotoped to $L_{e}$ in $\fA$.
\begin{equation}
\includegraphics[valign=c,scale=0.5]{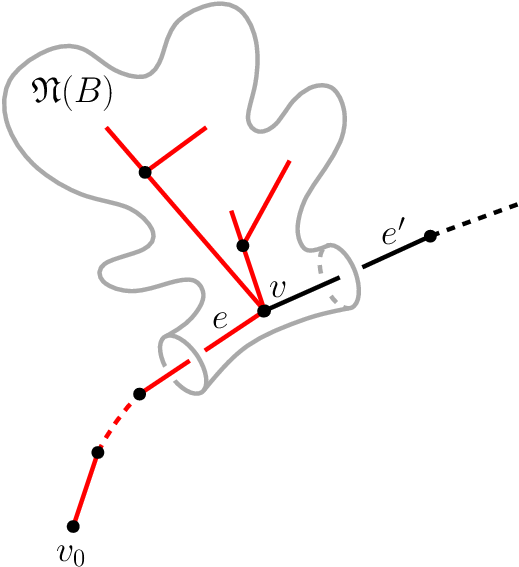}
\end{equation}

\smallskip

\noindent
\textbf{Case 2.}
$f(v, v_0) \neq e$. Let $E(v, \spnTree)$ be the set of edges of $\spnTree$ that has $v$ as a vertex. By construction, $e$, $e'$ and possibly some other edges in $\spnTree$ form a cycle $C$ in $\ourGraph_{\de}(M)$, and the connected component $B$ of $\spnTree\setminus(E(v, \spnTree)\setminus \{e\})$ containing $v$ has $C \setminus \{e'\}$ as a subtree. By assumption, $B$ is not connected to $v_0$, and by the same argument as above, the boundary of the tubular neighbourhood of $B$ in $\fA$ is isomorphic to an annulus $\bS^1 \times [0, 1]$ in $\fA$ having $L_{e}$ and $L_{e'}$ as its two boundary circles.
\begin{equation}
\includegraphics[valign=c,scale=0.5]{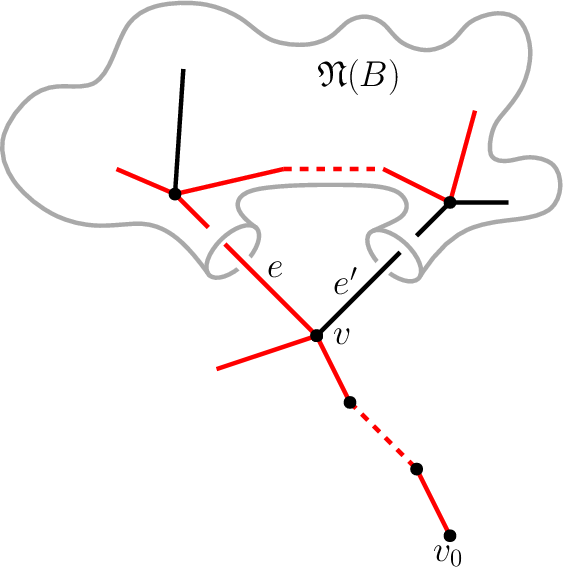}
\end{equation}
Hence, $L_{e'}$ can be isotoped to $L_{e}$ in $\fA$, and this completes the proof. 
\end{proof}

Now fix $\de$ and $\spnTree$. Let $Q$ and $Q'$ be two sets of interior points in 2-cells in $\bd M$ that avoid $\Gamma$, then $\fH_{\de}(M, \Gamma, Q, \spnTree)$ and $\fH_{\de}(M, \Gamma, Q', \spnTree)$ have the same underlying handlebody $\nb(M^{(1)}_{\de})$. By construction, since $\fH_{\de}(M, \Gamma, Q, \spnTree)$ and $\fH_{\de}(M, \Gamma, Q', \spnTree)$ are associated to the spanning tree $\spnTree$ of $\ourGraph_{\de}(M)$, so the bichrome graphs $\Gamma_{Q}(\spnTree)$ and $\Gamma_{Q'}(\spnTree)$ are identical except for the blue part on the boundary of the solid torus $\nb(\bd \twoCell{W})$ in $\nb(M^{(1)}_{\de})$. In particular, they have identical red parts.

\begin{lemma}\label{lem:Q-indep}
The bichrome graphs $\Gamma_Q(\spnTree)$ and $\Gamma_{Q'}(\spnTree)$ are related by a sequence of sliding moves along the red loops on $\Sigma := \bd\nb(M^{(1)}_{\de})$.
\end{lemma}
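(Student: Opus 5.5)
The plan is to reduce to changing one point at a time and then to analyse a single boundary 2-cell. Since $Q$ and $Q'$ each contain exactly one point per boundary 2-cell, we will move the points one at a time. So it suffices to treat the case where $Q$ and $Q'$ differ only in the point of a single boundary 2-cell $\twoCell{W} \subset \bd M$, say $q \in Q$ and $q' \in Q'$. Inside $\Int(\twoCell{W})$, choose a path from $q$ to $q'$ in general position with respect to $\Gamma$. Subdividing this path, we may further assume that it crosses the edges of $\Gamma \cap \twoCell{W}$ at most once. If it crosses no edge, the enlarging-and-pushing procedure \eqref{eq:push-red} gives isotopic blue graphs on $\Sigma$ (the isotopy takes place in $\twoCell{W}$ and is supported away from the red loops), and we are done. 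So the essential case is a single crossing: $q$ and $q'$ lie on the two sides of one edge $x$ of $\Gamma$ labelled by some $X \in \cC$, near a point of $x$ that is interior to $\twoCell{W}$.

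In this case we compare the two pushed graphs on the solid torus $\nb(\bd \twoCell{W})$. When $U_q$ is enlarged, the edge $x$ gets pushed into $\nb(\bd\twoCell{W})$ running around one side of $\bd U_q$; when $U_{q'}$ is enlarged, it runs around the other side. After pushing, the arc of $x$ near the belt $\bd U_q$ lies parallel to the belt curve on one side in the first picture and on the other side in the second picture. Concretely, the two graphs differ by moving that arc of $x$ across the belt $\belt_Q(\twoCell{W})$, i.e.\ once around the meridian-like curve on $\bd\nb(\bd\twoCell{W})$. We now distinguish two cases according to whether $\twoCell{W}$ lies in $\loopEdge$ or in $\spnTree$.

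\textbf{Case $\twoCell{W} \in \loopEdge$.} Here the belt carries a red loop, and moving $x$ across it is exactly a sliding move along that red loop (Corollary~\ref{cor:slide-X}). This is legitimate because, as noted before Lemma~\ref{lem:T-indep}, the complement of each individual belt in $\Sigma$ is connected.

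\textbf{Case $\twoCell{W} \in \spnTree$ (main obstacle).} Here there is no red loop on this belt, so we instead slide $x$ across the \emph{other} red loops. As in the proof of Lemma~\ref{lem:T-indep}, we pass to the 3-manifold $\fA$ obtained by attaching 2-handles along all red loops. Sliding moves in $\Sigma$ then correspond to isotopies across the attached 2-handles, and it suffices to show that the two positions of the arc of $x$ are isotopic in $\bd\fA$ through the handles. By Remark~\ref{rmk:our-graph}, $\fA$ is the complement of a neighbourhood of $\spnTree\setminus\{v_0\}$ in $M$. The edge $\twoCell{W}$ joins $v_0$ to a 3-cell vertex, and the part of $\spnTree$ cut off from $v_0$ by removing $\twoCell{W}$ is a subtree $B$. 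The boundary of the tubular neighbourhood of $B$, together with the boundary disc on the $\twoCell{W}$ side, is a disc-like region: it is a sphere punctured only at the belt of $\twoCell{W}$. Consequently, the belt $\bd U_q$ bounds a disc in $\bd\fA$ on the far side. Pushing the arc of $x$ over this disc moves it from one side of the belt to the other, which is the desired isotopy. Translating this back into sliding moves over the red loops of the 2-cells adjacent to $B$ completes the argument. I expect the difficulty to lie in verifying carefully that this far-side region is indeed a disc, using that $\spnTree$ is a tree so that $B$ has no cycles; this is the same kind of argument as Case 1 of Lemma~\ref{lem:T-indep}.
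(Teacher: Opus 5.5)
Your proposal is correct and is essentially the paper's proof. It uses the same reduction to a single boundary 2-cell $\twoCell{W}$ and the same case split on whether $\twoCell{W}$ lies in $\spnTree$: a direct slide over the red belt loop in the first case, and in the second case the observation that, after capping all red loops to obtain $\fA$, the belt bounds a disc (the boundary of a neighbourhood of the subtree cut off from $v_0$ by $\twoCell{W}$) across which the blue strands can be isotoped. Your additional reduction to crossing a single blue edge only makes explicit the local picture the paper leaves implicit; one small terminological slip is that the belt is a longitude of the solid torus $\nb(\bd\twoCell{W})$, not a meridian.
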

\begin{proof}
It suffices to prove the statement for the case when $Q$ and $Q'$ are only different in the choice of one point in one 2-cell. 
More precisely, in this situation, $Q\setminus Q' = \{q\}$ and $Q' \setminus Q = \{q'\}$ for two points $q$ and $q'$ contained in the interior of a boundary
2-cell $\twoCell{W} \subset \bd M$. 
Let $e$ be the edge of $\ourGraph_{\de}(M)$ corresponding to $\twoCell{W}$, then $e$ connects to $v_0$.
    
Now we have the following 2 cases.

\smallskip

\noindent
\textbf{Case 1.}
$e \notin \spnTree$. In this case, there is a red loop $C$ along the belt of $\twoCell{W}$ corresponding to $Q$ (resp.\ $Q'$) in $\Gamma_{Q}(\spnTree)$ (resp.~$\Gamma_{Q'}(\spnTree)$). By Corollaries \ref{cor:slide-X} and \ref{cor:red-over-red}, $\Gamma_{Q}(\spnTree)$ can be slided to $\Gamma_{Q'}(\spnTree)$ along the red loop $C$: 
\begin{equation}
\includegraphics[valign=c,scale=0.5]{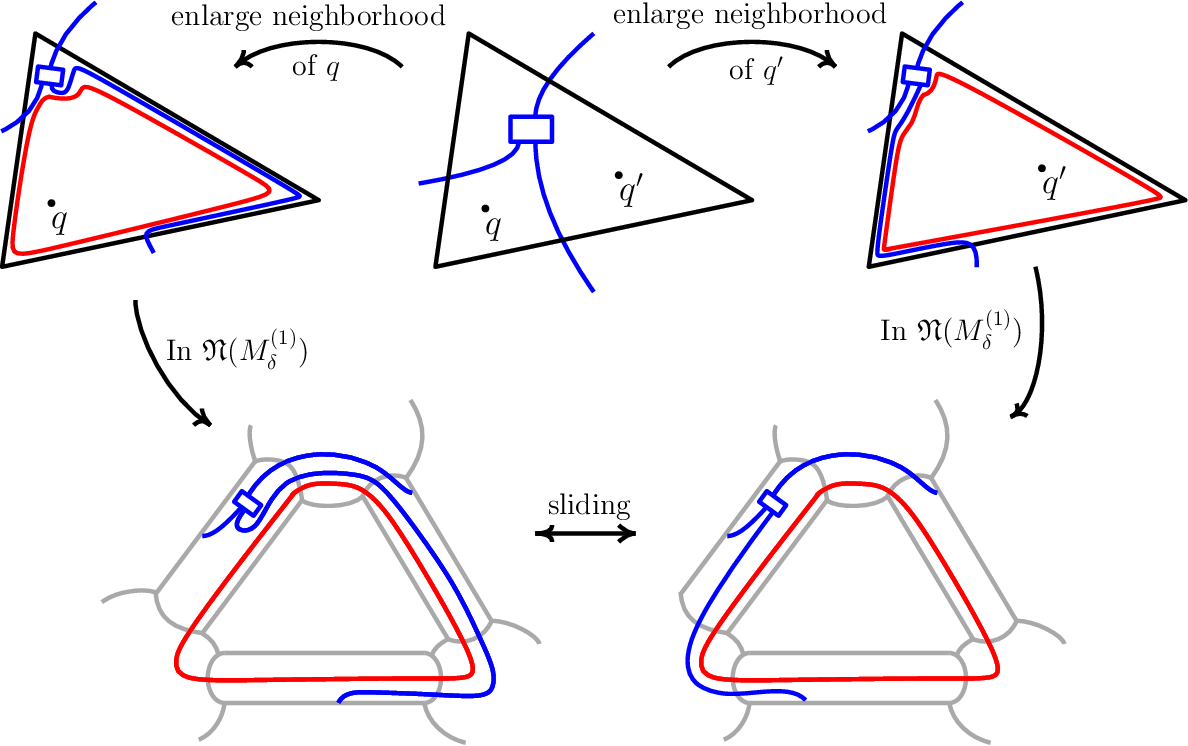}
\end{equation}

\noindent
\textbf{Case 2.}
$e \in \spnTree$. 
By the property of spanning trees, $\spnTree \setminus \{e\}$ is disconnected (see above Remark~\ref{rmk:cycle-switch}).
Consider the connected component of $\spnTree \setminus \{e\}$ that does not contain $v_0$, 
and take the union of this connected component with the half-edge of $e$ to which it connects. Denote the result by $B$. 

For each red loop of $\Gamma_{Q}(\spnTree)$ (or $\Gamma_{Q'}(\spnTree)$, recall that they have the same red parts), attach a 2-handle to $\nb(M^{(1)}_{\de})$, and denote the resulting 3-manifold by $\fA$. 
Then the boundary of the complement of $B$ in $M$ is homeomorphic to a disk $\bD^2$ in $\fA$ whose boundary circle is the longitude of the solid torus $\nb(\bd \twoCell{W})$. 
\begin{equation}
\includegraphics[valign=c,scale=0.5]{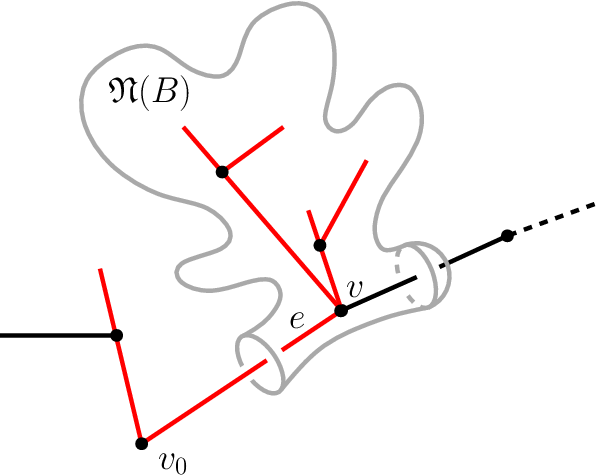}
\end{equation}

It is then easy to see that the blue part of $\Gamma_{Q}(\spnTree)$ that is contained in the boundary of $\nb(\bd \twoCell{W})$ can be isotoped in $\fA$ through the complement of $B$ to the corresponding blue part of $\Gamma_{Q'}(\spnTree)$. This is illustrated as follows:
\begin{equation}
\includegraphics[valign=c,scale=0.5]{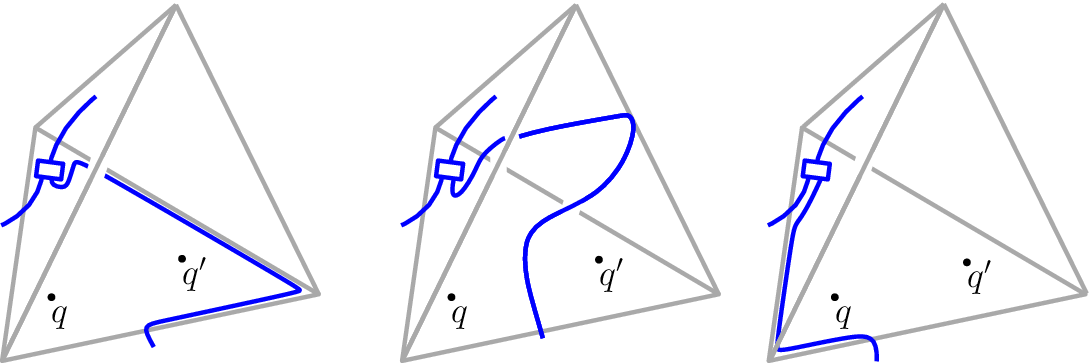}
\end{equation}

As is noted in the proof of Lemma \ref{lem:T-indep}, this implies that $\Gamma_Q(\spnTree)$ and $\Gamma_{Q'}(\spnTree)$ are related by a sequence of sliding moves along the red loops, and this completes the proof. 
\end{proof}

\begin{cor}\label{cor:T-Q-indep}
The value $\cgpt^\mathrm{bi}(\fH_{\de}(M, \Gamma, Q, \spnTree))$ does not depend on the choice of the spanning tree $\spnTree$ of $\ourGraph_{\de}(M)$ and the set of interior points $Q$ of 2-cells.  \qed
\end{cor}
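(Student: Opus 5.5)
The corollary will follow by combining Lemmas~\ref{lem:T-indep} and~\ref{lem:Q-indep} with the fact that sliding moves do not change the class of a bichrome graph in $\sk^{\mathrm{bi}}_{\adm}(\Sigma)$, where $\Sigma = \bd\nb(M^{(1)}_{\de})$. The plan is to show that for any two choices $(Q,\spnTree)$ and $(Q',\spnTree')$ (with $\de$ fixed), the two handlebody bichrome graphs share the underlying handlebody $\nb(M^{(1)}_{\de})$, and their bichrome graphs have equal classes in $\sk^{\mathrm{bi}}_{\adm}(\Sigma)$. Since $\cgpt^\mathrm{bi}(\nb(M^{(1)}_{\de}),-)$ is by definition a linear map on $\sk^{\mathrm{bi}}_{\adm}(\Sigma)$ (namely $\cgpt(\nb(M^{(1)}_{\de}),-)\circ\mathrm{RB}$), the values then coincide.

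Concretely, I would proceed in two steps via the intermediate choice $(Q,\spnTree')$. First, fix $Q$ and compare $\spnTree$ with $\spnTree'$. By Lemma~\ref{lem:T-indep}, $\Gamma_Q(\spnTree')$ is obtained from $\Gamma_Q(\spnTree)$ by a finite sequence of sliding moves along red loops on $\Sigma$. Second, fix $\spnTree'$ and compare $Q$ with $Q'$. By Lemma~\ref{lem:Q-indep}, $\Gamma_{Q}(\spnTree')$ and $\Gamma_{Q'}(\spnTree')$ are likewise related by sliding moves.

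Each sliding move is, by Definition~\ref{def:slide}, one of the modifications in Corollaries~\ref{cor:slide-X} and~\ref{cor:red-over-red}. These corollaries assert equality of classes in $\sk^{\mathrm{bi}}_{\adm}(\Sigma)$, provided the complement in $\Sigma$ of the red loop being slid over is connected. I will invoke the observation recorded at the start of this subsection: the complement of each individual attaching belt in $\nb(M^{(1)}_{\de})$ is connected. This hypothesis therefore holds for every red loop occurring in the intermediate graphs. Some intermediate graphs have red loops isotoped away from belt positions; these are isotopic on $\Sigma$ to belts, so connectedness of the complement is preserved.

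The only step needing care is exactly this verification that every intermediate red curve satisfies the connectedness hypothesis. This is where I would spend the checking effort, arguing via isotopy invariance of the complement's connectivity. Admissibility is automatic, since the blue part is unchanged up to isotopy and $\Sigma$ is connected. Chaining the equalities then gives $\cgpt^\mathrm{bi}(\fH_{\de}(M,\Gamma,Q,\spnTree)) = \cgpt^\mathrm{bi}(\fH_{\de}(M,\Gamma,Q',\spnTree'))$.
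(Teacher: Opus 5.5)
Your argument is correct and is the paper's own: the corollary is stated there as an immediate consequence of Lemmas~\ref{lem:T-indep} and~\ref{lem:Q-indep}, using that sliding moves preserve classes in $\sk^{\mathrm{bi}}_{\adm}(\Sigma)$ and hence values of $\cgpt^{\mathrm{bi}}$. One small correction: the connectedness hypothesis in Corollaries~\ref{cor:slide-X} and~\ref{cor:red-over-red} concerns only the red curve $\omega$ being slid over, which in both lemmas is always an unmodified belt loop, so the paper's one-time observation applies directly; your claim that the intermediate (band-summed) red curves are isotopic on $\Sigma$ to belts is false in general, but it is also not needed.
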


\begin{lemma}\label{lem:de-indep}
Let $\de$ and $\de'$ be two generic PLCW decompositions of $(M, \Gamma)$. Then for any choice of spanning trees of $\ourGraph_{\de}(M)$ (resp.~$\ourGraph_{\de'}(M)$) and any choice of interior points $Q$ (resp.~$Q'$) of boundary 2-cells of $\de$ (resp.~$\de'$), we have \begin{equation}\label{eq:de-inv}
\cgpt^\mathrm{bi}(\fH_{\de}(M, \Gamma, Q, \spnTree)) = \cgpt^\mathrm{bi}(\fH_{\de'}(M, \Gamma, Q', \spnTree'))\,.
\end{equation}
\end{lemma}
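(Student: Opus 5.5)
By Corollary~\ref{cor:T-Q-indep}, for a fixed generic decomposition the value $\cgpt^\mathrm{bi}(\fH_{\de}(M,\Gamma,Q,\spnTree))$ does not depend on $\spnTree$ and $Q$. We are therefore free to choose them conveniently at every step. By \cite[Thm.\,8]{Kir12}, $\de$ and $\de'$ are related by a finite sequence of elementary subdivisions and their inverses. There is a subtlety: the intermediate decompositions must be generic with respect to $\Gamma$ and must restrict to PLCW decompositions of $\bd M$. I would handle it in two steps. First, use a small isotopy of $\Gamma$ within $\bd M$; this is harmless by Corollary~\ref{cor:F-H-bi-properties}\,(2) together with the fact that pushing $\Gamma$ into $\nb(M^{(1)}_\de)$ is then changed only by an isotopy on $\Sigma$. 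Second, apply the relative version of Kirillov's theorem, i.e.\ subdivisions compatible with the boundary subcomplex. With this, it suffices to treat a single elementary subdivision $\de \to \de'$ of one $n$-cell $A$, for $n=1,2,3$, where $A$ is either interior or lies in $\bd M$.

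\textbf{Subdividing a 3-cell} $A^{(3)}$ into $A_\pm$ separated by a new 2-cell $A_0$. The 1-skeleton gains 1-cells only if the equator subdivides faces. I would first reduce to the case where the equator already lies in $M^{(2)}_\de$ by performing the needed lower-dimensional subdivisions first. In that case $\nb(M^{(1)})$ is unchanged up to new 1-cells inside $\bd A$, which are handled below. The $(2,3)$-graph gains one vertex (splitting $A$ into $A_\pm$) and one edge $A_0$. Take $\spnTree' = \spnTree \cup \{A_0\}$, with the edges at $A$ redistributed between $A_\pm$. Then the set $\loopEdge$ is unchanged, so the bichrome graphs coincide.

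\textbf{Subdividing a 2-cell} $W$ by a new 1-cell $c$ into $W_\pm$. The handlebody $\nb(M^{(1)}_{\de'})$ is $\nb(M^{(1)}_\de)$ with one extra 1-handle around $c$, and the graph gains one parallel edge. Choose $\spnTree'$ to contain exactly one of $W_\pm$ if $W \in \spnTree$, and neither otherwise.
\begin{itemize}
\item If $W\in\spnTree$, the new configuration has one new red loop, the belt of (say) $W_-$. It crosses the cocore of the new 1-handle exactly once, so the red capping move (Lemma~\ref{lem:hb-cap}) removes the handle together with the loop and gives back $\fH_\de$.
\item If $W\notin\spnTree$, there are two red loops, the belts of $W_\pm$. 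Sliding one over the other (Corollary~\ref{cor:red-over-red}) turns it into the old belt of $W$. Capping the remaining loop, which meets the new handle once, then returns $\fH_\de$.
\end{itemize}
For a boundary 2-cell, choose the new point of $Q'$ away from $c$ and argue the same way, using the edges to $v_0$.

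\textbf{Subdividing a 1-cell} by a new 0-cell. This changes $\nb(M^{(1)})$ only by an isotopy, and the red belts are unchanged, so nothing needs to be done.

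Inverse moves follow by symmetry, which gives \eqref{eq:de-inv}. I expect the main obstacle to be the careful verification in the 2-cell case. One has to check that after the slide the remaining loop meets a cutting disc of the new 1-handle exactly once and is disjoint from the blue graph there. One also has to check that the sliding moves meet the connectedness hypothesis of Corollary~\ref{cor:red-over-red}; this holds for belts, as noted before Lemma~\ref{lem:T-indep}. Finally, in the boundary case the graph $\Gamma$ pushed along $\bd U_q$ must still be positioned compatibly. I would handle this last point by choosing $q$ in $W_+$, so that by Lemma~\ref{lem:Q-indep} the blue part avoids the new 1-handle.
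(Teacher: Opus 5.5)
Your proposal is correct and follows essentially the paper's route: reduce via \cite[Thm.\,8]{Kir12} to a single elementary subdivision, then treat each case. Subdividing a 1-cell only fattens a 1-handle; subdividing a 3-cell leaves the red loops unchanged once the new edge is put into the spanning tree; subdividing a 2-cell is undone by one red capping move. The differences are minor. The paper always takes $\spnTree'$ to omit the new edge and caps directly, which also covers your $W\notin\spnTree$ subcase without the preliminary slide. The equator of an elementary 3-cell subdivision already lies in the 1-skeleton by definition, so your preliminary reduction there is vacuous. Finally, your perturbation of $\Gamma$ to keep the intermediate decompositions generic addresses a point the paper leaves implicit.
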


\begin{proof}
According to \cite[Thm.\,8]{Kir12}, $\de$ and $\de'$ are related by a finite sequences of elementary subdivisions and their inverses. 
Therefore, it suffices to prove \eqref{eq:de-inv} when $\de'$ is obtained from $\de$ by an elementary subdivision of an $n$-cell in $\de$ for $n = 1, 2, 3$. 
Our proof of the lemma is hence divided into 3 cases, where in each one of them, we will make some assumptions on $\spnTree$, $\spnTree'$, $Q$ and $Q'$. 
Thanks to Corollary \ref{cor:T-Q-indep}, such assumptions can be made without loss of generality, and for simplicity, we will not refer to Corollary \ref{cor:T-Q-indep} every time we use it.

\smallskip

\noindent
\textbf{Case 1.} 
When $n=1$, $\de'$ is obtained from $\de$ by adding a 0-cell to a 1-cell $\oneCell{A}$. In this case, $\de$ and $\de'$ have identical sets of 2-cells and 3-cells, and in particular, $\ourGraph_{\de}(M) = \ourGraph_{\de'}(M)$. Hence, we can assume that $\spnTree = \spnTree'$ and $Q=Q'$. 
Moreover, it is easy to see that the subdivision of $\oneCell{A}$ only changes $\nb(M_\delta^{(1)})$ by fattening the 1-handle corresponding to $\oneCell{A}$, and change the local shape of $\Gamma$ on it accordingly:
\begin{equation}\label{eq:1-sub}
\includegraphics[valign=c,scale=0.7]{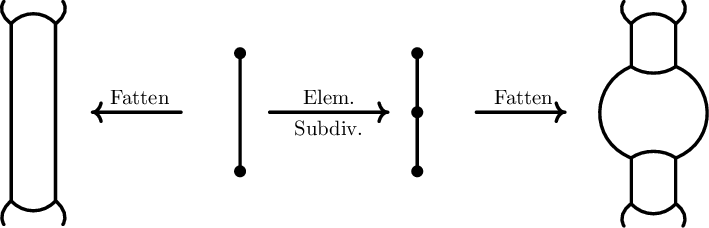}    
\end{equation}
Therefore, $\fH_{\de}(M, \Gamma, Q, \spnTree)$ is diffeomorphic to $\fH_{\de'}(M, \Gamma, Q', \spnTree')$, which implies that \eqref{eq:de-inv} holds.

\smallskip

\noindent
\textbf{Case 2.}
When $n = 2$, $\de'$ is obtained from $\de$ by adding a 1-cell $\oneCell{W}$ inside a 2-cell $\twoCell{A}$ such that $\oneCell{W}$ connects two 0-cells $\zeroCell{V}_{1}$ and $\zeroCell{V}_{2}$ in $\bd\twoCell{A}$, and $\oneCell{W}$ separates $\twoCell{A}$ into two 2-cells $\twoCell{A}_{1}$ and $\twoCell{A}_{2}$. A local picture is given as follows:
\begin{equation}
\includegraphics[valign=c,scale=0.8]{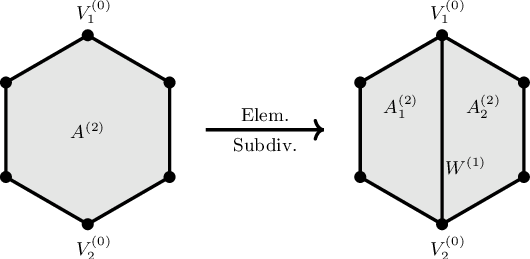}
\end{equation}

Now we compare the graph $\ourGraph_{\de}(M)$ with $\ourGraph_{\de'}(M)$. Let $e$ be the edge in $\ourGraph_{\de}(M)$ corresponding to $\twoCell{A}$, and let $v$, $v'$ be the endpoints of $e$. 
We allow $v$ and $v'$ to be equal when $e$ is a loop. 
Then by construction, $\ourGraph_{\de'}(M)$ can be obtained from $\ourGraph_{\de}(M)$ by adding an edge $e'$ that connects $v$ and $v'$:
\begin{equation}
\includegraphics[valign=c,scale=1]{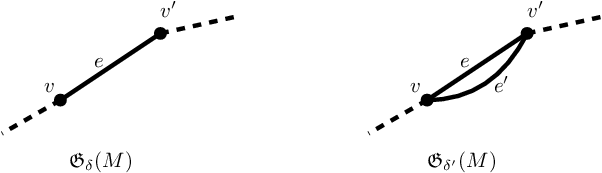}
\end{equation}
We take $e \in \ourGraph_{\de'}(M)$ to corresponds to the 2-cell $\twoCell{A}_{1}$ in $\de'$, and $e'$ to $\twoCell{A}_{2}$. 

By Corollary~\ref{cor:T-Q-indep}, the choice of spanning tree $\fT'$ for $\ourGraph_{\de'}(M)$ does not affect the value of $\cgpt^\mathrm{bi}$.
Since $e$ and $e'$ form a loop in $\ourGraph_{\de'}(M)$, we can assume that $\spnTree'$ does not contain $e'$. 
In addition, as $\ourGraph_{\de}(M)$ and $\ourGraph_{\de'}(M)$ have the same set of vertices and are only different in the edge $e'$, it is easy to see that $\spnTree'$ is also a spanning tree of $\ourGraph_{\de}(M)$. So we can assume that $\spnTree=\spnTree'$.

Again by Corollary~\ref{cor:T-Q-indep}, the choice of interior points $Q'$ in boundary 2-cells of $\de'_2$ does not affect the value of $\cgpt^\mathrm{bi}$.
Since the only difference between the 2-cells of $\de$ and those of $\de'$ are $\twoCell{A}$, $\twoCell{A}_{1}$ and $\twoCell{A}_{2}$, we can pick the same interior points for all the boundary 2-cells in $\de_2 \setminus \{\twoCell{A}\}=\de_2'\setminus \{\twoCell{A}_{1}, \twoCell{A}_{2}\}$.
In addition, since $\twoCell{A}$ is homeomorphic to $\twoCell{A}_{1} \cup \oneCell{W} \cup \twoCell{A}_{2}$, we can pick an interior point $x_{1} \in \twoCell{A}_{1}$, and assume that it is also the interior point we choose for $\twoCell{A}$, where we view $\twoCell{A}_{1}$ as a subset of $\twoCell{A}$. 
This implies the only difference in the choice of interior points of 2-cells in $\de$ and $\de'$ lies in the choice of the interior point $x_2$ in $\twoCell{A}_{2}$, i.e., $Q' = Q \cup \{x_2\}$.

By the above discussions, we can see that there is a red loop $\gamma$ in $\fH_{\de'}(M, \Gamma, Q', \spnTree')$ parallel to $\bd\twoCell{A}_{2}$:
\begin{equation}\label{eq:2-sub}
\includegraphics[valign=c,scale=0.5]{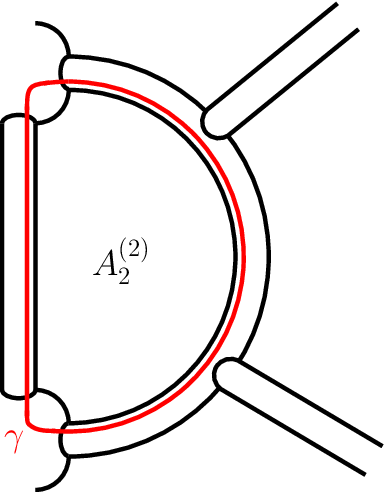}
\end{equation}
Thus, $\fH_{\de}(M, \Gamma, Q, \spnTree)$ can be obtained from $\fH_{\de'}(M, \Gamma, Q', \spnTree') = (\nb(M_{\de'}^{(1)}), \Gamma_{Q'}(\spnTree'))$ by a red capping move along $\gamma$. In other words, we have 
\[
\fH_{\de}(M, \Gamma, Q, \spnTree) 
= 
\Cap_{\gamma}(\fH_{\de'}(M, \Gamma, Q', \spnTree'))\,,
\]
which implies that \eqref{eq:de-inv} holds in this case due to Lemma \ref{lem:hb-cap}. 

Note that even if $\oneCell{W} \cap \Gamma \neq \emptyset$ (then $\twoCell{A}$ is necessarily contained in $\bd M$), the above argument is still valid, because in the tubular neighbourhood of $\oneCell{W}$ in $\nb(M_{\delta'}^{(1)})$, we can pull all other parts of the graph away from $\gamma$ so we can still perform the red capping move. 

\smallskip

\noindent
\textbf{Case 3.}
When $n = 3$, $\de'$ is obtained from $\de$ by adding a 2-cell $\twoCell{W}$ inside a 3-cell $\threeCell{A}$ such that the equator $\bd\twoCell{W}$ is a union of 1-cells in $\bd\threeCell{A}$, and $\twoCell{W}$ separates $\threeCell{A}$ into two 2-cells $\threeCell{A}_{1}$ and $\threeCell{A}_{2}$. 
In particular, $\de$ and $\de'$ have the same 1-skeleton, so we have $\nb := \nb(M_{\de}^{(1)}) = \nb(M_{\de'}^{(1)})$. Moreover, $\de_2 = \de'_2 \setminus \{\twoCell{W}\}$.

Next, we compare $\Gamma_{Q}(\spnTree)$ with $\Gamma_{Q'}(\spnTree')$. By construction, $\Gamma$ (after being pushed to $\bd\nb$) is a subgraph of $\Gamma_{Q}(\spnTree)$ and $\Gamma_{Q'}(\spnTree')$. Moreover, since $\de_2 = \de'_2 \setminus \{\twoCell{W}\}$, and $\intr(\twoCell{W})\subset \intr(M)$, the set of 2-cells in $\bd M$ in $\de$ and $\de'$ are the same, so by Corollary \ref{cor:T-Q-indep}, we can assume without loss of generality that $Q = Q'$. Consequently, the images of $\Gamma$ as subgraphs of $\Gamma_{Q}(\spnTree)$ and $\Gamma_{Q'}(\spnTree')$ respectively are identical. 

Then we investigate the red loops in $\Gamma_{Q}(\spnTree)$ and $\Gamma_{Q'}(\spnTree')$ that are added to $\bd\nb$ according to the choice of $\spnTree$ (resp.~$\spnTree'$).
Let $v_1$ be the vertex in $\ourGraph_{\de}(M)$ corresponding to $\threeCell{A}$, $v'_{1}$ (resp.~$v'_{2}$) the vertex in $\ourGraph_{\de'}(M)$ corresponding to $\threeCell{A}_{1}$ (resp.~$\threeCell{A}_{2}$), and $e'$ the edge in $\ourGraph_{\de'}(M)$ corresponding to $\twoCell{W}$. 
By construction, $\ourGraph_{\de'}(M)$ can be obtained from $\ourGraph_{\de}(M)$ by splitting the vertex $v_1$ into two distinct vertices $v'_1$ and $v'_2$ connected by one edge $e'$:
\begin{equation}
\begin{tikzcd}
\includegraphics[valign=c,scale=0.7]{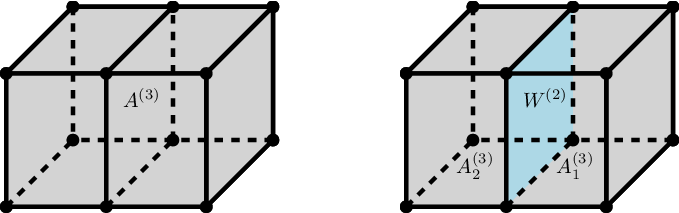}  \\
\includegraphics[valign=c,scale=0.7]{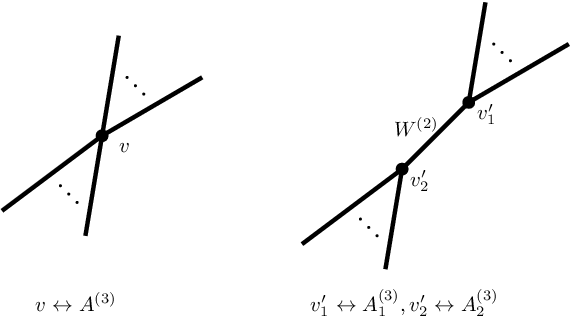}   
\end{tikzcd}
\end{equation}
In other words, $\ourGraph_{\de'}(M)$ has one vertex and one more edge than $\ourGraph_{\de}(M)$, and shrinking $\ourGraph_{\de'}(M)$ along $e'$ (so that $v'_{1}$ and $v'_{2}$ becomes a single vertex) will produce $\ourGraph_{\de}(M)$. 

Since $v'_1 \ne v'_2$, there must be a spanning tree $\spnTree'$ of $\ourGraph_{\de'}(M)$ containing $e'$:
Start with any spanning tree of $\ourGraph_{\de'}(M)$, if it does not contain $e'$, then we can carry out an edge exchange as in Remark~\ref{rmk:cycle-switch}\,(2)
to obtain a new spanning tree that does contain $e'$.
By shrinking $\spnTree'$ along $e'$, we get a spanning tree $\spnTree$ of $\ourGraph_{\de}(M)$. Indeed, shrinking along $e'$ removes one vertex and one edge, and so the condition in Remark~\ref{rmk:cycle-switch}\,(1) is still satisfied.
Thus, there is a 1-to-1 correspondence between the set of edges in $\ourGraph_{\de}(M) \setminus \spnTree$ and those in $\ourGraph_{\de'}(M) \setminus \spnTree'$. 
These sets of edges in turn correspond to the same subset of 2-cells in $\de_2 = \de'_2\setminus\{\twoCell{W}\}$, to whose boundary we add red loops to obtain $\Gamma_{Q}(\spnTree)$ and $\Gamma_{Q'}(\spnTree')$. 

Combining the above discussions, we conclude that $\Gamma_{Q}(\spnTree)$ and $\Gamma_{Q'}(\spnTree')$ have the same $\Gamma$ component and the same red loops corresponding to 2-cells, i.e., $\Gamma_{Q}(\spnTree) = \Gamma_{Q'}(\spnTree')$. 
Therefore, we have 
\[\fH_{\de}(M, \Gamma, Q, \spnTree)) = (\nb(M_{\de}^{(1)}), \Gamma_{Q}(\spnTree)) = (\nb(M_{\de}^{(1)}), \Gamma_{Q'}(\spnTree'))  = \fH_{\de'}(M, \Gamma, Q', \spnTree')\,,\]
which immediately implies \eqref{eq:de-inv}. 
This completes the proof.
\end{proof}

\begin{proof}[Proof of Theorem \ref{thm:tau-well-def}]
The statement follows directly from Lemma \ref{lem:de-indep} and Corollary \ref{cor:F-H-bi-properties}.
\end{proof}

\subsection{The cutting move}\label{subsec:cut}
The cutting move of  bulk-admissible $\cC$-graphs in general 3-manifolds is defined in the same way as that of bichrome graphs in multi-handlebodies in Section~\ref{sec:handlebody-inv}. 
More precisely, consider $(M, \Gamma)$ and let $D \subset M$ be a cutting disc, that is, an oriented properly embedded disk whose boundary $\bd D\subset \bd M$ does not meet the coupons of $\Gamma$ and intersects the strands of $\Gamma$ transversely in a non-empty set which contains at least one strand coloured by an object in $\Proj_{\cC}$. 
Denote by $\cut_D(M)$ the 3-manifold obtained by cutting $M$ along $D$ and denote by $\cut_D(\Gamma)$ the bichrome graph in $\cut_D(M)$ obtained by joining the cut points of $\Gamma$ to two new coupons in $\bd\cut_D(M)$, and colour the pair of coupons by the dual basis \eqref{eq:Lam-t-P-def}. Locally, we have 
\begin{equation}\label{eq:tau-cut-0}
\begin{tikzcd}[column sep = 5em]
\includegraphics[valign=c, scale=0.6]{pic/cut-pic-1.eps}
\ar[r, "\mathrm{Cutting}"]&
\sum\limits_{i}\ \includegraphics[valign=c, scale=0.6]{pic/cut-pic-2.eps}
\end{tikzcd}
\end{equation}

In this case, we say that $(\cut_D(M), \cut_D(\Gamma))$ is obtained from \((M, \Gamma)\) via the cutting move (along $D$). Note that by construction, if $\Gamma$ is bulk-admissible in $M$, then \(\cut_D(\Gamma)\) is bulk-admissible in \(\cut_D(M)\). 
 
\begin{prop}\label{prop:cut}
Let $M$ be a compact oriented 3-manifold and $\Gamma$ a bulk-admissible $\cC$-graph $\Gamma$ in $M$. For any cutting disk $D$ of $(M, \Gamma)$, we have 
\[\tau_\cC(M, \Gamma) = \tau_\cC(\cut_D(M), \cut_D(\Gamma))\,.\]
\end{prop}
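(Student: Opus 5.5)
The plan is to reduce the statement to the cutting-move invariance of $\cgpt^\mathrm{bi}$ for multi-handlebodies (Corollary~\ref{cor:F-H-bi-properties}\,(4)). This needs a PLCW decomposition of $M$ adapted to $D$. Since $\tau_\cC$ is independent of all choices (Theorem~\ref{thm:tau-well-def}), we may pick these choices freely. We may also assume $M$ is connected, by multiplicativity.

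\textbf{Step 1: an adapted decomposition.} Choose a generic PLCW decomposition $\de$ of $(M,\Gamma)$ in which $D$ is a subcomplex: a single 2-cell $D$ whose boundary circle $\bd D\subset\bd M$ is a union of 1-cells (one or two 1-cells plus 0-cells) meeting $\Gamma$ transversally. Cutting along $D$ then yields a PLCW decomposition $\de_\cut$ of $\cut_D(M)$. In it, $D$ is doubled to two boundary 2-cells $D_\pm$, and the 0- and 1-cells of $\bd D$ are doubled. The 1-skeleton of $\de_\cut$ is the 1-skeleton of $\de$ cut open along $\bd D$. Consequently $\nb(\oneCell{(\cut_D M)}_{\de_\cut})$ is obtained from $\nb(\oneCell{M}_\de)$ by cutting along a properly embedded disc $D'$, namely the transverse cross-section of the 1-handles running along $\bd D$, pushed slightly. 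I expect this identification to need some care, because $\bd D$ consists of several 1-cells. I would first subdivide (Case~1 of Lemma~\ref{lem:de-indep}) so that $\bd D$ is a single 1-cell loop with one 0-cell, and then handle the neighbourhood of that 0-cell explicitly.

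\textbf{Step 2: compare the graphs and trees.} The $(2,3)$-graph $\ourGraph_{\de_\cut}$ is obtained from $\ourGraph_\de$ by replacing the edge $e_D$ with two edges $e_\pm$, each joining $v_0$ to the respective 3-cell. If the cut disconnects $M$, the (2,3)-graph also splits, with two copies of $v_0$. Choose a spanning tree $\spnTree$ of $\ourGraph_\de$ containing $e_D$, whenever $e_D$ is not a loop at $v_0$. The corresponding tree $\spnTree_\cut$ then contains both $e_\pm$ in the disconnected case, or exactly one of them, say $e_+$, in the connected case. In the connected case $e_-$ receives a red loop along $\belt(D_-)$, which is a small circle around the point $q_-\in D_-$. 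Choose the points $Q$ away from $D$ and $q_\pm$ near the centres of $D_\pm$. Then all other red loops coincide, and the pushed blue graph $\cut_D(\Gamma)$ differs from the pushed $\Gamma$ exactly by the cutting along $D'$, with the two dual-basis coupons placed near $\bd D_\pm$. If $e_D$ is a loop at $v_0$ (both sides of $D$ on the boundary), $\spnTree$ avoids it, and a red belt appears on $\ourGraph_\de$ as well; this case is analogous.

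\textbf{Step 3: conclude.} The cross-section disc $D'$ in $\nb(\oneCell{M}_\de)$ meets the blue graph in the same projective strands that $D$ meets, and avoids the red loops. Corollary~\ref{cor:F-H-bi-properties}\,(4) therefore gives
\[
\cgpt^\mathrm{bi}(\fH_\de(M,\Gamma,Q,\spnTree))=\cgpt^\mathrm{bi}(\cut_{D'}\fH_\de(M,\Gamma,Q,\spnTree)).
\]
The right-hand side is $\fH_{\de_\cut}(\cut_D M,\cut_D\Gamma,Q_\cut,\spnTree_\cut)$, up to the extra red loop $\belt(D_-)$ in the connected case. That loop is not an obstruction: after cutting, the handle along $\bd D_-$ contains a disc meeting $\belt(D_-)$ once, so a red capping/digging move (Lemma~\ref{lem:hb-cap}) and sliding moves reconcile the two. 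The main obstacle is the bookkeeping in Steps~1--2: making the identification of the cut handlebody with $\nb$ of the cut 1-skeleton precise, and verifying that the red loop and spanning-tree discrepancies are exactly capping or sliding moves in both the separating and non-separating cases.
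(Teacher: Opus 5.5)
The overall strategy is the paper's: pick an adapted decomposition, then use Corollary~\ref{cor:F-H-bi-properties}\,(4) together with capping. But the central identification in your Step~1 is false, not just delicate, and Steps~2--3 inherit the error.

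\textbf{The Step~1 identification fails.} Suppose $\bd D$ consists of $k$ 0-cells and $k$ 1-cells. Then the 1-skeleton of $\de_\cut$ has $k$ more 0-cells and $k$ more 1-cells than that of $\de$, so its Euler characteristic is unchanged. Hence $\nb(\oneCell{(\cut_D M)}_{\de_\cut})$ has the same genus $g$ as $\nb(\oneCell{M}_\de)$ if $\cut_D M$ is connected, and total genus $g+1$ if it is not. Cutting a genus-$g$ handlebody along a disc gives genus $g-1$ or total genus $g$, so no disc $D'$ can do what you want.

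What actually relates the two thickened 1-skeleta is cutting along the annulus $D\cap\nb(\oneCell{M}_\de)$, bounded by $\bd D$ and $\belt(D)$. That is not a cutting move. Also, a cross-sectional (meridian) disc of a handle along $\bd D$ has boundary parallel to, not transverse to, the strands of $\Gamma$ crossing $\bd D$, so it does not cut them.

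\textbf{The Step~2 tree count is wrong in the separating case.} $\spnTree\setminus\{e_D\}$ has exactly one edge fewer than a spanning forest of the cut (2,3)-graph(s) needs. So in \emph{both} cases exactly one of $e_\pm$ enters $\spnTree_\cut$, and one of $\belt(D_\pm)$ always carries a red loop.

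\textbf{Consequence for Step~3.} Your claim that $\cut_{D'}\fH_\de(M,\Gamma,Q,\spnTree)$ is $\fH_{\de_\cut}(\cut_D M,\cut_D\Gamma,Q_\cut,\spnTree_\cut)$ ``up to the extra red loop'' cannot hold. Capping changes the underlying handlebody. The real content is finding a disc whose cut equals the \emph{capped} handlebody, compatibly with the blue graph and with the red belts of the 2-cells adjacent to $\bd D$. The proposal does not supply this.

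\textbf{The missing idea.} For cutting along $D$ to become a cutting move on a handlebody, the handlebody must contain a thickening of $D$ itself. You get this by putting a red loop on $\belt(D)$ and capping it. This is why the paper chooses the spanning tree to \emph{avoid} $D$:
\begin{itemize}
\item It takes $\bd D$ to be a single loop 1-cell with one 0-cell, flanked by boundary 2-cells $\twoCell{W}_\pm$ and 3-cells $\threeCell{A}_\pm$. Then $D,\twoCell{W}_+,\twoCell{W}_-$ form a cycle in $\ourGraph_\de(M)$, so a tree avoiding $D$ exists.
\item Capping $\gamma=\belt(D)$ gives $(H,\Phi)$ with the same value of $\cgpt^\mathrm{bi}$, by Lemma~\ref{lem:hb-cap}.
\item On the cut side it picks trees avoiding $D_\pm$; each $D_\pm$ forms a 2-cycle with $\twoCell{W}_\pm$. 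It then caps both $\gamma_\pm$.
\item The result is literally $\cut_K(H,\Phi)$, with $K$ a horizontal section of the vertical tube of the thickened 1-skeleton at the 0-cell on $\bd D$.
\item Corollary~\ref{cor:F-H-bi-properties}\,(4) and Lemma~\ref{lem:hb-cap} then finish the proof, uniformly in the connected and disconnected cases.
\end{itemize}
Your choice $e_D\in\spnTree$ discards precisely the red loop that makes this reduction work.
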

\begin{proof}
Without loss of generality, assume that $M$ is connected. By Theorem \ref{thm:tau-well-def}, we can choose a generic PLCW decomposition $\de$ of $(M, \Gamma)$ such that a tubular neighbourhood of $D$ consists of the following cells: 
\begin{equation}\label{eq:tau-cut-1}
\includegraphics[valign = c, scale=0.6]{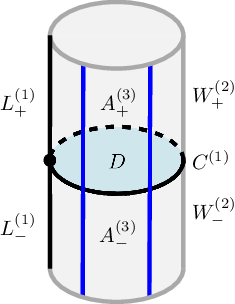}    
\end{equation}
Here, $D$ is a 2-cell that is bounded by one 1-cell $C^{(1)}$ and one 0-cell; the cylinder above and below $\bd D$ are parts of two 2-cells denoted by $\twoCell{W}_+$ and $\twoCell{W}_-$ respectively, whose boundaries also contain 1-cells $L_\pm^{(1)}$ above and below the 0-cell on $\bd D$. Finally, there is one 3-cell $\threeCell{A}_+$ above $D$, and one below $D$, denoted by $\threeCell{A}_-$.

Locally, the (2,3)-graph $\ourGraph_{\de}(M)$ is of the following form near $D$:
\begin{equation}
\includegraphics[valign=c,scale=0.7]{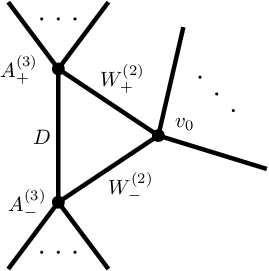}    
\end{equation}
Since the edges of $\ourGraph_{\de}$ corresponding to $D$, $\twoCell{W}_{+}$ and $\twoCell{W}_{-}$ form a cycle, there exists a spanning tree $\spnTree$ of $\ourGraph_{\de}$ such that $D \notin \spnTree$ (see Remark \ref{rmk:cycle-switch}\,(2)).
After picking a set $Q$ of interior points in 2-cells contained in $\bd M$, we obtain the handlebody $\fH_{\de}(M, \Gamma, Q, \spnTree)$ with bichrome diagrams on its boundary, and by definition,
\begin{equation*}
\tau_{\cC}(M, \Gamma) = \cgpt^\mathrm{bi}(\fH_{\de}(M, \Gamma, Q, \spnTree))\,.
\end{equation*}
Locally, the part of the handlebody $\fH_{\de}(M, \Gamma, Q, \spnTree)$ corresponding to the cells near $D$ is illustrated by the left hand side of \eqref{eq:H-Phi}
\begin{equation}\label{eq:H-Phi}
\includegraphics[valign=c,scale=0.6]{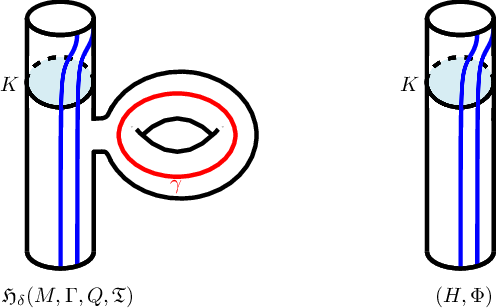}    
\end{equation}
where the genus is created by the 1-cell bounding $D$ and the red loop is present in $\fH_{\de}(M, \Gamma, Q, \spnTree)$ because $D \notin \spnTree$. Note that on the left hand side of \eqref{eq:H-Phi}, the vertical tube is part of the thickened 1-skeleton, and let $K$ be a horizontal section of this tube. We will later use the disc $K$ to perform cutting moves on multi-handlebodies with bichrome graphs.

Let $(H, \Phi) := \Cap_{\gamma}(\fH_{\de}(M, \Gamma, Q, \spnTree))$ be the handlebody with bichrome graph that is obtained from $\fH_{\de}(M, \Gamma, Q, \spnTree)$ by applying the red capping move along the red loop $\gamma$ (see \eqref{eq:cap-dig-1}). In particular, $\Phi = \Gamma \setminus \{ \gamma\}$, and by Lemma \ref{lem:hb-cap}, we have 
\begin{equation*}
\tau_{\cC}(M, \Gamma) = \cgpt^\mathrm{bi}(\fH_{\de}(M, \Gamma, Q, \spnTree)) = \cgpt^\mathrm{bi}(H, \Phi)\,.
\end{equation*}

Now consider $(\cut_D(M), \cut_D(\Gamma))$. We may assume that $\cut_D(M)$ admits the generic PLCW decomposition illustrated as follows:
\begin{equation}
\sum_{i}\quad
\includegraphics[valign=c,scale=0.6]{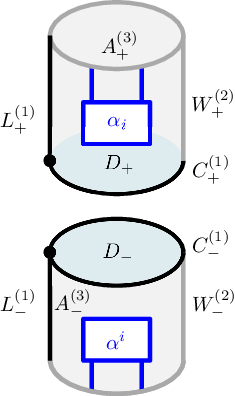}
\end{equation}
where on each side of the cut, we have a disk consisting of a 2-cell bounded by one 1-cell and one 0-cell, and other cells of $\cut_D(M)$ are identical to those of $M$. We denote the resulting PLCW decomposition of $(\cut_D(M), \cut_D(\Gamma))$ by $\de'$. 

If $\cut_D(M)$ is connected, then near the cut, the (2,3)-graph $\ourGraph_{\de'}(\cut_D(M))$ takes the form 
\begin{equation}\label{eq:tau-cut-5}
\includegraphics[valign=c,scale=0.7]{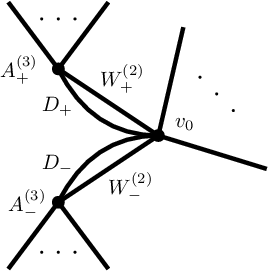}    
\end{equation}
As before, since the edges corresponding to $D_+$ (resp.~$D_-$) and $\twoCell{W}_+$ (resp.~$\twoCell{W}_-$) form a cycle, we can see that there exists a spanning tree $\spnTree'$ of $\ourGraph_{\de'}(\cut_D(M))$ that does not contain the edges corresponding to $D_+$ and $D_-$. 

If $\cut_D(M)$ is separated into 2 connected components $M_+$ and $M_-$, then $\de'$ restricts to PLCW decompositions on each of them. Let $\ourGraph_{\de'}(M_+)$ be the (2,3)-graph of $M_+$ associated to $\de'$ (or more precisely, the restriction of $\de'$ to $M_+$), similarly let $\ourGraph_{\de'}(M_-)$ be the (2,3)-graph of $M_-$. Then near the cut $\ourGraph_{\de'}(M_+)$ and $\ourGraph_{\de'}(M_-)$ are of the form 
\begin{equation}\label{eq:tau-cut-6}
\includegraphics[valign=c,scale=0.7]{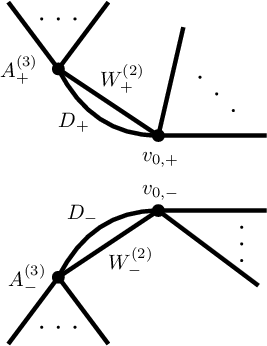}    
\end{equation}
where $v_{0, +}$ and $v_{0, -}$ are the corresponding distinguished vertices. 
The same argument as in the above implies that 
there exist spanning trees $\spnTree'_+$ and $\spnTree'_-$ of $\ourGraph_{\de'}(M_+)$ and $\ourGraph_{\de'}(M_-)$, respectively, such that $\spnTree'_+$ does not contain $D_+$ and $\spnTree'_-$ does not contain $D_-$.
In this case, denote $\cut_D(\Gamma)_+ := \cut_D(\Gamma) \cap M_+$ and $\cut_D(\Gamma)_- := \cut_D(\Gamma) \cap M_-$
    so that
$(\cut_D(M), \cut_D(\Gamma)) = (M_+, \cut_D(M)_+) \sqcup (M_-, \cut_D(M)_-)$.

Pick arbitrarily a set $Q'$ of interior points of boundary 2-cells of $\cut_D(M)$. If $\cut_D(M)$ is connected, denote 
\begin{equation*}
(H', \Phi') := \fH_{\de'}(\cut_D(M), \cut_D(\Gamma), Q', \spnTree') \ ,
\end{equation*}
while for $\cut_D(M)$ not connected set
\begin{equation*}
(H', \Phi') := \fH_{\de'}(M_+, \cut_D(\Gamma)_+ , Q'_+, \spnTree'_+) \sqcup \fH_{\de'}(M_-, \cut_D(\Gamma)_-, Q'_-, \spnTree'_-) \ ,
\end{equation*}
where $Q'_+ = Q' \cap M_+$ and $Q'_- = Q' \cap M_-$. 
Then by definition, in both cases, we have
\begin{equation}
\tau_{\cC}(\cut_D(M), \cut_D(\Gamma)) = \cgpt^\mathrm{bi}(H', \Phi')\,.
\end{equation}

By the above discussions, the local picture of $(H', \Phi')$ near the cut, regardless of the connectedness of $\cut_D(M)$, is of the form 
\begin{equation}\label{eq:H-prime}
\sum_{i}\quad 
\includegraphics[valign=c,scale=0.7]{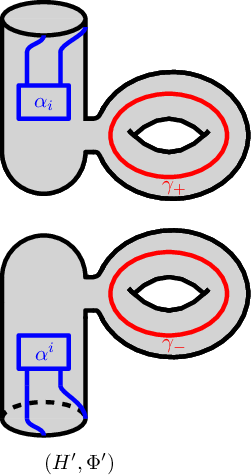}    
\end{equation}
where the red loops $\gamma_+$ and $\gamma_-$, corresponding to $D_+$ and $D_-$ respectively, are present in $(H', \Phi')$ because of our choice of spanning trees. Note also that outside of the neighbourhood of the cut depicted in \eqref{eq:H-prime}, $(H', \Phi')$ is identical to $(H, \Phi)$ (see \eqref{eq:H-Phi}). 

Applying the red capping move along $\gamma_+$ and $\gamma_-$ to $(H', \Phi')$, we obtain a new multi-handlebody $(H'', \Phi'')$ with bichrome graphs on it, i.e., 
\begin{equation}\label{eq:H-dbl-prime-cap}
(H'', \Phi'') = \Cap_{\gamma_+, \gamma_-}(H', \Phi')\,.    
\end{equation}
The local configuration of $(H'', \Phi'')$ near the cut is illustrated below:
\begin{equation}\label{eq:after-cut}
\sum_{i}\quad
\includegraphics[valign=c,scale=0.6]{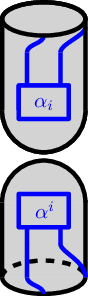}   
\end{equation}
By construction, outside of the neighbourhood of the cut depicted in \eqref{eq:after-cut}, $(H'', \Phi'')$ is identical to $(H', \Phi')$ and $(H, \Phi)$. Moreover, by comparing \eqref{eq:after-cut} and \eqref{eq:H-Phi}, we can see that
\begin{equation}\label{eq:H-dbl-prime-cut}
(H'', \Phi'') = (\cut_K(H), \cut_K(\Phi))\,.
\end{equation}
Therefore, we have 
\begin{equation}
\tau_{\cC}(M, \Gamma) = \cgpt^\mathrm{bi}(H, \Phi) = \cgpt^\mathrm{bi}(H'', \Phi'') = \cgpt^\mathrm{bi}(H', \Phi') = \tau_{\cC}(\cut_D(M), \cut_D(\Gamma)) 
\end{equation}
where the second equality follows from \eqref{eq:H-dbl-prime-cut} and Corollary~\ref{cor:F-H-bi-properties}, and the third equality follows from \eqref{eq:H-dbl-prime-cap} and Lemma \ref{lem:hb-cap}.
\end{proof}

\section{Open-closed TFT via the universal construction}\label{sec:ocTFT}

\subsection{Bulk-admissible bordisms}

Before defining the relevant bordism categories, we briefly set up the notations and recall some basic concepts on bordisms. For the rest of the paper, we assume that all manifolds are compact and oriented, and that for all $n \in \bN$, any embedding of an $(n-1)$-manifold into an $n$-manifold is equipped with a collar, which we usually suppress for simplicity. 

\begin{definition}\label{def:cob-sigma}
Suppose $\Sigma$, $\Sigma'$ are compact oriented surface with boundaries $O = \bd\Sigma$ and $O' = \bd\Sigma'$. 
A \emph{bordism} from $\Sigma$ to $\Sigma'$ is a compact oriented 3-manifold $M$ with corners.
The boundary $\bd M$ is equipped with a decomposition into two parts, the \textit{free boundary} $\bdf M$ and the \textit{gluing boundary} $\bdg M$ such that 
\begin{itemize}
\item $\bd M = \bdf M \cup \bdg M$, and
\item $\bdg M \cap \bdf M$ is precisely the set of corner points of $M$.
\end{itemize}
Furthermore, $M$ is equipped with an orientation preserving diffeomorphism $-\Sigma \sqcup \Sigma' \to \bdg M$ (the \textit{gluing boundary parametrisation}) such that $O \sqcup O'$ is mapped onto $\bdg M \cap \bdf M$.
We call $\Sigma$ and $\Sigma'$ the source and target of $M$, and we write $M: \Sigma \to \Sigma'$.
\end{definition}

Note that the above decomposition of $\bd M$ endows $M$ with the structure of a 3-dimensional $\langle 2 \rangle$-manifold (see for example \cite{LP08, HR24}). 

Let $(\cC, \mtr, \pi_\1)$ be as in \eqref{eq:oc-invariant-alg-input}, and let $(\Sigma, L)$ and $(\Sigma', L')$ be $\bd$-marked surfaces (Definition~\ref{def:d-marked-surface}) over $\cC$. An orientation-preserving diffeomorphism from $(\Sigma, L)$ to $(\Sigma', L')$ is an orientation-preserving diffeomorphism $f: \Sigma \to \Sigma'$ such that $f(\bd\Sigma)=\bd\Sigma'$, $f(L)=L'$, and for any marked point $x \in L$, its label equals to the label of $f(x) \in L'$. In the following, all diffeomorphisms are assumed to be orientation-preserving, and by abuse of notation, we write $f: (\Sigma, L) \to (\Sigma', L')$. 

\begin{definition}\label{def:adm-cob}
Let $(\cC, \mtr, \pi_\1)$ be as in \eqref{eq:oc-invariant-alg-input}, and let $(\Sigma, L)$, $(\Sigma',L')$ be $\bd$-marked surfaces over $\cC$. 
\begin{enumerate}
\item 
A bordism of $\bd$-marked surfaces from $(\Sigma, L)$ to $(\Sigma',L')$ is a pair $(M,\Gamma)$, where $M:\Sigma \to \Sigma'$ is a bordism of compact oriented surfaces, and $\Gamma$ is a $\cC$-graph
in the $\bd$-marked surface $(\bdf M, -L \sqcup L')$. 
We write $(M,\Gamma): (\Sigma, L) \to (\Sigma',L')$ and call $(\Sigma, L)$ and $(\Sigma',L')$ the source and target of $(M,\Gamma)$ respectively. 

\item
A bordism $(M,\Gamma): (\Sigma, L) \to (\Sigma',L')$ of $\bd$-marked surfaces is called \emph{bulk-admissible} if it satisfies the following condition: 
For each connected component $J$ of $M$ that is disjoint from the input surface $\Sigma$, there is at least one blue edge of $\Gamma$ that is contained in $\bd{J}$ and is labelled by an object in $\Proj_\cC$.
In this case, we will also call $\Gamma$ a bulk-admissible $\cC$-graph in the bordism $(M,\Gamma)$. 
\end{enumerate}
\end{definition}

\begin{remark}\label{rmk:adm}
The bulk-admissibility condition in Definition \ref{def:adm-cob} can be viewed as a generalization of that in Definition \ref{def:adm-1}. Let $(M,\Gamma): \emptyset \to \emptyset$ be a bordism from the empty set to itself. Such a bordism is also called a closed bordism. In particular, we have $\bdg M = \emptyset$, $\bdf M = \bd M$, and $\Gamma\subset\bd M$ is a graph with no free ends. In this case, $(M,\Gamma)$ is a bulk-admissible bordism in the sense of Definition \ref{def:adm-cob} if and only if $\Gamma$ is a bulk-admissible $\cC$-graph in $M$ in the sense of Definition \ref{def:adm-1}. 

Note also that if we glue two bulk-admissible bordisms of $\bd$-marked surfaces along their common gluing boundaries, then the resulting bordism is again bulk-admissible.
\end{remark}

Let $(M,\Gamma)$ and $(M',\Gamma'):(\Sigma, L) \to (\Sigma', L')$ be two bulk-admissible bordisms of $\bd$-marked surfaces. An orientation preserving diffeomorphism $\varphi: M \to M'$ is called an isomorphism from $(M,\Gamma)$ to $(M',\Gamma')$ if $\varphi$ preserves the embeddings of $-\Sigma \cup\Sigma'$ into $M$ and $M'$ respectively, and $\vphi(\Gamma) = \Gamma'$. We call the isomorphism class of $(M,\Gamma): (\Sigma, L) \to (\Sigma', L')$ the bordism class of $(M,\Gamma)$, and we denote it by $[M,\Gamma]: (\Sigma, L) \to (\Sigma', L')$ or simply $[M,\Gamma]$.

Note that $\bd$-marked surfaces, with bulk-admissible bordism classes between them as morphisms, form a category: It is routine to show that the composition of bordism classes by gluing representatives of bordism classes along their common gluing boundaries is well-defined and associative. The identity morphism of $(\Sigma, L)$ is simply $(\Sigma \times [0,1], L \times [0,1])$, where the lines $L\times [0,1]$ are coloured in blue and labelled by the same object as their corresponding end points. Moreover, this category is symmetric monoidal under disjoint union. 

\begin{definition}\label{def:Bord-C}
Let $(\cC, \mtr, \pi_\1)$ be as in \eqref{eq:oc-invariant-alg-input}. The category of 3-dimensional non-compact open-closed bordisms over $\cC$, denoted by $\ourBord(\cC)$, is the symmetric monoidal category whose objects are $\bd$-marked surfaces over $\cC$, and whose morphisms are bulk-admissible bordism classes between $\bd$-marked surfaces over $\cC$. A symmetric monoidal functor from $\ourBord(\cC)$ to a symmetric monoidal category is called a non-compact open-closed topological field theory. 
\end{definition}

\begin{exmp}\label{ex:cylinder}
Let $(\Sigma, L)$ and $(\Sigma', L')$ be $\bd$-marked surfaces over $\cC$, and $f: (\Sigma, L) \to (\Sigma', L')$ be an orientation-preserving diffeomorphism. We define the \emph{mapping cylinder} of $f$ to be the bordism $\bM(f):=(M_f, L \times [0,1])$, where $M_f = (\Sigma, L) \sqcup ((\Sigma', L') \times [0, 1]) /(x \sim (f(x), 0))$ is the cylinder over $\Sigma'$ glued with $\Sigma$ along $f$, which is viewed as a bordism from $\Sigma$ to $\Sigma'$. More specifically, the boundary identifications of $M_f$ are $(f, 0): \Sigma \to \Sigma' \times \{0\}$ and $(\id_{\Sigma'}, 1): \Sigma' \to \Sigma'\times\{1\}$. By definition, $\bM(f)$ is bulk-admissible, and it is easy to check that the bordism class of $\bM(f)$ only depends on the mapping class of $f$. In other words, $[f]=[f'] \in \MCG(\Sigma, L)$ implies $[\bM(f)] = [\bM(f')] \in \End(\Sigma, L)$ in $\ourBord(\cC)$, and we have a well-defined assignment $\MCG(\Sigma, L) \to \End(\Sigma, L)$, $[f] \mapsto [\bM(f)]$.
\end{exmp}

\subsection{Universal construction}

Let $(\cC, \mtr, \pi_\1)$ be as in \eqref{eq:oc-invariant-alg-input}.
In view of Remark \ref{rmk:adm}, the invariant $\tau_\cC$ is a multiplicative invariant of bulk-admissible closed bordism classes of $\bd$-marked surfaces. We will show that it can be extended to a topological field theory $\ourV: \ourBord(\cC) \to \Vect_{\kk}$ via the universal construction (see \cite{BHMV2} and e.g.\ \cite[Ch.\,7.7]{Geer-Patureau-book}).

For any $(\Sigma, L) \in \ourBord(\cC)$, consider the following vector spaces with basis given by Hom-spaces in $\ourBord(\cC)$,
\begin{equation}
\begin{split}
    \tbv(\Sigma, L) &:= \span_{\kk}\{[M,\Gamma] \in \Hom(\emptyset, (\Sigma, L))\} \ ,
    \\
\tbv'(\Sigma,L) &:= \span_{\kk}\{[M',\Gamma'] \in \Hom((\Sigma, L), \emptyset)\}\,.
\end{split}
\end{equation}
Then $\tau_{\cC}$ extends by linearity to a natural pairing between $\tbv'(\Sigma, L)$ and $\tbv(\Sigma,L)$:
\begin{equation}
\pair{\cdot, \cdot}: \tbv'(\Sigma, L) \times \tbv(\Sigma,L) \to \kk\,, \quad \pair{[M',\Gamma'], [M, \Gamma]} := \tau_{\cC}(M'\cup_{(\Sigma, L)} M, \Gamma' \cup \Gamma)\,.
\end{equation}
When there is more than one $\bd$-marked surface involved, we write $\pair{\cdot, \cdot}_{(\Sigma, L)}$ to be more precise.
Define
\begin{equation}\label{eq:V-on-obj}
\ourV(\Sigma,L) := \tbv(\Sigma, L)/\rad(\pair{\cdot, \cdot})\,,
\end{equation}
where $\rad(\pair{\cdot, \cdot}) = \{w \in \tbv(\Sigma, L)\mid \pair{v,w}=0\ \forall v \in \tbv'(\Sigma,L)\}$ is the right radical of the pairing $\pair{\cdot, \cdot}$. 

For any bordism class $[M,\Gamma]: (\Sigma, L)\to (\Sigma',L')$, the associativity of the gluing of bordisms implies that the linearization of the assignment 
\begin{equation}\label{eq:V-on-bordism}
\begin{split}
\Hom(\emptyset, (\Sigma,L)) &\to \Hom(\emptyset, (\Sigma',L'))\,,\\
[N,\Phi] &\mapsto [M,\Gamma]\circ[N,\Phi] = [(M \cup_{(\Sigma,L)} N, \Gamma \cup \Phi)]
\end{split}\end{equation}
induces a well-defined linear map $\ourV[M,\Gamma]: \ourV(\Sigma,L) \to \ourV(\Sigma',L')$. For simplicity, we will use $[M,\Gamma]: \emptyset \to (\Sigma, L)$ to denote both a bordism class in $\tbv(\Sigma, L)$ and its image in $\ourV(\Sigma, L)$. 

As usual in the universal construction, at this point we already have a functor from bordisms to (possibly infinite dimensional) vector spaces,
\begin{equation}\label{eq:V-is-functor}
    \ourV: \ourBord(\cC) \to \Vect_{\kk} \ ,
\end{equation}
which takes an object $(\Sigma,L)$ to $\ourV(\Sigma,L)$ as in \eqref{eq:V-on-obj}, and which acts on bordisms as \eqref{eq:V-on-bordism}. The more involved part is to show monoidality. The first step is a lax monoidal structure which we define next. The main step will be showing that this is actually a monoidal structure.

\medskip

Let $(\Sigma_1, L_1)$, $(\Sigma_2, L_2)$ be two $\bd$-marked surfaces. Define the linear map 
\begin{equation}
    \begin{split} 
\tilde\Psi : \tbv(\Sigma_1, L_1) \otimes_{\kk} \tbv(\Sigma_2, L_2) &\to \ourV(\Sigma_1 \sqcup \Sigma_2, L_1 \sqcup L_2)\\ 
v_1 \ot v_2 &\mapsto [v_1 \sqcup v_2]
\end{split}
\end{equation}
induced by taking disjoint unions. While $\tilde\Psi$ is not injective, it turns out to become injective once we descend to the quotient also in the domain vector space. The proof is standard for universal constructions (see e.g.\ \cite[Sec.\,7.7]{Geer-Patureau-book}), and we write it out for convenience of the reader.

\begin{lemma}\label{lem:disjoint-union}
$\tilde\Psi$ descends to a well-defined injective linear map 
\begin{equation}\label{eq:disjoint-union}
\Psi : \ourV(\Sigma_1, L_1) \ot_{\kk} \ourV(\Sigma_2, L_2) \to \ourV(\Sigma_{1} \sqcup \Sigma_2, L_1 \sqcup L_2)\,.
\end{equation}
\end{lemma}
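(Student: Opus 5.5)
The proof has two parts: well-definedness, which follows from the definition of the radical, and injectivity, which uses that a finite set of vectors in a quotient space can be separated by dual functionals coming from the pairing.

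\textbf{Well-definedness.} I will show that $\tilde\Psi$ vanishes on $\rad_1 \otimes \tbv(\Sigma_2,L_2)$, where $\rad_1$ is the right radical for $(\Sigma_1,L_1)$; the other factor is symmetric. Let $w_1 \in \rad_1$ and let $v_2$ be a bordism class $\emptyset \to (\Sigma_2,L_2)$. Take any $[M',\Gamma'] \in \Hom((\Sigma_1\sqcup\Sigma_2, L_1\sqcup L_2),\emptyset)$. Gluing $M'$ with $v_2$ along $(\Sigma_2,L_2)$ only produces a bordism $N' : (\Sigma_1,L_1)\to\emptyset$. By associativity of gluing,
\[
\pair{[M',\Gamma'], w_1\sqcup v_2} = \pair{N', w_1}_{(\Sigma_1,L_1)}.
\]
We must check that $N'$ is bulk-admissible. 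A component of $N'$ disjoint from $\Sigma_1$ consists of components of $M'$ and of $v_2$ glued together. If it contains a component of $v_2$, that component is disjoint from the source $\emptyset$ and so carries a projective edge. Otherwise it is a component of $M'$ disjoint from the whole source and so also carries one. Hence $N' \in \tbv'(\Sigma_1,L_1)$, the right-hand side vanishes since $w_1$ is in the radical, and $\tilde\Psi$ descends to $\Psi$.

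\textbf{Injectivity.} Let $x=\sum_k a_k\otimes b_k$ with $\{a_k\}$ linearly independent in $\ourV(\Sigma_1,L_1)$, and suppose $\Psi(x)=0$. The pairing induces a map $\ourV(\Sigma_1,L_1)\to\tbv'(\Sigma_1,L_1)^*$, which is injective by the definition of the quotient. The functionals $\pair{v,\cdot}$ separate points, and $\{a_k\}$ is a finite independent set, so by a standard linear algebra argument there are $u_j\in\tbv'(\Sigma_1,L_1)$ with $\pair{u_j,a_k}=\delta_{jk}$. Fix $j$ and any $v\in\tbv'(\Sigma_2,L_2)$, and form $u_j\sqcup v\in\tbv'(\Sigma_1\sqcup\Sigma_2,L_1\sqcup L_2)$. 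This is bulk-admissible, since each component of a disjoint union inherits the condition.

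Pairing it with $\Psi(x)=0$ gives
\[
0=\sum_k \pair{u_j\sqcup v, a_k\sqcup b_k}=\sum_k \pair{u_j,a_k}\,\pair{v,b_k}=\pair{v,b_j}.
\]
The middle equality holds because the glued closed manifold is the disjoint union of $u_j\cup a_k$ and $v\cup b_k$, and $\tau_\cC$ is multiplicative under disjoint union by Definition~\ref{def:tau}. Here we need both closed pieces to be bulk-admissible. Each component of $u_j\cup a_k$ contains a component of $a_k$ or is a component of $u_j$ disjoint from the source, so it carries a projective edge by the same argument as above; the same holds for $v\cup b_k$. Since $v$ was arbitrary, $b_j=0$ in $\ourV(\Sigma_2,L_2)$ for every $j$, hence $x=0$.

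I expect the only real obstacle to be the careful verification of bulk-admissibility of these glued and disjoint-union bordisms, including components that meet no gluing surface. Everything else is formal.
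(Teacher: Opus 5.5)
Your proof is correct and follows the same route as the paper. Well-definedness comes from partially gluing the test bordism to one factor and invoking the radical of the other. Injectivity comes from pairing against disjoint unions $u_1\sqcup u_2$ and using multiplicativity of $\tau_\cC$.

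Your dual-basis argument makes explicit the paper's claim that a nonzero element of $\ourV(\Sigma_1,L_1)\otimes_{\kk}\ourV(\Sigma_2,L_2)$ is detected by some pair $u_1,u_2$. Your bulk-admissibility checks fill in details the paper leaves implicit.
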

\begin{proof}
For simplicity, we denote $S_1 := (\Sigma_1, L_1)$, $S_2 := (\Sigma_2, L_2)$, and $S_{1 \sqcup 2} := (\Sigma_1 \sqcup \Sigma_2, L_1 \sqcup L_2)$. For the radicals we write accordingly $R_x = \rad(\pair{\cdot,\cdot}_{S_x})$ for $x \in \{ 1,2, 1 \sqcup 2 \}$. 

As a first step, we express the tensor product of quotients as a quotient of the tensor product,
\[
\ourV(S_1) \ot_{\kk} \ourV(S_2)
=
\big( \tbv(S_1) \otimes_{\kk} \tbv(S_2) \big) / \big( 
\tbv(S_1) \otimes_{\kk} R_2
+
R_1 \otimes_{\kk} \tbv(S_2)
\big) \ .
\]
To check well-definedness, we show that $\tilde\Psi$ vanishes on the two summands of the quotient. Let thus $v \in \tbv(S_1)$ and $w \in R_2$. Then for any $u \in \tbv'(S_{1\sqcup 2})$, 
\[
\begin{split}
\pair{u,\tilde\Psi(v \ot_\kk w)}_{S_{1 \sqcup 2}} 
&= 
\pair{u,v\sqcup w}_{S_{1 \sqcup 2}} 
= 
\tau_{\cC}(u\cup_{S_{1\sqcup 2}} (v \sqcup w)) 
= 
\tau_{\cC}( (u \cup_{S_1} v) \cup_{S_2} w )\\ 
&= \pair{(u \cup_{S_1} v) , w}_{S_2} 
= 
0 \ ,
\end{split}
\]
where the last step uses that $w \in R_2$. Hence $\tilde\Psi(v \ot_\kk w) \in R_{1 \sqcup 2}$ and so is zero in $\ourV(S_{1 \sqcup 2})$. The calculation that $\tilde\Psi(v \ot_\kk w)=0$ for $v \in R_1$ and $w \in \tbv(S_2)$ is analogous. Hence $\Psi$ is well-defined. 

Next we show that $\Psi$ is injective. Let $h = \sum_i  v_i \ot_\kk w_i \in \tbv(S_1) \ot_{\kk} \tbv(S_2)$ be such that $h \notin \tbv(S_1) \otimes_{\kk} R_2
+
R_1 \otimes_{\kk} \tbv(S_2)$, i.e., $[h]$ is non-zero in $\ourV(S_1) \ot_{\kk} \ourV(S_2)$. We will show that $\tilde\Psi(h) \notin R_{1 \sqcup 2}$, which implies that $\Psi$ is injective. By assumption on $h$, there must be $u_x \in \tbv(S_x)$, $x=1,2$, such that
\[
    \sum_i \pair{u_1,v_i}_{S_1}\pair{u_2,w_i}_{S_2} \neq 0 \ .
\]
But then also 
\[
\pair{u_1 \sqcup u_2,\tilde\Psi(h)}_{S_{1 \sqcup 2}} 
= 
\sum_i 
\pair{u_1 \sqcup u_2,v_i \sqcup w_i}_{S_{1 \sqcup 2}} 
= 
\sum_i 
\pair{u_1,v_i}_{S_{1}} 
\pair{u_2,w_i}_{S_{2}} 
\neq 0 \ ,
\]
and so $\tilde\Psi(h) \notin R_{1 \sqcup 2}$.
\end{proof}

For monoidality of $\ourV$, it remains to show surjectivity of $\Psi$ in \eqref{eq:disjoint-union}. To do so, we investigate a special family of elements of $\ourV(\Sigma, L)$ called collar elements.

\begin{definition}\label{def:collar}
Let $(\Sigma, L)$ be a $\bd$-marked surface. A \emph{collar element} of $\ourV(\Sigma, L)$ is a finite linear combination of elements of the form $[\bbSigma, \Gamma]$ such that $\bbSigma \cong \Sigma\times [0,1]$ and $\Gamma$ is an admissible $\cC$-graph. Here, we view $\bbSigma$ as a bordism from $\emptyset$ to $(\Sigma, L)$ with gluing boundary $\bdg\bbSigma = \Sigma\times\{1\}$ and free boundary $\bdf\bbSigma = (\Sigma\times\{0\}) \cup ((\bd\Sigma)\times[0,1])  \cong \Sigma$, and we smoothen the corner at $\bd\Sigma\times\{0\}$. The admissible $\cC$-graph $\Gamma$ is contained in the free boundary $\bdf\bbSigma$ and ends in $L$.
\end{definition}

\begin{prop}\label{prop:collar}
For any $\bd$-marked surface $(\Sigma, L)$, $\ourV(\Sigma, L)$ is spanned by collar elements.
\end{prop}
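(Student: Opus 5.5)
We take an arbitrary generator $[M,\Gamma]\in\tbv(\Sigma,L)$, i.e.\ a bulk-admissible bordism $\emptyset\to(\Sigma,L)$, and show that modulo the radical it equals a linear combination of collar elements. Since $\ourV(\Sigma,L)$ is spanned by such classes, this suffices. We may treat each connected component of $M$ separately, but disjoint unions of collars over different components of $\Sigma$ are again collars, and components of $M$ with empty gluing boundary are closed bordisms contributing the scalar $\tau_\cC$. So we reduce to $M$ connected with nonempty gluing boundary $\bdg M\cong\Sigma$ (possibly several components of $\Sigma$), carrying a projective edge on $\bdf M$ by bulk-admissibility.

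The strategy is to cut $M$ into a collar by cutting discs, using Proposition~\ref{prop:cut} inside arbitrary pairings. Since $\pair{[M',\Gamma'],[M,\Gamma]}=\tau_\cC(M'\cup M,\Gamma'\cup\Gamma)$ and a cutting disc $D\subset M$ with $\bd D\subset\bdf M$ is also a cutting disc in $M'\cup M$, Proposition~\ref{prop:cut} gives $[M,\Gamma]-[\cut_D M,\cut_D\Gamma]\in\rad$. Thus $[M,\Gamma]=\sum_i[\cut_D(M),\cut_D(\Gamma)_i]$ in $\ourV(\Sigma,L)$.

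For the cutting, we choose a relative handle decomposition of $M$ built from the collar $\Sigma\times[0,1]$ (glued along $\Sigma\times\{1\}=\bdg M$) by attaching $1$-, $2$- and $3$-handles to $\Sigma\times\{0\}$. Dually, $M$ is obtained from $\bdf M$ by handles. The cocores of the $1$-handles are properly embedded discs with boundary in $\bdf M$. After isotoping a projective strand of $\Gamma$ (using the connectivity of $\bdf M$ within a component, or passing through skein relations as in Proposition~\ref{prop:skein}) so that it crosses each such boundary circle, these become cutting discs. Cutting along them removes the $1$-handles.

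The $2$-handles are the main obstacle. Their cocores are arcs, not discs, so we cannot cut them directly. Instead we work from the other side. In the dual decomposition relative to $\bdf M$, the handles attached to $\bdf M$ have cores that are discs. We would try the following: the $2$-handles of $M$ relative to $\Sigma\times\{0\}$ have cores $D$ whose boundaries lie on $\Sigma\times\{0\}$ union earlier handles, and these do not touch $\bdf M$. So we first rearrange: choose a Heegaard-type splitting $M=C\cup_{S}(\text{compression body})$ with $\bdg M$ on one side. The other side, containing $\bdf M$, is a compression body $B$ whose compressing discs have boundary on $\bdf M$ side. Cutting $M$ along a complete system of compressing discs of the part adjacent to $\bdf M$ (each made admissible by dragging a projective strand across, with $3$-balls produced then removed via evaluation to scalars by Theorem~\ref{thm:CGP23}(2) as closed components) reduces $M$ to $\Sigma\times[0,1]$ together with ball components. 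The ball components, having free boundary $\bS^2$ and no gluing boundary, contribute scalars $\cgpt'$. What remains is a sum of collars $[\bbSigma,\Gamma_i]$. Each component of the remaining collar retains a projective coupon from the cut, so the $\Gamma_i$ are admissible.

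The delicate points are:
\begin{enumerate}
\item Guaranteeing that every compressing disc meets a projective strand. We handle this by the skein-invariance of Proposition~\ref{prop:skein}, which lets us isotope and tensor strands together with a projective edge along paths on $\bdf M$ before cutting.
\item Ensuring that $M$ minus a collar neighbourhood of $\bdg M$ is indeed a compression body relative to $\bdf M$.
\end{enumerate}
For the latter, I would use that $M$ deformation retracts onto $\bdg M\cup(\text{handles})$ with no $0$-handles. Dually, $M$ is $\bdf M\times I$ with $1$- and $2$-handles and no $3$-handles when $\bdg M\neq\emptyset$. Cutting along the cores of the dual $2$-handles is exactly a sequence of cutting moves whose discs have boundary in $\bdf M$.
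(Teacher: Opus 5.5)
Your preliminary reductions are fine, and so is the observation that Proposition~\ref{prop:cut}, applied inside the pairing, gives $[M,\Gamma]=\sum_i[\cut_D(M),\cut_D(\Gamma)_i]$ in $\ourV(\Sigma,L)$ for any cutting disc $D$ with $\bd D\subset\bdf M$. The genuine gap is your core claim that $M$ can be reduced to $\Sigma\times[0,1]$ plus balls by such cuts. This is false in general.

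Take $\Sigma=\bS^2$ and $M=Y\setminus(\bB\sqcup\bB)$ with $Y$ a lens space. One boundary sphere is glued, and the other is free and carries the graph $I$ of Section~\ref{sec:CGPT-compare}. During any sequence of cuts, all free boundary components stay spheres. So each cut amounts, after capping spheres by balls, to splitting along an embedded sphere. By uniqueness of prime decomposition the $Y$ summand survives every cut, and you never arrive at a collar plus balls. (Similarly, for a hyperbolic two-component link exterior with one torus glued and one free, $\bdf M$ is incompressible and there is no essential cutting disc at all.) Yet $[M,I]=\mathscr{I}_\cC(Y)\,[\Sigma\times[0,1],I]$ in $\ourV(\bS^2,\emptyset)$. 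So the closed-manifold invariant must be produced somewhere, and cutting plus evaluating balls cannot do it.

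Concretely, two of your steps fail. First, the cocores of the 1-handles relative to the collar do not have boundary in $\bdf M$ as soon as a 2-handle runs over them. Second, $M$ minus a collar of $\bdg M$ is not a compression body with $\bd_+=\bdf M$. Relative to $\bdf M$ there are 1-handles, dual to the 2-handles relative to $\bdg M$. The attaching circles of the 2-handles run over these 1-handles, so their cores do not extend to discs with boundary in $\bdf M$. Also, the compressing discs of a Heegaard splitting have boundary on the Heegaard surface, which is interior to $M$.

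The missing idea is to trade the 2-handles for red loops rather than cut them. In the paper, one triangulates $M$ compatibly with a collar $\Sigma\times[0,1]$ and keeps only $M_1=(\Sigma\times[0,1])\cup\nb\big((M\setminus\Sigma\times[0,1])^{(1)}\big)$, i.e.\ the collar plus 1-handles. On $\bd M_1$ one places $\Gamma$ together with red loops along the belts of those 2-cells outside the collar that are not in a spanning tree of the (2,3)-graph with the $\Sigma\times\{1\}$ edges deleted; the spanning tree takes care of the 3-cells. Only after applying the red-to-blue map are the meridian discs of the 1-handles legitimate cutting discs, and cutting them yields the collar element $u$.

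The substantive step is then $\tau_\cC(N\cup M,\Phi\cup\Gamma)=\tau_\cC(N\cup M_1,\Phi\cup\operatorname{RB}(\Gamma_1))$ for every $(N,\Phi)$. The paper proves it in three moves:
\begin{enumerate}
\item build a PLCW decomposition of $M_1$ by collapsing the 1-skeleton onto $\Sigma\times\{0\}$ and adding, per 1-handle, two loops and 2-cells $W,W_\pm$;
\item choose compatible spanning trees on $N\cup M$ and $N\cup M_1$, so that the red loops agree except for those on the $W_\pm$;
\item remove those extra loops by red capping (Lemma~\ref{lem:hb-cap}).
\end{enumerate}
Nothing in your proposal plays the role of this red-loop encoding of 2-handles, which is exactly how $\tau_\cC$ sees the topology that cutting moves cannot remove.
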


The proof of Proposition \ref{prop:collar} will be given in Section \ref{sec:collar-pf}.

\medskip

Let $(\Sigma, L)$ be a $\bd$-marked surface and $[\bbSigma, \Gamma]$ a collar element of $\ourV(\Sigma, L)$. Recall from Section \ref{sec:sk-mod} that $\tilde{\sk}_{\adm}(\Sigma,L)$ stands for the $\kk$-linear span of all admissible blue graphs, and so by definition, $\Gamma \in \tilde{\sk}_{\adm}(\Sigma, L)$. Moreover, any admissible $\cC$-graph $\Gamma$ in $(\Sigma, L)$ gives rise to a collar element $[\bbSigma, \Gamma] \in \ourV(\Sigma, L)$ by identifying the free boundary of $\bbSigma$ with $\Sigma$ and embedding $\Gamma$ in it. By Proposition \ref{prop:collar}, we have a surjective linear map 
\begin{equation}\label{eq:T-tilde}
\tilde{E}(\Sigma, L): \tilde{\sk}_{\adm}(\Sigma,L) \to \ourV(\Sigma, L)\,.
\end{equation}
When $(\Sigma, L)$ is clear from the context, we will also write $\tilde{E}$ for $\tilde{E}(\Sigma, L)$.

Recall from Definition \ref{def:sk-adm} that the admissible skein module $\sk_{\adm}(\Sigma, L)$ of $(\Sigma, L)$ is the quotient of $\tilde{\sk}_{\adm}(\Sigma, L)$ by the subspace $\cN_{\adm}(\Sigma, L)$ spanned by admissible skein relations.

\begin{prop}\label{prop:our-V}
Let $(\cC, \mtr, \pi_\1)$ be as in \eqref{eq:oc-invariant-alg-input}, and let $(\Sigma, L)$ be a $\bd$-marked surface.
Then $\tilde{E}(\Sigma, L)$ factors through a surjective map $E(\Sigma, L): \sk_{\adm}(\Sigma, L) \to \ourV(\Sigma, L)$. In particular, $\ourV(\Sigma, L)$ is finite-dimensional. 
\end{prop}
\begin{proof}
The map $\tilde{E}(\Sigma, L)$ is already surjective. 
For well-definedness of $E(\Sigma,L)$, we need to show that $\cN_{\adm}(\Sigma, L) \subset \ker(\tilde{E}(\Sigma, L))$.
Let thus $v \in \cN_{\adm}(\Sigma, L)$ be an admissible skein relation and write $v = \sum_i c_i \Gamma_i$ as a sum of graphs on $(\Sigma,L)$. Then for all $u \in \tbv'(\Sigma,L)$ we have
\[
\pair{u,\tilde E(\Sigma,L)(v)}
=
\sum_i \tau_\cC( u \cup_\Sigma (\bbSigma,\Gamma_i) ) 
= 0
\]
as $\tau_\cC$ respects skein relations by Proposition~\ref{prop:skein} (an admissible skein relation is in particular 1-admissible). Hence $\tilde E(\Sigma,L)(u)$ is in the radical of the pairing and therefore zero in $\ourV(\Sigma,L)$.

Finally, in light of Proposition \ref{prop:finite-1}, existence of the surjective map $E(\Sigma,L)$ immediately implies that $\ourV(\Sigma, L)$ is finite-dimensional. 
\end{proof}

We can now state the main result of this paper:

\begin{thm}\label{thm:ocTFT}
Let $(\cC, \mtr, \pi_\1)$ be as in \eqref{eq:oc-invariant-alg-input}. The assignments $(\Sigma, L) \mapsto \ourV(\Sigma, L)$ and $[M, \Gamma] \mapsto \ourV[M, \Gamma]$ of objects and morphisms of $\ourBord(\cC)$ to finite-dimensional vector spaces and linear maps defines an open-closed topological field theory, i.e., a symmetric monoidal functor $\ourV: \ourBord(\cC) \to \Vect_{\kk}$. 
\end{thm}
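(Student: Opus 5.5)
The plan is to assemble the theorem from the pieces already in place: functoriality, finite-dimensionality, and the lax monoidal structure. Functoriality of $\ourV$ is the standard universal-construction argument in \eqref{eq:V-is-functor}. Well-definedness of $\ourV[M,\Gamma]$ on the quotient follows because, for $w$ in the radical of $\pair{\cdot,\cdot}_{(\Sigma,L)}$ and any $u\in\tbv'(\Sigma',L')$, we have
\[
\pair{u,[M,\Gamma]\circ w}_{(\Sigma',L')}=\pair{u\circ[M,\Gamma],w}_{(\Sigma,L)}=0 .
\]
Bulk-admissibility of $u\circ[M,\Gamma]$ holds by Remark~\ref{rmk:adm}, so the pairing makes sense. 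Compatibility with composition and identities is immediate from associativity of gluing. Finite-dimensionality of every $\ourV(\Sigma,L)$ is Proposition~\ref{prop:our-V}, so $\ourV$ lands in finite-dimensional vector spaces.

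For monoidality I would take the maps $\Psi$ of Lemma~\ref{lem:disjoint-union} as the candidate monoidal structure and show they are isomorphisms. Injectivity is already established, so only surjectivity of $\Psi$ remains. Here I would invoke Proposition~\ref{prop:collar}: $\ourV(\Sigma_1\sqcup\Sigma_2,L_1\sqcup L_2)$ is spanned by collar elements $[\Sigma_{12}\times[0,1],\Gamma]$ with $\Gamma$ admissible. Since $(\Sigma_1\sqcup\Sigma_2)\times[0,1]=\Sigma_1\times[0,1]\sqcup\Sigma_2\times[0,1]$, and admissibility means a projective edge in every connected component of the surface, $\Gamma$ splits as $\Gamma_1\sqcup\Gamma_2$ with each $\Gamma_i$ admissible on $(\Sigma_i,L_i)$. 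Hence each collar element equals $\Psi([\bbSigma_1,\Gamma_1]\otimes[\bbSigma_2,\Gamma_2])$, and $\Psi$ is surjective.

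For the unit, I would check that $\ourV(\emptyset)\cong\kk$. This requires the following.
\begin{itemize}
\item The empty bordism spans, i.e.\ every closed bulk-admissible $[M,\Gamma]$ is proportional to $[\emptyset]$ modulo the radical. This follows since $\pair{u,[M,\Gamma]}=\tau_\cC(u)\tau_\cC(M,\Gamma)=\tau_\cC(M,\Gamma)\pair{u,[\emptyset]}$ by multiplicativity of $\tau_\cC$ under disjoint union (Definition~\ref{def:tau}).
\item $[\emptyset]$ is nonzero, since pairing it with $(\bD^3,I)$ gives $1$, as computed for $\mathscr{I}_\cC(\bS^3)$.
\end{itemize}

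Finally, naturality of $\Psi$ with respect to bordisms, and the associativity, unit and symmetry coherence conditions, all follow directly on representatives, because gluing commutes with disjoint union and with the symmetry diffeomorphisms. The main content, and the only real obstacle, is Proposition~\ref{prop:collar}, which is deferred to Section~\ref{sec:collar-pf}. For it I would take an arbitrary $[M,\Gamma]:\emptyset\to(\Sigma,L)$ and use bulk-admissibility of each component together with Proposition~\ref{prop:cut} to cut $M$ along discs until it becomes a collar of $\Sigma$ with extra handlebody pieces. The difficulty is making sure the cuts can be arranged to meet a projective strand at every stage; the cutting move in Proposition~\ref{prop:cut} only applies when the cutting disc's boundary crosses a projectively coloured strand.
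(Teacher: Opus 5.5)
Your assembly of the theorem is the same as the paper's: functoriality on the quotient, finite-dimensionality from Proposition~\ref{prop:our-V}, injectivity of $\Psi$ from Lemma~\ref{lem:disjoint-union}, and surjectivity of $\Psi$ because collar elements on $\Sigma_1\sqcup\Sigma_2$ split as disjoint unions of admissible collar elements. Your explicit check that $\ourV(\emptyset)\cong\kk$ is correct: it is spanned by $[\emptyset]$ via multiplicativity of $\tau_\cC$, and nonzero via pairing with $(\bD^3,I)$. This fills in a point the paper only covers with ``follows directly from definition''. Taking Proposition~\ref{prop:collar} as given, the argument is complete.

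The problem is your sketched route to Proposition~\ref{prop:collar}, which you rightly call the main content. Cutting along discs cannot in general reduce $M$ to $\bbSigma$ plus pieces disjoint from $\Sigma$. This holds even if you allow those pieces to be arbitrary closed bordisms evaluated by $\tau_\cC$ rather than handlebodies. For example, remove an open ball $B$ from the interior of a solid torus $V$. View $M=V\setminus B$ as a bordism $\emptyset\to(\partial V,\emptyset)$, with free boundary $\partial B\cong\bS^2$ carrying an admissible graph; relative to the collar, $M$ is $\bbSigma$ with a single 2-handle attached. For any disc $D$ with $\partial D\subset\partial B$, $D$ together with one half of $\partial B$ is an embedded sphere in the irreducible manifold $V$. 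So one piece of $\cut_D(M)$ is a ball, and the other is again a punctured solid torus containing $\partial V$. Iterating, you never reach $\partial V\times[0,1]$, so the obstacle you name is not the real one.

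The missing idea is to trade the 2-handles for red loops, i.e.\ for the chromatic map, instead of cutting them. The paper proceeds as follows.
\begin{enumerate}
\item It sets $M_1=\bbSigma\cup\nb((M\setminus\bbSigma)^{(1)})$.
\item It records every 2-cell of $M\setminus\bbSigma$ not in a spanning tree $\spnTree'(M)$ of $\ourGraph'(M)$ as a red belt loop on $\partial M_1$. The 2-cells in the tree carry no loop, which is how the 3-cells are accounted for.
\item It proves $\tau_\cC(N\cup_{\Sigma_1}M,\Phi\cup\Gamma)=\tau_\cC(N\cup_{\Sigma_1}M_1,\Phi\cup\rb(\Gamma_1))$ for every $(N,\Phi)$. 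For this it builds compatible PLCW decompositions and spanning trees on both glued manifolds, and removes the extra red loops along the new 1-cells $X_\pm$ by red capping (Lemma~\ref{lem:hb-cap}).
\item Only after $\rb$ has made everything blue does it cut $M_1$ along the meridian discs of its 1-handles. At that point making each cutting disc meet a projective strand is easy, since by connectedness a projective strand can be isotoped across each meridian.
\end{enumerate}
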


\begin{proof}
We already noted functoriality of $\ourV$ in \eqref{eq:V-is-functor}. That $\ourV$ takes values in finite-dimensional vector spaces was shown in Proposition~\ref{prop:our-V}. 
    
In Lemma~\ref{lem:disjoint-union} we saw that the candidate monoidal structure map $\Psi$ is injective. Next we show that it is surjective. 
By definition, the collar elements in $\ourV(\Sigma\sqcup\Sigma, L \sqcup L')$ are of the form $(\bbSigma\sqcup\bbSigma', \Gamma\sqcup\Gamma')$ where $\bbSigma \cong \Sigma \times [0, 1]$ (resp.\ $\bbSigma' \cong \Sigma'\times [0, 1]$), and $\Gamma$ (resp.\ $\Gamma'$) is a $\cC$-coloured graph contained in the $\bdf\bbSigma$ (resp.\ $\bdf{\bbSigma'}$) ending in $L$ (resp.\ $L'$). In other words, all of the collar elements in $\ourV(\Sigma\sqcup\Sigma', L\sqcup L')$ are contained in the image of $\Psi$. Therefore, by Proposition \ref{prop:collar}, $\Psi$ is surjective.

Finally, the fact that $\Psi$ endows the functor $\ourV$ with a symmetric monoidal structure follows directly from definition, and we are done.
\end{proof}

Proposition~\ref{prop:our-V} gives a concrete description of the state space of the open-closed TFT $\ourV$ via skeins, while in general in the universal construction it is not clear how to parametrise states concretely as one needs to use all bordisms. 

Note that for TFTs starting from the full bordism category, the values of the TFT functor are automatically finite dimensional vector spaces, as the source bordism category is rigid. However, the non-compact bordism category $\ourBord(\cC)$ is not rigid, and so the fact that $\ourV$ is a symmetric monoidal functor does not a priori imply that $\ourV(\Sigma, L)$ is finite-dimensional. We get finite-dimensionality from that of skein modules via Proposition~\ref{prop:our-V}.

\begin{exmp}
As a simple example, consider the solid torus $H$ as a bordism $\emptyset \to \emptyset$ with embedded blue graph $\Gamma_n$ on its boundary given a circle labelled by a projective object $P$ as in \eqref{eq:solid-torus-blue-non-contractible}. There, we also computed the invariant to be
\begin{equation}
\ourV(H, \Gamma_n)
=
\tau_{\cC}(H, \Gamma_n)
=
\cgpt(H, [\Gamma_n])
=
\dim_{\kk}\cC(P, \1) \ ,
\end{equation}
where in the second step we used Proposition~\ref{prop:hdby}. In this case, the disc $D$ with a $P$-marked point on the boundary is actually a rigid object in $\ourBord(\cC)$, and so the invariant above can be computed as a trace of the the identity morphism,
\begin{equation}
\ourV(H, \Gamma_n)
= 
\dim_{\kk} \ourV(D,P)
\end{equation}
By Proposition~\ref{prop:our-V} we have a surjective map $E(D,P): \sk_{\adm}(D,P) \to \ourV(D,P)$, and we also have a surjective map
\begin{equation}
\cC(P, \1) \to \sk_{\adm}(D,P)
~~,\quad f \mapsto 
\includegraphics[valign=c,scale=0.6]{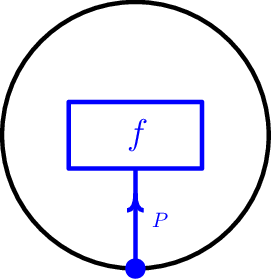}
\end{equation}
Comparing dimensions, we see that all these surjective maps are isomorphisms,
\begin{equation}
\cC(P, \1) \cong \sk_{\adm}(D,P) \cong \ourV(D,P) \ .
\end{equation}
This gives at least one concrete example of a state space of the TFT $\mathcal{V}$, and also illustrates that the theory is non-trivial.
\end{exmp}

\begin{remark}\label{rmk:compare}
In \cite{CGPV23} a non-compact TFT $\EuScript{S}$ is defined on objects and bordisms without free boundary, i.e., on closed bordisms. To a closed surface $\Sigma$, it assigns the admissible skein module, $\mathscr{S}(\Sigma) = \sk_{\adm}(\Sigma,\emptyset)$. The value on bordisms is obtained via the presentation in \cite{Juh18}. Given a closed connected 3-manifold $M$, consider $M \setminus \bB^3$ as a bordism $\bS^2 \to \emptyset$. 
It is shown in \cite[Thm.\,3.5]{CGPV23} that $\mathscr{S}$ is related to $\mathcal{K}_\cC$ (cf.\ Section~\ref{sec:CGPT-compare}) as
\begin{equation}
    \mathscr{S}(M \setminus \bB^3) = \mathcal{K}_\cC(M) \, \cgpt' ~:~ \mathscr{S}(\bS^2) = \sk_{\adm}(\bS^2,\emptyset) \to \kk \ ,
\end{equation}
where $\cgpt'$ was defined in \eqref{eq:sph-inv}. The same relation holds for $\ourV$,
\begin{equation}
    \ourV(M \setminus \bB^3) = \mathcal{K}_\cC(M) \, \cgpt' ~:~ \ourV(\bS^2,\emptyset) \cong \sk_{\adm}(\bS^2,\emptyset) \to \kk \ .
\end{equation}
Indeed, by Proposition~\ref{prop:our-V} we have a surjection $E(\bS^2,\emptyset) : \sk_{\adm}(\bS^2,\emptyset) \to \ourV(\bS^2,\emptyset)$. But $\sk_{\adm}(\bS^2,\emptyset)$ is one-dimensional with basis the class of the graph $I$ described in Section~\ref{sec:CGPT-compare}, and the surjection is not zero (compose with $\bD^3 : \bS^2 \to \emptyset$), hence an isomorphism. Acting with $\ourV(M \setminus \bB^3)$ on the collar element for $\bS^2$ with graph $I$ produces the invariant $\mathscr{I}_{\cC}(M)$ from \eqref{eq:inv-of-M}, which we saw to be equal to $\mathcal{K}_\cC(M)$ in Proposition~\ref{prop:compare-closed}.

More generally, we expect that our TFT $\ourV$ defined in Theorem \ref{thm:ocTFT} agrees with $\mathscr{S}$ on closed surfaces and bordisms. We have not been able to prove this, so this is based on the observation above as well as the surjection $\mathscr{S}(\Sigma) \to \ourV(\Sigma,\emptyset)$ for closed surfaces from Proposition \ref{prop:our-V} (which accordingly we expect to be an isomorphism). 
\end{remark}

\begin{remark}\label{rmk:atfd-2}
As is noted in Remark \ref{rmk:atfd-1}, when $\cC$ is semisimple, the alterfold invariant $Z_{\cC}$ is a scalar multiple of $\tau_{\cC}$. Since both the alterfold TFT \cite{atfd2} and the TFT $\ourV$ in this paper are constructed via the universal construction, and a non-zero constant scalar multiple does not change the radical of the pairing associated to the corresponding invariants, the two open-closed TFTs have isomorphic state spaces and are proportional to each other on bordisms.
More precisely, consider an arbitrary $\cC$-decorated 2-alterfold $(F, \gamma, L)$, with $F$ a closed surface, $\gamma$ a multi-curve on $F$ separating it into alternatively $A$-$B$-coloured regions, and $L$ are $\cC$-labelled points on $\gamma$. Then the state space $\mathbb{V}(F, \gamma, L)$ in the alterfold TFT is isomorphic to $\ourV(\Sigma, L)$ where $\Sigma$ is the $B$-coloured subsurface of $F$ with boundary $\bd\Sigma = \gamma$. We skip the details.
\end{remark}

\subsection{Proof of Proposition \ref{prop:collar}}\label{sec:collar-pf}
Recall that every triangulation is also a PLCW decomposition, and in fact, by \cite[Thm.\,6.3]{Kir12}, every PLCW complex has a subdivision which is a triangulation. 
We will need:

\begin{prop}[{\cite[Cor.\,1]{Arm67}}]\label{prop:ext}
Any triangulation of the boundary of a compact PL-manifold can be extended to a triangulation of the whole manifold.     
\end{prop}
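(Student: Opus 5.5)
The plan is to reduce the statement to two standard facts of PL topology and one elementary subdivision lemma, and to use a collar of $\bd M$ to absorb the mismatch between the given boundary triangulation and one coming from the interior.

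\textbf{Setup.} Let $M$ be a compact PL-manifold and $L$ a triangulation of $\bd M$. Choose any triangulation $K$ of $M$; its restriction $L' := K|_{\bd M}$ is a triangulation of the same polyhedron $\bd M$. Both $L$ and $L'$ are compatible with the PL structure, so they have a common subdivision $L''$; this is the first standard fact I would quote.

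\textbf{Subdivision lemma.} I would then prove the following.

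\emph{If $L_0 \subset K$ is a subcomplex and $L_0''$ is a subdivision of $L_0$, then there is a subdivision $K''$ of $K$ such that}
\begin{itemize}
\item \emph{$K''$ restricts to $L_0''$ on $|L_0|$, and}
\item \emph{every simplex of $K$ disjoint from $|L_0|$ is left unchanged.}
\end{itemize}

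The proof is by induction over the skeleta of $K$. Suppose all simplices of dimension $<k$ have already been subdivided compatibly. Take a $k$-simplex $\sigma \notin L_0$ whose boundary has been subdivided, and replace $\sigma$ by the cone from its barycenter over the subdivided $\bd\sigma$. Simplices whose boundary was untouched are kept as they are.

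\textbf{Collar argument.} Apply the lemma to $L_0 = \bd K$ with subdivision $L''$. This yields a triangulation of $M$ whose boundary triangulation is $L''$; it still remains to pass from $L''$ to $L$.

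By the PL collar theorem (the second standard fact), $M \cong M \cup_{\bd M} (\bd M \times [0,1])$, where the gluing identifies $\bd M \times \{1\}$ with $\bd M$. So it suffices to triangulate the cylinder $\bd M \times [0,1]$ so that it restricts to $L$ on $\bd M\times\{0\}$ and to $L''$ on $\bd M\times\{1\}$.

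To build this triangulation:
\begin{enumerate}
\item Order the vertices of $L$ and take the standard prism triangulation $P$ of $|L|\times[0,1]$. It restricts to $L$ on both ends.
\item Since $L''$ subdivides $L$, apply the subdivision lemma to $P$ with subcomplex $L\times\{1\}$ and its subdivision $L''$.
\item Because $\bd M\times\{0\}$ and $\bd M \times\{1\}$ are disjoint, the simplices of $L\times\{0\}$ are untouched by this step. The result therefore has ends $L$ and $L''$.
\end{enumerate}

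Gluing this cylinder triangulation to the triangulation of $M$ along $L''$ gives a triangulation of $M$ whose boundary is exactly $L$.

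\textbf{Main obstacle.} I expect the real content to sit in the two quoted PL facts: the existence of common subdivisions of two triangulations of the same PL manifold, and the PL collar theorem. Both are classical (Rourke–Sanderson), and I would cite them rather than reprove them.

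The subdivision lemma needs one check: coning a simplex over a subdivision of its boundary gives a genuine simplicial subdivision. This holds because $\sigma$ is the cone over $\bd\sigma$ from its barycenter.
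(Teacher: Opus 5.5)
Your proof is correct. Note that the paper gives no argument here: the statement is quoted from \cite[Cor.\,1]{Arm67} and used as a black box, twice, in the proof of Corollary~\ref{cor:ext} (first for the surface $\bdf M$ with boundary $O$, then for $M$ with boundary $\bd M$).

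Your route is a self-contained proof of exactly this manifold-boundary case. Its only non-elementary inputs are common subdivisions of PL-compatible triangulations and the PL collar theorem. You correctly read ``triangulation'' as PL-compatible, which is the paper's convention since all maps there are PL. Your extension-of-subdivisions lemma is correctly proved by starring the affected simplices not in $L_0$ in order of increasing dimension.

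What the citation buys is brevity. What your argument buys is transparency about where the manifold hypothesis enters, namely only through the collar. That also marks its limit: for a subpolyhedron without a collar neighbourhood, the prism that takes you from $L''$ back to $L$ cannot be inserted. However, the statement and both of the paper's applications concern manifold boundaries, so nothing is lost here.

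Two details deserve a line each in a write-up.
\begin{itemize}
\item Choose the identification $M\cup_{\bd M}(\bd M\times[0,1])\cong M$ to be $(x,0)\mapsto x$ on the outer end; this is easy by reparametrising a PL collar. Then the boundary triangulation is literally $L$, not a PL-isomorphic copy.
\item The union of the two complexes along $L''$ is a genuine simplicial complex, with no two simplices sharing a vertex set. The reason is that in the subdivided prism, every simplex whose vertices all lie on level $1$ lies in $L''\times\{1\}$, since height is linear on each prism simplex.
\end{itemize}
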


We immediately have:
\begin{cor}\label{cor:ext}
Let $\Sigma,\Sigma'$ be compact oriented surfaces with boundary 
(possibly empty), and let $M: \Sigma \to \Sigma'$ be a bordism. Then any triangulation $\de$ of $\bdg M = -\Sigma \cup \Sigma'$ can be extended to a triangulation of $M$.
\end{cor}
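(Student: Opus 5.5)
The plan is to reduce the statement directly to Proposition~\ref{prop:ext}, applied to the compact PL 3-manifold $M$. The only obstruction is that $\bdg M$ is not all of $\bd M$: the boundary also contains the free part $\bdf M$, and the two meet along the corners $\bdg M \cap \bdf M \cong \bd\Sigma \sqcup \bd\Sigma'$. So I would first extend the given triangulation $\de$ of $\bdg M$ to a triangulation of the closed surface $\bd M$, after smoothing the corners, which does not change the PL type. Then I would apply Proposition~\ref{prop:ext}.

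For the surface step, note that $\de$ restricts to a triangulation of the 1-manifold $\bdg M \cap \bdf M$, a disjoint union of circles. This gives a triangulation of $\bd(\bdf M)$. I would extend it to a triangulation of the compact surface $\bdf M$. One way is to take any triangulation of $\bdf M$, subdivide it so that it is compatible near the boundary, and insert a collar $\bd(\bdf M) \times [0,1]$ that interpolates between the two boundary triangulations. Concretely, triangulate the annulus between two triangulations of a circle by joining vertices. Alternatively, this is just the 2-dimensional case of Proposition~\ref{prop:ext} applied to the surface $\bdf M$. Gluing along the common circles gives a triangulation of $\bd M = \bdg M \cup \bdf M$ that restricts to $\de$ on $\bdg M$.

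Applying Proposition~\ref{prop:ext} to $M$ with this boundary triangulation then yields a triangulation of $M$ extending $\de$. The empty cases, $\Sigma$ or $\Sigma'$ empty or $\bdf M$ empty, are covered trivially.

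The main point requiring care is the compatibility at the corners. We must check that the union of the two triangulations is an honest simplicial complex on $\bd M$, and that the PL structure of $M$ with corners agrees with that of $M$ with its corners smoothed. I expect both to be routine, using the collar conventions fixed at the start of Section~\ref{sec:ocTFT}.
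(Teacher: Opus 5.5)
Your proposal is correct and is essentially the paper's proof. The paper restricts $\de$ to the corner circles $O=\bd(\bdf M)$, extends this to a triangulation of $\bdf M$ using the 2-dimensional case of Proposition~\ref{prop:ext}, glues the two pieces to get a triangulation of $\bd M$, and applies Proposition~\ref{prop:ext} again to $M$; your alternative collar-interpolation argument and your remarks on corner smoothing are extra care that the paper leaves implicit.
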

\begin{proof}
Let $O := \bdf M \cap \bdg M$, then $O = \bd(\bdf M)$, and $\de$ restricts to a triangulation $\de|_O$ of $O$. So by Proposition \ref{prop:ext}, $\de|_O$ extends to a triangulation $\vphi$ on $\bdf M$. Moreover, $\de \cup_O \vphi$ is a triangulation of $\bd M = \bdf M \cup \bdg M$. Finally, applying Proposition \ref{prop:ext} again, we obtain a triangulation of $M$ extending $\de$.
\end{proof}

Now we are ready to give the proof of Proposition \ref{prop:collar}.

\begin{proof}[Proof of Proposition \ref{prop:collar}]
It suffices to show that for any $[M, \Gamma] \in \Hom(\emptyset, (\Sigma, L))$, there is a collar element $u \in \tbv(\Sigma, L)$ such that $[M, \Gamma]-u \in \rad(\pair{\cdot, \cdot})$. By Lemma \ref{lem:disjoint-union}, we may assume that $M$ is connected (while $\Sigma$ is not necessarily connected). 

To construct the collar element $u$, we first set up some notations. By the existence of collars, $\bbSigma \cong \Sigma \times [0,1]$ embeds in $M$, so that $\Sigma_0 := \Sigma\times\{0\}$ is attached to $M \setminus \bbSigma$. The gluing boundary $\bdg M$ of $M$ is $\Sigma_{1} := \Sigma\times\{1\}\subset \bbSigma\subset M$, and to emphasise this, we write $(N, \Phi) \cup_{(\Sigma_1, L)} (M, \Gamma)$ when we glue $(M, \Gamma)$ to another bordism $(N, \Phi)$.

Let $\de_M$ be a triangulation of $M$ which restricts to  triangulations of $\bbSigma$, $\Sigma_0$ and $\Sigma_1$, which are denoted by $\de_{\smbbSigma}$, $\de_{\Sigma_{0}}$ and $\de_{\Sigma_{1}}$ respectively. Define 
\[M_1 := \bbSigma \cup \nb\big((M\setminus\bbSigma)_{\de}^{(1)}\big)\,,\]
i.e., $M_1$ is the union of $\bbSigma$ with the tubular neighbourhood of the 1-skeleton of $M\setminus\bbSigma$.
Let $Q_M$ be a set of interior points of 2-cells in $(\bd M) \setminus \Sigma_{1} = \intr(\bdf M)$.

Let $\ourGraph'(M)$ be the graph obtained from $\ourGraph_{\de_M}(M)$ by deleting all of its edges corresponding to the 2-cells in $\de_M$ which are contained in $\Sigma_1$, i.e., 
\begin{equation}\label{eq:G'}
\ourGraph'(M) := \ourGraph_{\de_M}(M) \setminus (\de_{\Sigma_{1}})_{2} \ ,
\end{equation}
where $(\de_{\Sigma_{1}})_{2}$ stands for the set of 2-cells in $\de_{\Sigma_{1}}$. 
Fix a choice of a spanning tree $\spnTree'(M)$ of $\ourGraph'(M)$. Similar to what we did in Definition \ref{def:hb-of-M}, deform $\Gamma$ according to the interior points $Q_M$ chosen above, so that $\Gamma$ is contained in $\bd M_1$. Then we add red loops along the belts of 2-cells in $\de_{M}\setminus \de_{\smbbSigma}$ which correspond to edges in $\ourGraph'(M)\setminus \spnTree'(M)$. Denote the resulting bichrome graph in $M_1$ by $\Gamma_1$ (recall from Remark \ref{rmk:cycle-switch} that we denote the set of edges of $\spnTree'(M)$ also by $\spnTree'(M)$):
\begin{equation}\label{eq:gamma-1}
\Gamma_1 = \Gamma \cup \bigcup_{\twoCell{W} \in S(M\setminus\smbbSigma)} \gamma_{\belt(\twoCell{W})}\,, \quad\text{with $S(M\setminus\bbSigma) = (\de_{M\setminus \smbbSigma})_2 \setminus \spnTree'(M)$}\,,
\end{equation}
where $\gamma_{\belt(\twoCell{W})}$ is the red loop along the belt of the 2-cell $\twoCell{W}$ with respect to $Q_M$
(suppressed for simplicity), see \eqref{eq:belt}.

Applying the red-to-blue map $\operatorname{RB}$ to all the red loops in $\Gamma_1$, we obtain a bulk-admissible
graph $\rb(\Gamma_1)$ in $M_1$. Let $D(M_1\setminus\bbSigma)$ be the collection of meridian disks of all the 1-handles (thickened 1-cells) in $M_1\setminus\bbSigma$. By connectedness, we can isotope $\rb(\Gamma_1)$ so that the boundaries of the disks in $D(M_1\setminus\bbSigma)$ have non-empty intersection with edges in $\rb(\Gamma_1)$ that are coloured by projective objects. Applying the cutting move \eqref{eq:tau-cut-0} to $(M_1, \rb(\Gamma_1))$ along each of the disks in $D(M_1\setminus\bbSigma)$, we will obtain the disjoint union of $\bbSigma$ with finitely many 3-balls $\bD^3$, each containing admissible graphs in them. Evaluating the 3-balls with the admissible graphs by $\cgpt$, we obtain a linear combination of admissible graphs in $\bbSigma$, which by definition is a collar element. Denote it by $u$, then with an abuse of notation, we write 
\[u = \cut_{D(M_1\setminus\smbbSigma)}(M_1, \rb(\Gamma_1))\,.\]

We show that $u$ is the desired collar element by showing that for any $[N, \Phi] \in \tbv'(\Sigma, L)$, $\tau_{\cC}((N, \Phi)\cup_{(\Sigma_1, L)}(M, \Gamma))$ is equal to $\tau_{\cC}((N, \Phi) \cup_{(\Sigma_1, L)} u)$. Note that by construction and Proposition \ref{prop:cut}, we have 
\begin{equation}\label{eq:u-M1}
\tau_{\cC}((N, \Phi) \cup_{(\Sigma_1, L)} u) = \tau_{\cC}(N\cup_{\Sigma_1}M_1, \Phi\cup_{L}\rb(\Gamma_1))\,,
\end{equation}
so it remains to show that $\tau_{\cC}(N\cup_{\Sigma_1}M, \Phi\cup_L \Gamma)$ is equal to $\tau_{\cC}(N\cup_{\Sigma_1}M_1, \Phi\cup_{L} \rb(\Gamma_1))$.

Start with the triangulation $\de_M$ that we picked to construct $u$, and note that $\de_{M}$ restricts to a triangulation $\de_{\Sigma_1}$. Then by Corollary \ref{cor:ext}, we can extend $\de_{\Sigma_1}$ to a triangulation 
$\de_N$ on $N$. Let 
\[\tilde{\de} := \de_N \cup_{\de_{\Sigma_1}} \de_M\]
be the triangulation on $N \cup_{\Sigma} M$ that restricts to $\de_M$ on $M$ and $\de_N$ on $N$.

The closure of the 1-skeleton of $M \setminus\bbSigma$, denoted by $\clsr(M\setminus\bbSigma)^{(1)}_{\de}$, is the union of $(M\setminus\bbSigma)^{(1)}_{\de}$ with all the 0-cells in $\Sigma_0$ that are boundaries of 1-cells in $M \setminus\bbSigma$. 
It is a 1-dimensional PLCW complex. 
By the connectedness of $M$, we can iteratively contract 1-cells in $M\setminus\bbSigma$ which have one boundary 0-cell in $\Sigma_0$ and the other one in $M\setminus\bbSigma$ until all the 0-cells of $\clsr(M\setminus\bbSigma)^{(1)}_{\de}$ are contained in $\Sigma_0$.
For example:
\begin{equation}
\includegraphics[valign=c,scale=0.6]{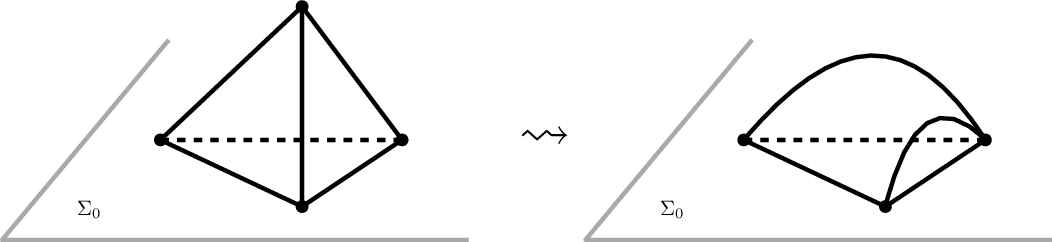}
\end{equation}
In this way, we obtain a new 1-dimensional PLCW complex $Y$ such that the set of 0-cells $Y_0\subset(\de_{\Sigma_0})_0$. Moreover, we have 
    \[\bbSigma \cup \nb((M\setminus\bbSigma)^{(1)}_\de) = M_1 \cong \bbSigma\cup\nb(Y)\,.\]
In other words, $M_1$ is homeomorphic to the 3-manifold obtained from attaching 1-handles to $\Sigma_0 \subset \bbSigma$.

We can now choose a PLCW decomposition $\vphi$ of $M_1$ as follows: For $\bbSigma$, we still choose $\de_{\smbbSigma}$. For any 1-handle in $\nb(Y^{(1)})$ attached to $\bbSigma$ along a 1-cell $\oneCell{X}$, we pick a PLCW decomposition of it according to whether $\oneCell{X}$ has one or two boundary 0-cells, which is described as follows.

Firstly, suppose $\oneCell{X}$ has two 0-cells, which are denoted by $\zeroCell{Z}_+$ and $\zeroCell{Z}_-$ respectively. Pick a 2-cell $\twoCell{V}_+$ in $\bbSigma$ containing $\zeroCell{Z}_+$, add a 1-cell loop $\oneCell{X}_+$ around $\zeroCell{Z}_+$, we obtain a new 2-cell $\twoCell{W}_+$ bounded by $\oneCell{X}_+$ inside $\twoCell{V}_+$, and the rest of $\twoCell{V}_+$ is again a 2-cell, which, by an abuse of notation, is still denoted by $\twoCell{V}_+$. Do the same for $\zeroCell{Z}_{-}$. Attach another 2-cell $\twoCell{W}$ along $\oneCell{X}_+$, $\oneCell{X}_-$ and $\oneCell{X}$, and finally add a 3-cell $\threeCell{A}$ that is bounded by $\twoCell{W}_+$, $\twoCell{W}_-$ and $\twoCell{W}$, we obtain a PLCW decomposition of the 1-handle attached to $\bbSigma$. The above construction is depicted by:
\begin{equation}\label{eq:collar-2} 
\includegraphics[valign=c,scale=0.6]{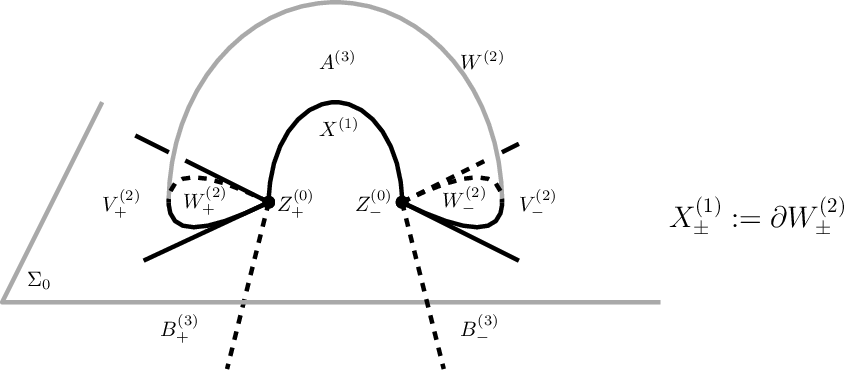}
\end{equation}
Suppose $\twoCell{V}_{\pm}$ are boundary 2-cells of 3-cells $\threeCell{B}_{\pm}$ in $\de_{\smbbSigma}$ respectively, then attaching the 1-handle with the above PLCW decomposition to $\bbSigma$ results in adding the following local graph to $\ourGraph_{\de_{\smbbSigma}}(\bbSigma)$, where the orange edges are the 2-cells in $\vphi$ but not in $\de_{\smbbSigma}$:
\begin{equation}
\includegraphics[valign=c,scale=0.7]{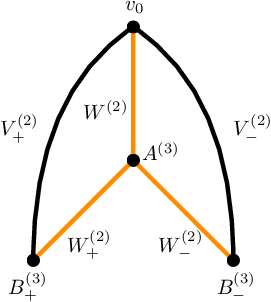}
\end{equation}
If $\oneCell{X}$ has only one 0-cell $\zeroCell{Z}$, we choose a two-cell $\twoCell{V}$ in $\bbSigma$ containing $\zeroCell{Z}$ and add two 1-cell loops around $\zeroCell{Z}$, then the rest of the construction is similar to the above:
\begin{equation}\label{eq:collar-6}
\includegraphics[valign=c,scale=0.6]{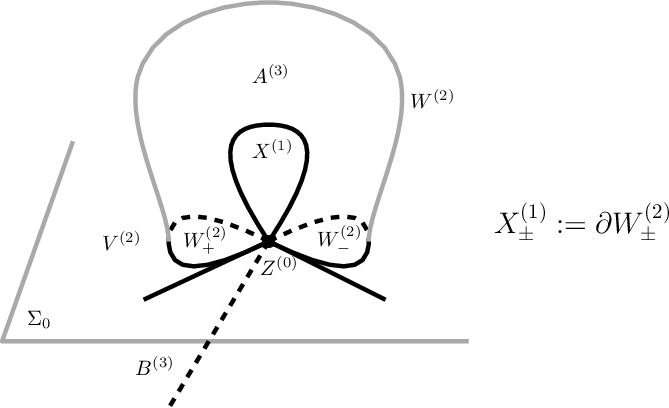}
\end{equation}
In this case, attaching the 1-handle with the above PLCW decomposition results in adding the following local graph to $\ourGraph_{\de_{\smbbSigma}}(\bbSigma)$:
\begin{equation}
\includegraphics[valign=c,scale=0.7]{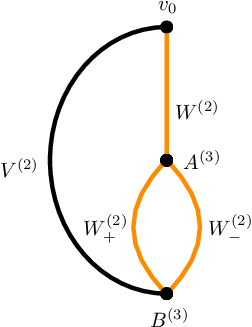}
\end{equation}
By the above construction, it is easy to see that the 1-skeleton $(M_1)_{\vphi}^{(1)}$ of $M_1$ can be obtained by attaching two 1-cells $\oneCell{X}_{\pm} (= \bd \twoCell{W}_{\pm})$ to $\oneCell{M}_{\de}$ for each 1-cell $\oneCell{X}$ in $Y$, i.e., we have 
\begin{equation}\label{eq:1cell-M1}
\oneCell{(M_{1})}_{\vphi} \cong \oneCell{M}_{\de} \cup  \bigcup_{\oneCell{X} \in Y} 
(\oneCell{X}_{+} \cup \oneCell{X}_{-})    
\end{equation}
and the set of 2-cells in $M_1$ is given by 
\begin{equation}\label{eq:2cell-M1}
\vphi_2 = (\de_{\smbbSigma})_2 \cup \bigcup_{\oneCell{X} \in Y} 
\Big(\twoCell{W}(\oneCell{X}) \cup \twoCell{W}_+(\oneCell{X}) \cup \twoCell{W}_-(\oneCell{X})\Big) \ .
\end{equation}

Let $\ourGraph'(M_1)$ be the graph (with half-edges) obtained from deleting the half-edges in $\ourGraph_{\vphi}(M_1)$ corresponding to the 2-cells in $\Sigma_1$ from the boundary vertex side. Define $\ourGraph'(\bbSigma)$ and $\ourGraph'(N)$ similarly (with respect to $\de_{\smbbSigma}$ and $\de_N$). Then by the discussions above, we have $\ourGraph'(M_1) \supset \ourGraph'(\bbSigma)\subset \ourGraph'(M)$ (see \eqref{eq:G'}), depicted as follows:
\begin{equation}\label{eq:collar-8}
\begin{tikzcd}
\includegraphics[valign=c,scale=0.55]{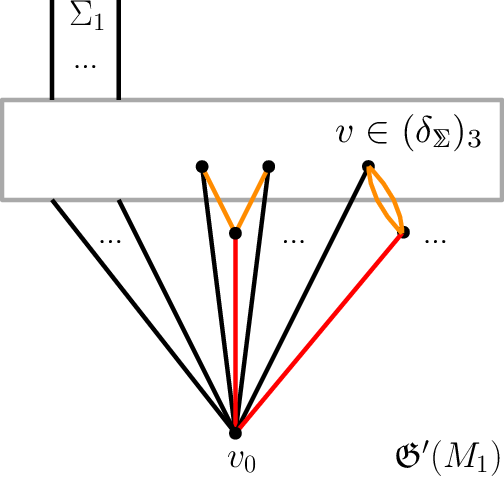} & & \includegraphics[valign=c,scale=0.55]{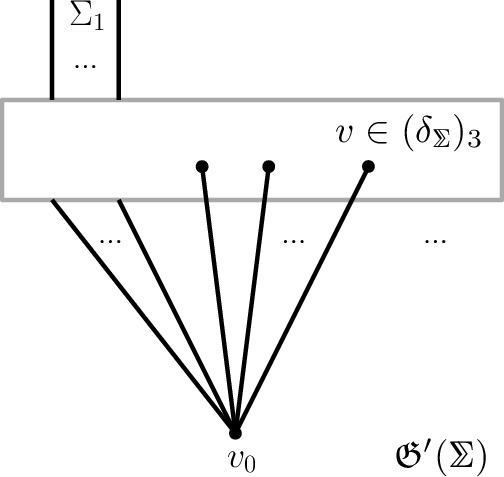}\ar[ll, hook']\ar[d, hook]\\
&&\includegraphics[valign=c,scale=0.55]{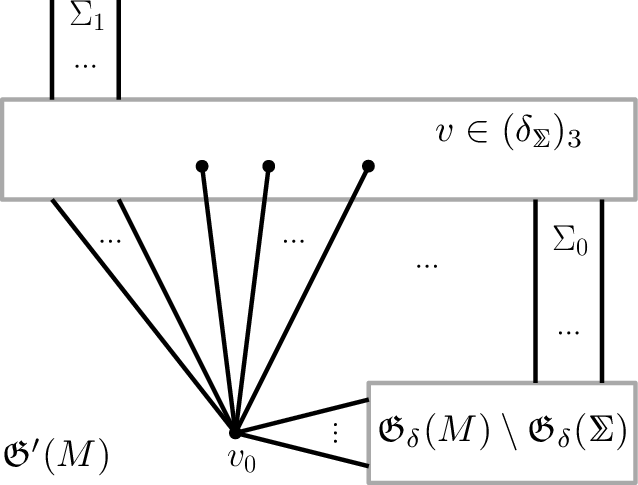}
\end{tikzcd}
\end{equation}
Here, the gray boxes labelled by $v \in (\de_{\smbbSigma})_3$ represent the part of the graphs whose vertices are contained in $(\de_{\smbbSigma})_3$, i.e., those correspond to 3-cells in $\bbSigma$.

Since $\vphi$ and $\de_N$ are identical on $\Sigma\times\{1\}$, by gluing them together along $\Sigma_1$, we obtain a PLCW decomposition 
\[\tilde{\vphi} := \vphi \cup_{\de_{\Sigma_1}} \de_N\]
of $N \cup_{\Sigma_{1}} M_1$ that restricts to $\vphi$ on $M_1$ and $\de_N$ on $N$. The corresponding (2,3)-graph $\ourGraph_{\tilde\vphi}(N\cup_{\Sigma_{1}}M_1)$ can be obtained from connecting $\ourGraph'(M_1)$ and $\ourGraph'(N)$ along the half-edges corresponding to 2-cells in $\Sigma_1$ and at the boundary vertex, and $\ourGraph_{\de}(N\cup_{\Sigma}M)$ can be obtained similarly. This is illustrated below:
\begin{equation}
\includegraphics[valign=c,scale=0.5]{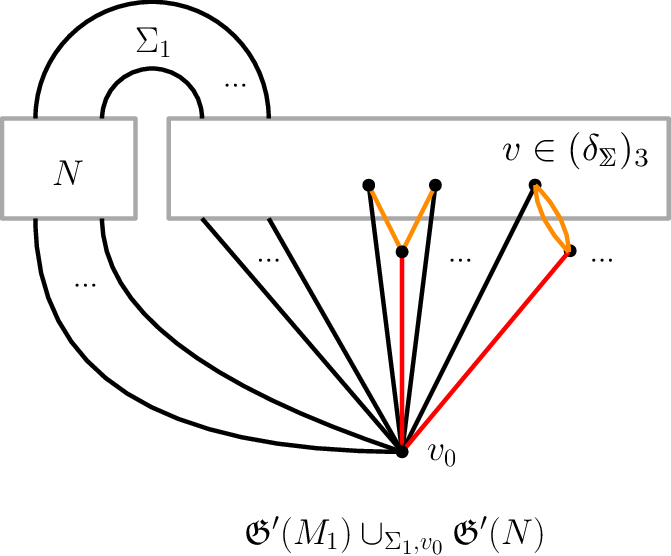}
\quad
\includegraphics[valign=c,scale=0.5]{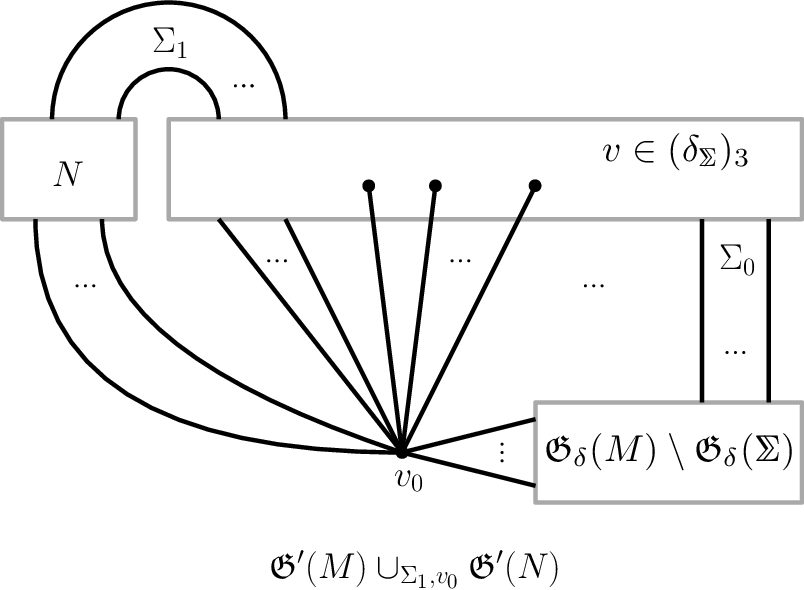}
\end{equation}

Recall that to define the bichrome graph $\Gamma_1$ in \eqref{eq:gamma-1}, we chose a spanning tree $\spnTree'(M)$ of $\ourGraph'(M)$ (ignoring half-edges). Let $\spnTree'(\bbSigma) := \spnTree'(M) \cap \ourGraph'(\bbSigma)$, then by the above construction, in particular, by \eqref{eq:2cell-M1}, the subgraph 
\begin{equation}\label{eq:T-M1}
\spnTree'(M_1) := \spnTree'(\bbSigma) \cup 
\bigcup_{\oneCell{X} \in Y} 
\twoCell{W}(\oneCell{X})
\end{equation}
of $\ourGraph'(M_1)$ is a spanning tree of $\ourGraph'(M_1)$. Indeed, the set of vertices that are not in $\spnTree'(\bbSigma)$ is in one-to-one correspondence with 1-cells in $Y$ (the 3-cells of the tubular neighbourhood of these 1-cells), and by adding 
$\bigcup_{\oneCell{X} \in Y} 
\twoCell{W}(\oneCell{X})$ to $\spnTree'(\bbSigma)$, we cover all the vertices in $\ourGraph'(M_1)$. 
Then, by Remark \ref{rmk:cycle-switch} (1), $\spnTree'(M_1)$ is a tree.
The edges that are added to $\spnTree'(\bbSigma$) are coloured in red on the left hand side of \eqref{eq:collar-8}.

By definition, $\ourGraph_{\de_N}(N)$ is of the following form: 
\begin{equation}\includegraphics[valign=c,scale=0.6]{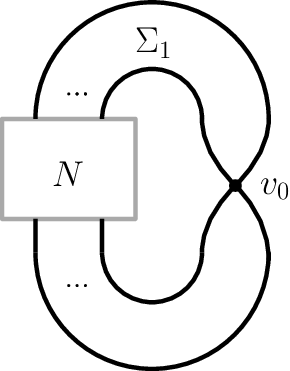}\end{equation}
Pick a spanning tree $\spnTree(N)$ of $\ourGraph_{\de_N}(N)$, then 
\[\spnTree(N\cup_{\Sigma_{1}}M) := \spnTree(N)\cup_{\bd} \spnTree'(M)\]
is a spanning tree of $\ourGraph_{\tilde\de}(N\cup_{\Sigma_{1}}M)$, where we identify edges in $\ourGraph_{\de_N}(N)$ associated to 2-cells in $\Sigma_1$ with the corresponding edges in $\ourGraph_{\tilde\de}(N \cup_{\Sigma_1}M)$. 
Indeed, the right hand side of the above equation contains all the vertices in $\ourGraph_{\tilde\de}(M \cup_{\Sigma_{1}}N)$, and by computing the Euler characteristic, it is easy to see that $\spnTree(M\cup_{\Sigma_{1}}N)$ is a tree. 
Similarly, define 
\[\spnTree(N\cup_{\Sigma_{1}}M_1) := \spnTree(N)\cup_{\bd} \spnTree'(M_1)\,.\]
Recall from \eqref{eq:gamma-1} that we denote $S(M\setminus\bbSigma) = (\de_{M\setminus\smbbSigma})_2 \setminus \spnTree'(M)$. 
Similarly define $S(\bbSigma) := (\de_{\smbbSigma})_2 \setminus \spnTree'(\bbSigma)$ and $S(N) := \de_N(N)_2 \setminus \spnTree(N)$, we have
\[S(N\cup_{\Sigma_1}M) := \tilde\de_2 \setminus \spnTree(M\cup_{\Sigma_1}N) = S(\bbSigma) \cup S(M\setminus\bbSigma) \cup S(N)\,.\]
Pick a set of interior points of 2-cells in $\bdf N$, then $Q_N\cup Q_M$ is a set of interior points of $\bdf(N\cup_{\Sigma_1}M)$, with which we can deform the graph $\Phi\cup_L\Gamma$ to the boundary of $\nb((N \cup_{\Sigma_1}M)_{\tilde\de}^{(1)})$, and we have 
\[\begin{split}
&\tau_{\cC}(N\cup_{\Sigma_1}M, \Phi\cup_{L}\Gamma)\\
&= 
\cgpt^{\bi}\Big(\nb((N \cup_{\Sigma_1}M)_{\tilde\de}^{(1)}) \,,\, (\Phi\cup_L \Gamma) \cup\bigcup_{\twoCell{W} \in S(N\cup_{\Sigma_1}M)}\gamma_{\belt(\twoCell{W})}\Big)\,.
\end{split}
\]
On the other hand, 
\eqref{eq:T-M1} says that the edges corresponding to the 2-cells $\twoCell{W}_{\pm}(\oneCell{X})$ for $\oneCell{X} \in Y$ are not included in $\spnTree'(M_1)$.
So by \eqref{eq:2cell-M1}, we have 
\[
S(N\cup_{\Sigma_1} M_1) := \tilde\vphi_2
\setminus \spnTree(N\cup_{\Sigma_1}M_1)= S(\bbSigma) \cup S(N) \cup S'\,,
\]
where 
$S' := \bigcup_{\oneCell{X} \in Y} 
\{\twoCell{W}_+(\oneCell{X}), \twoCell{W}_-(\oneCell{X})\}$.

Pick a set of interior points $Q_{M_{1}}$ of 2-cells in $\bdf M_1$, then deform $\Phi\cup_L \rb(\Gamma_1)$ according to $Q_{M_1} \cup Q_{N}$ so that it is contained in $\nb((N \cup_{\Sigma_1} M_{1})_{\tilde\vphi}^{(1)})$. Then we have
\[\begin{split}
&\tau_{\cC}(N\cup_{\Sigma_1}M_1, \Phi\cup_L \rb(\Gamma_1))\\ 
&= \cgpt^{\bi}\Big(\nb((N \cup_{\Sigma_1}M_1)_{\tilde\vphi}^{(1)}), (\Phi\cup_L\Gamma_1)\cup\bigcup_{\twoCell{W} \in S(N\cup_{\Sigma_1}M_1)} \gamma_{\belt_{\twoCell{W}}}\Big)\\
&= \cgpt^{\bi}\Big(\nb((N \cup_{\Sigma_1}M_1)_{\tilde\vphi}^{(1)}), (\Phi\cup_L\Gamma)\cup\bigcup_{\twoCell{W} \in S(N\cup_{\Sigma_1}M)} \gamma_{\belt({\twoCell{W}})}\cup\bigcup_{\twoCell{W} \in S'} \gamma_{\belt({\twoCell{W}})}\Big)
\end{split}\]
By \eqref{eq:1cell-M1}, 
\[\nb((N \cup_{\Sigma_1} M_1)_{\tilde\vphi}^{(1)}) = \nb(N \cup_{\Sigma_{1}}M)_{\tilde\de}^{(1)} \cup 
\bigcup_{\oneCell{X} \in Y} 
(\nb(\oneCell{X}_+) \cup \nb(\oneCell{X}_-))\]
and $\bigcup_{\twoCell{W} \in S'} \gamma_{\belt({\twoCell{W}})}$ are exactly the red loops along the longitudes of the tubes 
$\bigcup_{\oneCell{X} \in Y} 
(\nb(\oneCell{X}_+) \cup \nb(\oneCell{X}_-))$. Hence, we have 
\[\begin{split}
&\Cap_{S'}\Big(\nb((N \cup_{\Sigma_1}M_1)_{\tilde\vphi}^{(1)}), (\Phi\cup_L\Gamma)\cup\bigcup_{\twoCell{W} \in S(N\cup_{\Sigma_1}M)} \gamma_{\belt({\twoCell{W}})}\cup\bigcup_{\twoCell{W} \in S'} \gamma_{\belt({\twoCell{W}})}\Big) \\
&= \Big(\nb((N \cup_{\Sigma_1}M)_{\tilde\de}^{(1)}), (\Phi\cup_L \Gamma) \cup\bigcup_{\twoCell{W} \in S(N\cup_{\Sigma_1}M)}\gamma_{\belt(\twoCell{W})}\Big)
\end{split}\]
where $\Cap_{S'}$ stands for capping all the red loops $\gamma_{\belt(\twoCell{W})}$ for $\twoCell{W} \in S'$. Finally, by Lemma \ref{lem:hb-cap}, we have 
\[\begin{split}
&\tau_{\cC}(N\cup_{\Sigma_1}M_1, \Phi\cup_L\rb(\Gamma_1))\\    
&= \cgpt^{\bi}\Big(\nb((N \cup_{\Sigma_1}M)_{\tilde\de}^{(1)}), (\Phi\cup_L \Gamma) \cup\bigcup_{\twoCell{W} \in S(N\cup_{\Sigma_1}M)}\gamma_{\belt(\twoCell{W})}\Big)\\
&= \tau_{\cC}(N\cup_{\Sigma_1}M, \Phi\cup_{L}\Gamma)\,,
\end{split}\]
and we are done.
\end{proof}

\end{document}